\newdimen\unit\newdimen\psep\newcount\nd\newcount\ndx\newbox\dotb\newbox\ptbox
\newdimen\dx\newdimen\dy\newdimen\dxx\newdimen\dyy\newdimen\hgt
\newdimen\xoff\newdimen\yoff
\newcommand\clap[1]{\hbox to 0pt{\hss{#1}\hss}}
\newcommand\vdisk[1]{{\font\dotf=cmr10 scaled #1\dotf.}}
\newcommand\varline[2]{\setbox\dotb\hbox{\vdisk{#1}}\xoff=-.5\wd\dotb
\wd\dotb=0pt\yoff=-.5\ht\dotb\psep=#2\ht\dotb}
\newcommand\varpt[1]{\setbox\ptbox\clap{\vdisk{#1}}\setbox\ptbox
\hbox{\raise-.5\ht\ptbox\box\ptbox}}
\newcommand\cpt{\copy\ptbox}
\newcommand\point[3]{\rlap{\kern#1\unit\raise#2\unit\hbox{#3}}}
\newcommand\setnd[4]{\dx=#3\unit\advance\dx-#1\unit\divide\dx by\psep
\dy=#4\unit\advance\dy-#2\unit\divide\dy by\psep \multiply\dx
by\dx\multiply\dy by\dy\advance\dx\dy\nd=1\advance\dx-1sp
\loop\ifnum\dx>0\advance\dx-\nd sp\advance\nd1\advance\dx-\nd
sp\repeat}
\newcommand\dl[4]{{\setnd{#1}{#2}{#3}{#4}\dline{#1}{#2}{#3}{#4}\nd}}
\newcommand\dline[5]{{\nd=#5\hgt=#2\unit\dx=#3\unit\advance\dx-#1\unit
\divide\dx by\nd\dy=#4\unit\advance\dy-#2\unit\divide\dy by\nd
\advance\hgt\yoff\rlap{\kern#1\unit\kern\xoff\loop\ifnum\nd>1\advance\nd-1
\advance\hgt\dy\kern\dx\raise\hgt\copy\dotb\repeat}}}
\newcommand\ellipse[4]{\qellip{#1}{#2}{#3}{#4}\qellip{#1}{#2}{#3}{-#4}%
\qellip{#1}{#2}{-#3}{#4}\qellip{#1}{#2}{-#3}{-#4}}
\newcommand\qellip[4]{{\setnd{0}{0}{#3}{#4}\dx=\unit\dy=0pt\raise\yoff\rlap{%
\kern#1\unit\kern\xoff\raise#2\unit\hbox{\loop\ifnum\dx>0\rlap{\kern#3\dx
\raise#4\dy\copy\dotb}\hgt=\dx\divide\hgt
by\nd\advance\dy\hgt\hgt=\dy \divide\hgt
by\nd\advance\dx-\hgt\repeat\rlap{\raise#4\dy\copy\dotb}}}}}
\newcommand\bez[6]{{\setnd{#1}{#2}{#3}{#4}\ndx=\nd\setnd{#3}{#4}{#5}{#6}
\ifnum\ndx>\nd\nd=\ndx\fi\dx=#3\unit\advance\dx-#1\unit\dy=#4\unit
\advance\dy-#2\unit\dxx=#5\unit\advance\dxx-#1\unit\dyy=#6\unit\advance
\dyy-#2\unit\advance\dxx-2\dx\advance\dyy-2\dy\divide\dxx
by\nd\divide\dyy
by\nd\advance\dx.25\dxx\advance\dy.25\dyy\divide\dx
by\nd\divide\dy by\nd \multiply\nd
by2\dx=100\dx\dy=100\dy\dxx=100\dxx\dyy=100\dyy\divide\dxx by\nd
\divide\dyy
by\nd\hgt=#2\unit\raise\yoff\rlap{\kern#1\unit\kern\xoff
\raise\hgt\copy\dotb\loop\ifnum\nd>0\advance\nd-1\advance\hgt0.01\dy
\kern0.01\dx\raise\hgt\copy\dotb\advance\dx\dxx\advance\dy\dyy\repeat}}}
\newcommand\ptu[3]{\point{#1}{#2}{\cpt\raise1ex\clap{$\scriptstyle{#3}$}}}
\newcommand\ptd[3]{\point{#1}{#2}{\cpt\raise-1.8ex\clap{$\scriptstyle{#3}$}}}
\newcommand\ptr[3]{\point{#1}{#2}{\cpt\raise-.4ex\rlap{$\ \scriptstyle{#3}$}}}
\newcommand\ptl[3]{\point{#1}{#2}{\cpt\raise-.4ex\llap{$\scriptstyle{#3}\ $}}}
\newcommand\ptlu[3]{\point{#1}{#2}{\raise.8ex\clap{$\scriptstyle{#3}$}}}
\newcommand\ptld[3]{\point{#1}{#2}{\raise-1.6ex\clap{$\scriptstyle{#3}$}}}
\newcommand\ptlr[3]{\point{#1}{#2}{\raise-.4ex\rlap{$\,\scriptstyle{#3}$}}}
\newcommand\ptll[3]{\point{#1}{#2}{\raise-.4ex\llap{$\scriptstyle{#3}\,$}}}
\newcommand\pt[2]{\point{#1}{#2}{\cpt}}
\newcommand\medline{\varline{800}{.5}}
\newcommand\thnline{\varline{400}{.6}}
\newtheorem{thm}{Theorem}
\newtheorem{prob}{Problem}
\newtheorem{lemma}[thm]{Lemma}
\newtheorem{prop}[thm]{Proposition}
\newtheorem{obs}[thm]{Observation}
\theoremstyle{definition}
\theoremstyle{definition}\newtheorem{defn}{Definition}
\newtheorem*{cliquealg}{The Clique Process}
\newtheorem*{WSA}{The Witness-Set Algorithm}
\newtheorem*{REA}{The Red Edge Algorithm}
\newcommand{\ds}{\displaystyle}
\def\A{\mathcal{A}}
\def\G{\mathcal{G}}
\def\J{\mathcal{J}}
\def\K{\mathcal{K}}
\def\P{\mathcal{P}}
\def\T{\mathcal{T}}
\def\Ex{\mathbb{E}}
\def\N{\mathbb{N}}
\def\Pr{\mathbb{P}}
\def\ZZ{\mathbb{Z}}
\newcommand{\1}{\mathbf{1}}
\def\le{\leqslant}
\def\ge{\geqslant}
\def\eps{\varepsilon}
\def\Var{\textup{Var}}
\def\root{\textup{root}}
\def\<{\langle}
\def\>{\rangle}
\begin{document}
\title{Graph bootstrap percolation}

\author{J\'ozsef Balogh}
\address{Department of Mathematics\\ University of Illinois\\ 1409 W. Green Street\\ Urbana, IL 61801\\ and\\ Department of Mathematics\\ University of California\\ San Diego, La Jolla, CA 92093}\email{jobal@math.uiuc.edu}

\author{B\'ela Bollob\'as}
\address{Trinity College\\ Cambridge CB2 1TQ\\ England\\ and \\ Department of Mathematical Sciences\\ The University of Memphis\\ Memphis, TN 38152, USA} \email{B.Bollobas@dpmms.cam.ac.uk}

\author{Robert Morris}
\address{IMPA, Estrada Dona Castorina 110, Jardim Bot\^anico, Rio de Janeiro, RJ, Brasil} \email{rob@impa.br}
\thanks{Research supported in part by: (JB) NSF CAREER Grant DMS-0745185, UIUC Campus Research Board Grants 09072 and 11067, OTKA Grant K76099, and the TAMOP-4.2.1/B-09/1/KONV-2010-0005 project; (BB)  NSF grants DMS-0906634, CNS-0721983 and CCF-0728928, ARO grant W911NF-06-1-0076, and TAMOP-4.2.2/08/1/2008-0008 program of the Hungarian Development Agency; (RM) CNPq bolsa de Produtividade em Pesquisa}

\begin{abstract}
Graph bootstrap percolation is a deterministic cellular automaton which was introduced by Bollob\'as in 1968, and is defined as follows. Given a graph $H$, and a set $G \subset E(K_n)$ of initially `infected' edges, we infect, at each time step, a new edge $e$ if there is a copy of $H$ in $K_n$ such that $e$ is the only not-yet infected edge of $H$. We say that $G$ percolates in the $H$-bootstrap process if eventually every edge of $K_n$ is infected. The extremal questions for this model, when $H$ is the complete graph $K_r$, were solved (independently) by Alon, Kalai and Frankl almost thirty years ago. In this paper we study the random questions, and determine the critical probability $p_c(n,K_r)$ for the $K_r$-process up to a poly-logarithmic factor. In the case $r = 4$ we prove a stronger result, and determine the threshold for $p_c(n,K_4)$.
\end{abstract}

\maketitle

\section{Introduction}\label{intro}

Cellular automata, which were introduced by von Neumann (see~\cite{vN}) after a suggestion of Ulam~\cite{Ulam}, are dynamical systems (defined on a graph $G$) whose update rule is homogeneous and local. We shall study a particular cellular automaton, called $H$-bootstrap percolation, which was introduced over 40 years ago by Bollob\'as~\cite{Bela68}. This model is a substantial generalization of $r$-neighbour bootstrap percolation (see below), an extensively studied model related to statistical physics. We shall determine the critical probability for $K_r$-percolation up to a poly-logarithmic factor for every $r \ge 4$ and moreover, using a completely different method, we shall determine the threshold for percolation in the case $r = 4$.

Given a graph $H$, we define \emph{$H$-bootstrap percolation} (or \emph{$H$-edge-bootstrap percolation}) as follows. Given a set $G \subset E(K_n)$ of initially `infected' edges on vertex set $[n]$ (that is, given a graph), we set $G_0 = G$ and define, for each $t \ge 0$,
$$G_{t+1} \, := \,  G_t \cup \Big\{ e \in E(K_n) \,:\, \exists \, H \text{ with } e \in H \subset G_t \cup \{e\}  \Big\}.$$
In words, this says that an edge $e$ becomes infected at time $t + 1$ if there exists a copy of $H$ in $K_n$ for which $e$ is the only uninfected edge at time $t$. Let $\< G \>_H = \bigcup_t G_t$ denote the closure of $G$ under the $H$-bootstrap process, and say that $G$ \emph{percolates} (or $H$-percolates) in $K_n$ if $\< G \>_H = E(K_n)$.

The $H$-bootstrap process was introduced over 40 years ago by Bollob\'as~\cite{Bela68} (see also~\cite{Bela11}), under the name `weak saturation'. He conjectured that if a graph $G$ percolates in the $K_r$-process, then $G$ has at least $\binom{n}{2} - \binom{n-r+2}{2}$ edges, and, building on work in~\cite{Bela65}, proved his conjecture when $r \le 7$. For general $r$, the conjecture was proved using linear algebraic methods by Alon~\cite{Alon}, Frankl~\cite{Fra} and Kalai~\cite{Kalai}. See~\cite{BBMR} for more recent extremal results, on a closely related process, using such methods.

In this paper, we shall study the $H$-bootstrap process in the random setting, i.e., when the initial graph $G$ is chosen to be $G_{n,p}$. Apart from its intrinsic interest, this question is motivated by the following, closely related cellular automaton, which was introduced in 1979 by Chalupa, Leath and Reich~\cite{CLR} in the context of disordered magnetic systems, and for which our process is named. Given an underlying graph $G$, an integer $r$ and a set of infected vertices $A \subset V(G)$, set $A_0 = A$ and let
$$A_{t+1} \, := \,  A_t \cup \big\{v \in V(G) : |N(v) \cap A_t| \ge r \big\}$$
for each $t \ge 0$; that is, we infect a vertex if it has at least $r$ already-infected neighbours. Say that the set $A$ \emph{percolates} if the entire vertex set is eventually infected. This process is known as $r$-neighbour bootstrap percolation, and has been extensively studied by mathematicians (see, for example,~\cite{AL,BBDM,CC,Hol,HLR,Sch}), physicists (see~\cite{ALev}, and the references therein) and sociologists~\cite{Gran,Watts}, amongst others. It has moreover found applications in the Glauber Dynamics of the Ising model (see~\cite{FSS,M}).

The $r$-neighbour bootstrap model is usually studied in the random setting, where the main question is to determine the \emph{critical threshold} at which percolation occurs. To be precise, if $V(G) = [n]$ and the elements of $A \subset V(G)$ are chosen independently at random, each with probability $p$, then one aims to determine the value $p_c$ of $p = p(n)$ at which percolation becomes likely. Sharp bounds on $p_c$ have recently been determined in several cases of particular interest, such as $[n]^d$ (see~\cite{BBDM,BBMMaj,BBMdr3,BBMhigh,GHM,Hol}), on a large family of `two-dimensional' graphs~\cite{DCH}, on trees~\cite{BPP,FS}, and on various types of random graph~\cite{BP, JLTV}. In each case, it was shown that the critical probability has a sharp threshold.

Motivated by these results, let us define the critical threshold for $H$-bootstrap percolation on $K_n$ as follows:
$$p_c(n,H) \, := \, \inf\Big\{ p : \Pr\big( \< G_{n,p} \>_H = K_n \big) \ge 1/2 \Big\},$$
where $G_{n,p}$ is the Erd\H{o}s-R\'enyi random graph, obtained by choosing each edge independently with probability $p$. (For background on the theory of Random Graphs, see~\cite{RG}.) We remark that, by a general result of Bollob\'as and Thomason~\cite{BT}, the event $\< G_{n,p} \>_H = K_n$ has a threshold, i.e., if $p \ll p_c(n,H)$ then the probability of percolation is $o(1)$, and if $p \gg p_c(n,H)$ then it is $1 - o(1)$. Moreover, a general result of Friedgut~\cite[Theorem~1.4]{Fri}, combined with Theorem~\ref{k=4}, below, shows that this event has a sharp threshold\footnote{An event $A$ has a \emph{sharp} threshold if the `window' (in $p$) in which $A$ has probability between $\eps$ and $1 - \eps$ has size $o(p_c)$; otherwise it has a \emph{coarse} threshold.} when $H = K_4$, and we expect this to hold for all $K_r$. However, it is not hard to see that if $H = K_r + e$ (i.e., $H$ is $K_r$ plus a pendant edge) then the events $\< G_{n,p} \>_H = K_n$ and $K_r - e \subset G_{n,p}$ differ by a set of measure $o(1)$ at $p = p_c$, so in this case the event has a coarse threshold. 

Our aim is to determine $p_c(n,H)$ for every graph $H$. Here we shall study the case $H = K_r$, the complete graph; our main theorems partially solve Problem~1 of~\cite{Bela11}. In order to aid the reader's intuition, let us first consider the case $H = K_3$, which follows easily from classical results. Indeed, it is easy to see that $G$ percolates in the $K_3$-process if and only if $G$ is connected. It is well-known (see~\cite{RG}) that, with high probability, $G_{n,p}$ is connected if and only if it has no isolated vertex; thus, a straightforward calculation gives the following theorem of Erd\H{o}s and R\'enyi~\cite{ER}, which was one of the first results on random graphs:
$$p_c(n,K_3) \, = \,  \frac{\log n}{n} \,+\, \Theta\bigg( \frac{1}{n} \bigg).$$
In fact Erd\H{o}s and R\'enyi proved even more: that if $p = (\log n + c)/n$, then the probability that $G_{n,p}$ percolates in the $K_3$-process converges to $e^{-e^{-c}}$ as $n \to \infty$. We remark that the same result holds for the $C_k$-process for any $k \ge 3$, see Section~\ref{othersec}.

For $r \ge 4$, the problem is more challenging, since there seems to be no simple description of the closed sets under the $K_r$-process. Set
$$\lambda(r) \,:=\, \frac{{r \choose 2} - 2 }{r - 2}.$$
The following theorem is our main result.

\begin{thm}\label{thm:Kr}
For every $r \ge 4$, there exists a constant $c = c(r) > 0$ such that
$$\frac{n^{-1/\lambda(r)}}{c \log n} \, \le \,  p_c(n,K_r) \, \le \,  n^{-1/\lambda(r)} \log n$$
for every sufficiently large $n \in \N$.
\end{thm}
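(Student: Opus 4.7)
The plan is to prove the two inequalities by genuinely different arguments.

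The key quantity $\lambda(r) = (\binom{r}{2}-2)/(r-2)$ is the two-density of $K_r^-$ (the complete graph on $r$ vertices minus an edge), so that $n^{-1/\lambda(r)}$ is precisely the threshold at which every pair of vertices lies in many copies of $K_r^-$ in $G_{n,p}$. For the lower bound, I would use a first-moment argument over ``derivations'' of infected edges. If an edge $e$ is eventually infected, there is a tree-like derivation certifying this: a sequence of copies of $K_r$ in $K_n$, each of which, at the moment it is used, has exactly one not-yet-infected edge, which then becomes infected. Each step costs a probabilistic factor of roughly $n^{r-2} p^{\binom{r}{2}-2}$, which is at most $(\log n)^{-(\binom{r}{2}-2)}$ below the stated threshold. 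The strategy is to enumerate derivation trees by an entropy-style count and apply a union bound over edges, showing that no edge of $K_n$ is likely to be infected. The principal difficulty is the bookkeeping: enumerating derivation trees so that the combinatorial multiplicity is dominated by the small per-step probability, while still leaving a $\log n$ slack for the union bound over $\binom{n}{2}$ target edges.

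For the upper bound, I would proceed constructively, exhibiting with high probability a percolating subgraph of $G_{n,p}$. At this density $G_{n,p}$ contains many copies of $K_r$, providing abundant seeds. The basic growth step is absorption: if the infected closure contains a clique $K_m$ on vertex set $S$ and a vertex $v \notin S$ has at least $r-2$ neighbours in $S$ (in the edge set of $G_{n,p}$), then every non-edge from $v$ to $S$ can be infected in turn, since each is the last uninfected edge of a copy of $K_r$ spanned by $v$, the $r-2$ neighbours, and the endpoint whose edge to $v$ is being completed. Iterating absorption, once an infected clique reaches size $\gg n^{1/\lambda(r)}$ every remaining vertex has $\ge r-2$ neighbours in it by Chernoff, and the whole of $K_n$ becomes infected.

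The main obstacle is the nucleation gap between a constant-size seed and the critical clique size: at $p = n^{-1/\lambda(r)} \log n$, the expected number of absorbable vertices for a fixed copy of $K_r$ is sub-constant once $r \ge 5$, so no single seed grows on its own. One must instead run many seeds in parallel and allow their infected neighbourhoods to merge, which requires an algorithmic framework (akin to the \emph{Witness-Set Algorithm} or \emph{Red Edge Algorithm} defined later in the paper) that tracks already-infected edges and exploits the $K_r^-$ extensions of $G_{n,p}$ to combine disjoint fragments. The $\log n$ slack in $p$ must absorb the probabilistic error across the $O(\log n)$ growth stages and the union bounds over fragments. This nucleation-style argument, reminiscent of statistical-physics bootstrap percolation, is the technical heart of the proof.
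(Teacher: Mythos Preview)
Your lower bound sketch points in the right direction---a first-moment argument over witness structures---but the heuristic ``each step costs $n^{r-2}p^{\binom{r}{2}-2}$'' is not a proof. That estimate presumes every new $K_r$ in the derivation contributes exactly $r-2$ fresh vertices and $\binom{r}{2}-2$ fresh edges of $G_{n,p}$, which fails whenever cliques in the derivation overlap in more than two vertices. The substance of the paper's lower bound is precisely the extremal Lemma~\ref{extremal}: any $r$-witness set $F$ satisfies $e(F)\ge\lambda(r)(v(F)-2)+1$. This is proved via a delicate induction (Lemma~\ref{tech}) tracking how components of the clique-intersection graph merge, and it is not mere bookkeeping---it is the heart of the argument. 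You also omit the Aizenman--Lebowitz step (Lemma~\ref{ALlem}), which is needed to reduce to witness sets of size $O(\log n)$ before the union bound over $\binom{n}{2}$ edges can succeed.

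Your upper bound strategy is genuinely different from the paper's, and the gap you identify is real and unresolved in your proposal. For $r\ge 5$ a single $K_r$ seed does not grow by vertex absorption at $p=n^{-1/\lambda(r)}\log n$, and ``run many seeds in parallel and merge'' is not an argument: disjoint copies of $K_r$ do not merge in the $K_r$-process without further structure, and you give no mechanism for producing that structure. (Your appeal to the Witness-Set and Red Edge Algorithms is misplaced---in the paper those are tools for the \emph{lower} bound.) The paper avoids nucleation entirely. It constructs, for each $d$, an explicit graph $H_d$ (a chain of $d$ copies of $K_r$ glued along non-adjacent edges) with $(r-2)d+2$ vertices and $(\binom{r}{2}-2)d+1$ edges, whose presence in $G_{n,p}$ rooted at $e$ forces $e\in\langle G_{n,p}\rangle_{K_r}$. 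A second-moment calculation (Lemmas~\ref{expX}--\ref{varX}, with the key structural input Lemma~\ref{var:ext}) shows that with $d\approx\log n/\log\log n$ such a rooted copy exists for almost every edge; sprinkling then finishes. This sidesteps the growth problem altogether by exhibiting, in one shot, a witness of the right density for each edge.
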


In fact we shall prove slightly stronger bounds (see Propositions~\ref{prop:bal} and~\ref{prop:lower}); however, we do not expect either of our bounds to be sharp. The proof of the lower bound in Theorem~\ref{thm:Kr} is based on an extremal result on graphs which cause a given edge to be infected (see Lemma~\ref{extremal}). Although it is not long, the proof of this lemma is delicate, and does not seem to extend easily to other graphs. The upper bound, on the other hand, holds for a much wider family of graphs $H$ (see Section~\ref{uppersec}), which we call `balanced'.

In the case $r = 4$ we shall prove the following stronger result, which determines the sharp threshold of $p_c(n,K_4)$ up to a constant\footnote{In the published version of this article, we stated a slightly stronger upper bound than that claimed here. This was due to a small error in the proof of Proposition~\ref{k4:upper}, below. We would like to thank Brett Kolesnik for pointing out this error to us.}  factor.

\begin{thm}\label{k=4}
If $n$ is sufficiently large, then
$$ \frac{1}{4} \sqrt{\frac{1}{ n \log n} } \, \le \,  p_c(n,K_4) \, \le \,  24 \sqrt{ \frac{1}{n \log n} }.$$
\end{thm}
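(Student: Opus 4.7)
The key mechanism of the $K_4$-process is clique absorption: if $K \subset \<G_{n,p}\>_{K_4}$ is a clique of size $m$ and an external vertex $y$ has two neighbours $w,x \in K$ in $G_{n,p}$, then for each $z \in K \setminus \{w,x\}$ the edges $yw, yx, wz, xz$ together with $wx \in K$ form a copy of $K_4-yz$ on $\{y,w,x,z\}$, so $yz$ becomes infected; hence $K \cup \{y\}$ is again a clique at the next step. As a special case, whenever $uv \in G_{n,p}$ the set $(N(u)\cap N(v)) \cup \{u,v\}$ becomes a clique in one round of the $K_4$-process. I would therefore run a two-phase ``Clique Process'': first find a seed clique, then iteratively absorb external vertices with $\ge 2$ neighbours in the current clique.

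\textbf{Seeding and growth.} For the seed, tail bounds on $\mathrm{Bin}(n-2, p^2)$ over the $\binom{n}{2}$ candidate pairs show that whp there is an edge $\{u,v\}\in G_{n,p}$ with $|N(u)\cap N(v)|\ge m_0$ for a suitable $m_0$; the constant $24$ in $p = 24(n\log n)^{-1/2}$ is chosen so that $m_0$ exceeds $\log n / 288$. For the growth, given a clique of size $m_t$, the number of external vertices with $\ge 2$ neighbours in it has mean concentrated around $\tfrac{1}{2} n p^2 m_t^2$ (valid while $m_t p \ll 1$). Since $np^2 = 576/\log n$, setting $a_t := 288\, m_t/\log n$ turns the recursion into $a_{t+1} \approx a_t^2$, so provided $a_0 > 1$ the sequence $a_t = a_0^{2^t}$ blows up doubly exponentially. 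Within $O(\log\log n)$ rounds the clique size surpasses $1/p \sim \sqrt{n\log n}/24$; one further round, in which each remaining vertex has $\gg \log n$ neighbours in the clique, absorbs every vertex by a Chernoff bound, yielding $\<G_{n,p}\>_{K_4} = K_n$.

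\textbf{Lower bound.} For the lower bound I would mirror the extremal philosophy behind Lemma~\ref{extremal}: if $uv \in \<G\>_{K_4} \setminus G$, then $G_{n,p}$ contains a subgraph $H$ on a vertex set $V \supset \{u,v\}$ of size $m$ which percolates the edge $uv$ in the $K_4$-process on $K_V$, and any such $H$ must satisfy $|E(H)| \ge 2m - 3$ (a local weak-saturation bound in the spirit of Bollob\'as--Alon--Frankl--Kalai). With $p = \tfrac{1}{4}(n\log n)^{-1/2}$, the expected number of candidate witness pairs $(V,H)$ containing fixed $u,v$ is at most
\[
\binom{n-2}{m-2}\binom{\binom{m}{2}}{2m-3} p^{2m-3} \,\le\, n^{-1/2}\bigl(O(m^2)/\sqrt{\log n}\bigr)^{2m-3},
\]
which is summable over $m$. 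Applying a carefully chosen union bound (e.g.\ over the first edge infected by the process, rather than a naive bound over all pairs) then shows that whp some edge of $K_n$ admits no witness at all, so $G_{n,p}$ fails to percolate.

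\textbf{Main obstacle.} The delicate step is the subcritical phase of the Clique Process, where $m_t$ only slightly exceeds $\log n$: the expected intake $\tfrac12 np^2 m_t^2$ is then a slowly-growing function and direct Chernoff concentration is weak, so one must either run many independent seed candidates in parallel and argue that at least one nucleates, or perform a second-moment / martingale analysis across several consecutive rounds to push $m_t$ past the critical scale $1/p$. On the lower-bound side, the naive sum over all pairs $(u,v)$ is too lossy, and exploiting the weak-saturation bound in its sharpest local form — or restricting attention to the last edge to be infected — is essential in order to capture the $\sqrt{\log n}$ improvement over the generic bound of Theorem~\ref{thm:Kr}.
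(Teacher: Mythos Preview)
Your upper-bound seeding step does not work as stated. With $p = 24(n\log n)^{-1/2}$ you have $np^2 = 576/\log n \to 0$, so for a fixed edge $\{u,v\}$ the common neighbourhood $|N(u)\cap N(v)|$ is $\mathrm{Bin}(n-2,p^2)$ with mean $o(1)$. Even maximizing over all $\binom{n}{2}$ pairs, the largest common neighbourhood you can hope for is of order $\log n/\log\log n$, not $\log n/288$: indeed $\Pr(\mathrm{Bin}(n,p^2)\ge k) \le (enp^2/k)^k$, and setting this equal to $n^{-2}$ forces $k \sim 2\log n/\log\log n$. Hence $a_0 = 288\,m_0/\log n \to 0$, your recursion $a_{t+1}\approx a_t^2$ contracts rather than explodes, and the clique never reaches the critical scale. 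No choice of the constant in front of $(n\log n)^{-1/2}$ fixes this, since $np^2 = \Theta(1/\log n)$ for any such constant.

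The paper avoids this obstacle entirely: instead of trying to locate a seed clique via common neighbourhoods, it runs a second-moment argument directly on the random variable $X(\ell,p)$ counting internally spanned copies of $K_\ell$ with $\ell = \tfrac14\log n$. The lower bound on $P(\ell,p)$ (Lemma~\ref{Plp}) comes from counting graphs in which each vertex $j\ge 3$ sends two edges backwards, and the variance bound (Lemma~\ref{var}) uses that if $D(S,R)$ holds then $G_{n,p}$ must contain at least $2(v(R)-v(S))$ edges in $R\setminus S$ (Lemma~\ref{Dext}). This produces a seed clique of size $\Theta(\log n)$ in one shot. The subsequent growth is then done by sprinkling: the paper uses a sequence of independent graphs $G_j = G_{n,p_j}$ with $p_j = 2^{-j+2}p$, so that each absorption round uses fresh randomness and a simple Chernoff bound suffices. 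You do not mention sprinkling, and without it the successive absorption rounds are badly dependent.

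For the lower bound, your witness-set approach with the bound $e(H)\ge 2m-3$ is in the right spirit, but your sum over $m$ does not converge: the term behaves like $n^{-1/2}\bigl(\Theta(m/\log n)\bigr)^m$, which blows up once $m$ exceeds a constant times $\log n$. The fixes you suggest (``first edge infected'', ``last edge'') do not address this. What is actually needed is an Aizenman--Lebowitz-type statement restricting attention to a bounded window of scales. The paper obtains this via the Clique Process (Observation~\ref{obsk4} and Lemma~\ref{ALlemma}): if $G_{n,p}$ percolates then for $L=\log n$ there is an internally spanned clique of size between $L$ and $3L$, and the expected number of such cliques is then $o(1)$ by Lemma~\ref{Plp} and Markov.
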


The proof of Theorem~\ref{k=4} is completely different from that of Theorem~\ref{thm:Kr}, and is based on ideas from two-neighbour bootstrap percolation on $[n]^d$. 

The rest of the paper is organized as follows. In Sections~\ref{uppersec} and~\ref{lowersec} we shall prove the upper and lower bounds in Theorem~\ref{thm:Kr}, respectively. In Section~\ref{k4sec} we shall prove Theorem~\ref{k=4}, and in Section~\ref{othersec} we shall discuss other graphs $H$, and state some open problems.

\section{An upper bound for balanced graphs}\label{uppersec}

In this section we shall prove the upper bound in Theorem~\ref{thm:Kr}; in fact we prove a stronger bound for a more general family of graphs, $H$. Throughout, we shall assume that $v(H) \ge 4$, since otherwise the problem is trivial. We make the following definition.

\begin{defn}\label{def:bal}
We call a graph $H$ \emph{balanced} if $e(H) \ge 2v(H) - 2$, and
$$\frac{e(F) - 1}{v(F) - 2} \, \le \,  \lambda(H) \, := \,  \frac{e(H)-2}{v(H)-2}$$
for every proper subgraph $F \subset H$ with $v(F) \ge 3$.
\end{defn}

It is straightforward to check that the complete graph $K_r$ is balanced for every $r \ge 4$. Thus, the upper bound in Theorem~\ref{thm:Kr} follows immediately from the following proposition.

\begin{prop}\label{prop:bal}
If $H$ is a balanced graph, then
$$p_c(n,H) \, \le \,  C \left( \frac{\log n}{\log \log n} \right)^{2 / \lambda(H)} n^{-1 / \lambda(H)},$$
for some constant $C = C(H) > 0$.
\end{prop}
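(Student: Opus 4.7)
The plan is a growing-clique argument. Set $p := C(\log n/\log\log n)^{2/\lambda(H)} n^{-1/\lambda(H)}$ with $C = C(H)$ large, and write $G := G_{n,p}$. I will exhibit, with high probability, a nested chain of sets $S_0 \subset S_1 \subset \cdots \subset S_k = [n]$ such that the clique $K_{S_i}$ lies in the $H$-closure $\< G \>_H$ at every stage. For the seed, a first-moment computation at this $p$ shows that $G$ contains a copy of some fixed constant-size $H$-percolating pattern (for instance a copy of $H$ itself, or for $H = K_r$ just a $K_r$) on some vertex set $S_0$ of size $O_H(1)$; running the $H$-process inside $S_0$ completes the clique on $S_0$.

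For the growth step, given an infected clique on $S$ of size $s$, I call a vertex $v \notin S$ \emph{absorbable} if there exist $u \in S$ and $W \subset [n]\setminus(S \cup \{v\})$ of size $v(H)-2$ such that $\{v,u\} \cup W$ supports a copy of $H$ with every edge except $uv$ present in $G$. Applying the $H$-rule to such a near-copy infects $uv$; repeating the construction at every $u' \in S$ then infects all edges from $v$ into $S$, so $S \cup \{v\}$ joins the infected clique. For a fixed pair $(u,v)$ the expected number of extensions is $\Theta(n^{v(H)-2}p^{e(H)-1})$, so summing over $u \in S$ and $v \notin S$ gives an expected absorbable count of order $s \cdot n^{v(H)-1} p^{e(H)-1}$, which after substituting $p$ equals $s \cdot n^{(e(H)-v(H))/(e(H)-2)} \cdot (\log n / \log\log n)^{2(e(H)-1)/\lambda(H)}$. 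The exponent $(e(H)-v(H))/(e(H)-2)$ is strictly positive by the balanced hypothesis $e(H) \ge 2v(H)-2$, so even a single growth round pushes $s_0 = O_H(1)$ above $(\log n)/p$. Concentration of the absorbable-vertex count around this expectation follows from Janson's inequality (or a careful second moment), with the proper-subgraph condition $(e(F)-1)/(v(F)-2) \le \lambda(H)$ being exactly what controls the variance.

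For the finishing step, once $|S_i| \ge (\log n)/p$, a Chernoff bound on $\Bin(|S_i|, p)$ plus a union bound over $[n]$ shows that every remaining $v$ has at least $v(H)-1$ edges into $S_i$ in $G$; each such $v$ is absorbable by the argument above, so the $H$-closure contains $K_{[n]}$ and percolation follows.

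The main obstacle is the second-moment / Janson step inside the growth phase: one must show that the number of absorbable vertices concentrates around its expectation uniformly over the dense sub-configurations of $H$. The balanced hypothesis $(e(F)-1)/(v(F)-2) \le \lambda(H)$ is calibrated exactly for this estimate, as it certifies that no proper subgraph $F$ of $H$ contributes disproportionately to the variance of the rooted extension count. The polylog factor in our choice of $p$ then provides the slack needed to make the concentration quantitative, so that the probability of failure at any stage is $o(1)$.
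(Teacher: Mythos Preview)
Your growth step has a genuine gap. You define $v$ to be \emph{absorbable} if there exist \emph{some} $u \in S$ and $W$ outside $S$ giving a near-copy of $H$ that infects the single edge $uv$. You then assert that ``repeating the construction at every $u' \in S$'' infects all edges from $v$ into $S$. But repeating the construction at $u'$ requires a \emph{separate} extension $W'$ for the pair $(u',v)$, and for fixed $v$ and fixed $u' \in S$ the expected number of such extensions is
\[
\Theta\big(n^{v(H)-2}p^{e(H)-1}\big) \,=\, \Theta\Big( (np^{\lambda(H)})^{v(H)-2} \cdot p \Big) \,=\, (\log n)^{O_H(1)}\, n^{-1/\lambda(H)} \,=\, o(1).
\]
Summing over $u' \in S$ with $|S| = O_H(1)$, the expected number of edges from $v$ into $S$ that get infected this way is still $o(1)$: a typical ``absorbable'' $v$ acquires at most one infected edge into $S$, which is far short of the $|S|$ edges needed for $S \cup \{v\}$ to be a clique in $\<G\>_H$. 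Your count $s\cdot n^{v(H)-1}p^{e(H)-1}$ is really the expected number of \emph{edges} from $S$ to its complement that become infected, not the number of vertices you can absorb, and these edges are spread thinly over $\Theta(n)$ outside vertices. (There is also a smaller issue at the seed: for a general balanced $H$, e.g.\ balanced bipartite $H$, $\<H\>_H$ need not be a clique, so ``a copy of $H$ itself'' does not always furnish $S_0$.)

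The paper avoids cliques altogether until a final sprinkling step. It builds, for each $d$, a chain graph $H_d$ of $d$ copies of $H$ glued along non-adjacent edges, with $(v(H)-2)d+2$ vertices, $(e(H)-2)d+1$ edges, and a designated root pair whose edge lies in $\<H_d\>_H$. With $d \asymp \log n/\log\log n$ one has $\Ex X_d(e) \to \infty$ for each edge $e$, and the balanced hypothesis is used --- not for a Janson bound on one-step extensions --- but to prove an extremal lemma: every proper subgraph $F \subsetneq H_d$ containing the root pair satisfies $e(F) \le (v(F)-2)\lambda(H)$. This controls $\Var X_d(e)/\Ex X_d(e)^2$ directly, so Chebyshev gives $e \in \<G_{n,p}\>_H$ for almost all $e$; a large clique then appears in $\<G_{n,p}\>_H$ by Tur\'an, and a single round of sprinkling (each remaining vertex has $\ge v(H)-1$ neighbours in that clique) finishes. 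The point is that the $d$-fold chain $H_d$ does the ``growing'' in one shot, rooted at the target edge, rather than trying to enlarge a clique vertex by vertex.
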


Note that $\lambda(K_r) = \lambda(r)$ is increasing, and satisfies
\begin{equation}
\frac{r}{2} \,\le\, \lambda(r) \,\le\, \frac{r+1}{2}
\end{equation}
if $r \ge 4$ (we shall use these bounds several times during the proof), so Proposition~\ref{prop:bal} actually implies the following slightly stronger upper bound than that stated in Theorem~\ref{thm:Kr}:
$$p_c(n,K_r) \, \le \,   n^{-1 / \lambda(H)} \big( \log n \big)^{4 / r}.$$

We begin by sketching the proof of Proposition~\ref{prop:bal}. We shall describe one way in which an edge can be infected after $d$ steps, and then show that (with high probability) most edges will be infected in this way (if they are not already infected sooner). Indeed, for each $d \in \N$ we shall define a graph $H_d$ with $(v(H) - 2)d + 2$ vertices and $(e(H) - 2)d + 1$ edges, and an edge $e \in {V(H_d) \choose 2} \setminus E(H_d)$ (which we call the \emph{root} of $H_d$) such that $e \in \< H_d \>_H$, and $H_d$ is minimal subject to this condition. In other words, $H_d$ causes $e$ to be infected in the $H$-bootstrap process, and no subgraph of $H_d$ has this property.

To define the graphs $H_d$, first choose a sequence of edges $(e_1,e_2,\ldots)$ of $H$ such that  for every $j \in \N$, $e_j$ and $e_{j+1}$ do not share an endpoint. Let $(H^{(1)}, H^{(2)}, \ldots, H^{(d)})$ be a sequence of copies of $H$ and, for each $1 \le j \le d-1$, identify the endpoints of the edge $e_{j+1}$ in $H^{(j)}$ and $H^{(j+1)}$ (see Figure~1). Finally, remove the edge $e_1$ from $H^{(1)}$ and, for each $1 \le j \le d-1$, remove the edge $e_{j+1}$ from $H^{(j)} \cap H^{(j+1)}$.

\vskip0.1in
\[ \unit = 0.4cm \hskip -16\unit
\medline \ellipse{0}{0}{3}{2} \ellipse{4}{0}{3}{2} \ellipse{8}{0}{3}{2} \ellipse{12}{0}{3}{2} \ellipse{16}{0}{3}{2}
\varpt{3000}
\point{0}{0}{$ \pt{2}{1} \pt{2}{-1}  \dl{2}{0.9}{2}{0.7}  \dl{2}{0.5}{2}{0.3}   \dl{2}{0.1}{2}{-0.1}   \dl{2}{-0.3}{2}{-0.5} \dl{2}{-0.7}{2}{-0.9}  \point{2.2}{-0.1}{\tiny $e_2$} \point{3.25}{0.75}{\small $H^{(2)}$}  $}
\point{4}{0}{$ \pt{2}{1} \pt{2}{-1}  \dl{2}{0.9}{2}{0.7}  \dl{2}{0.5}{2}{0.3}   \dl{2}{0.1}{2}{-0.1}   \dl{2}{-0.3}{2}{-0.5} \dl{2}{-0.7}{2}{-0.9} \point{2.2}{-0.1}{\tiny $e_3$} \point{3.25}{0.75}{\small $H^{(3)}$} $}
\point{8}{0}{$ \pt{2}{1} \pt{2}{-1}  \dl{2}{0.9}{2}{0.7}  \dl{2}{0.5}{2}{0.3}   \dl{2}{0.1}{2}{-0.1}   \dl{2}{-0.3}{2}{-0.5} \dl{2}{-0.7}{2}{-0.9}  \point{2.2}{-0.1}{\tiny $e_4$} \point{3.25}{0.75}{\small $H^{(4)}$}  $}
\point{12}{0}{$ \pt{2}{1} \pt{2}{-1}  \dl{2}{0.9}{2}{0.7}  \dl{2}{0.5}{2}{0.3}   \dl{2}{0.1}{2}{-0.1}   \dl{2}{-0.3}{2}{-0.5} \dl{2}{-0.7}{2}{-0.9} \point{2.2}{-0.1}{\tiny $e_5$} \point{3.45}{0.75}{\small $H^{(5)}$} $}
\point{-4}{0}{$ \pt{2}{1} \pt{2}{-1}  \dl{2}{0.9}{2}{0.7}  \dl{2}{0.5}{2}{0.3}   \dl{2}{0.1}{2}{-0.1}   \dl{2}{-0.3}{2}{-0.5} \dl{2}{-0.7}{2}{-0.9}  \point{2.2}{-0.1}{\tiny $e_1$} \point{3.1}{0.75}{\small $H^{(1)}$}  $}
\point{3}{-4}{\small Figure 1: The graph $H_5$}
\]
\vskip0.2in

For each $1 \le j \le d$, let $V_j = V(H_j)$, so $|V_i \cap V_j| = 2$ if $|i - j| = 1$, and $|V_i \cap V_j| = 0$ otherwise. Note that $E(H_d) = E( H_d[V_1]) \cup \ldots \cup E( H_d[V_d] )$, that $E\big( H_d[V_j] \big) = E(H) \setminus \big\{ e_j,e_{j+1} \big\}$ for every $1 \le j \le d-1$, and that $E\big( H_d[V_d] \big) = E(H) \setminus \{ e_d \}$. Finally, set $\root(H_d) = e_1$. We remark that although the definition of $H_d$ depends on the choice of $(e_1,e_2,\ldots)$, the proof below will work for any such sequence.

We shall use the following simple properties of $H_d$.

\begin{obs}
For every $d \in \N$,
$$(a) \; v(H_d) = (v(H) - 2)d + 2 \qquad (b) \; e(H_d) = (e(H) - 2)d + 1 \qquad (c) \; \root(H_d) \in \< H_d \>_H.$$
\end{obs}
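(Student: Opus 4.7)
The plan is to verify (a) and (b) by directly counting from the construction, and to establish (c) by a backward induction along the sequence $e_d, e_{d-1}, \ldots, e_1$. For (a), I would begin with $d \cdot v(H)$ vertices in the disjoint union of the $H^{(j)}$; each of the $d-1$ successive identifications merges two disjoint vertex pairs into two vertices, and these identifications are themselves vertex-disjoint because consecutive edges $e_j, e_{j+1}$ share no endpoint. Thus $2(d-1)$ vertices are lost, giving $(v(H)-2)d+2$. For (b), starting from $d \cdot e(H)$ edges, each identification fuses two copies of $e_{j+1}$ into a single edge (losing $d-1$), and the construction then deletes those $d-1$ merged edges as well as $e_1$ from $H^{(1)}$; the total is $d \cdot e(H) - 2(d-1) - 1 = (e(H)-2)d + 1$.

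For (c), I would prove by induction on $k = 1, 2, \ldots, d$ that $e_{d-k+1}$ is infected by time $k$ in the $H$-bootstrap process started from $H_d$. The base case $k=1$ uses the stated structural fact $E(H_d[V_d]) = E(H) \setminus \{e_d\}$: since $V_d$ spans a $K_{v(H)}$ in $K_n$, applying the $H$-bootstrap rule to the copy of $H$ embedded on $V_d$ (in which $e_d$ is the only uninfected edge) infects $e_d$. For the inductive step, suppose $e_{j+1}$ has been infected; since $V_j \cap V_{j+1}$ is exactly the endpoint pair of $e_{j+1}$, this edge also lies inside $V_j$. Combined with $E(H_d[V_j]) = E(H) \setminus \{e_j, e_{j+1}\}$, the currently infected edges of the embedded $H$ on $V_j$ form $H \setminus \{e_j\}$, so the rule infects $e_j$. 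Taking $k = d$ yields $\root(H_d) = e_1 \in \< H_d \>_H$.

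I do not expect any real obstacle: the only thing to keep in mind is that the non-adjacency condition $e_j \cap e_{j+1} = \emptyset$ in the chosen sequence is precisely what makes the vertex identifications pairwise disjoint (so the counts in (a) and (b) are clean) and simultaneously ensures that both $e_j$ and $e_{j+1}$ are honest edges inside the $K_{v(H)}$ on $V_j$, so that the induction step in (c) never degenerates.
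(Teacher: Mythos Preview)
Your proposal is correct and follows essentially the same approach as the paper: the paper dispatches (a) and (b) in one line by noting that the edge sets $E(H_d[V_j])$ are disjoint, and proves (c) by exactly the same backward induction, observing that $e_{d-j+1}$ is infected after $j$ steps because it completes a copy of $H$ on $V_{d-j+1}$. Your version simply spells out the vertex and edge counts in more detail.
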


\begin{proof}
Properties~$(a)$ and~$(b)$ follow immediately from the definition, since the edge sets of $H_d[V_j]$ are all disjoint. To prove $(c)$, simply note that the edge $e_{d-j+1}$ is infected after $j$ steps of the $H$-bootstrap process on $H_d$, since it completes a copy of $H$ on vertex set $V_{d-j+1}$, and hence $e_1 \in \< H_d \>_H$, as claimed.
\end{proof}

Let $X_d(e)$ be the random variable which counts the number of copies of $H_d$ in $G_{n,p}$, rooted at a given edge $e \in E(K_n)$. It is straightforward (using properties~$(a)$ and~$(b)$) to show that the expected value of $X_d$ is large if $p^{\lambda(H)} n \ge  (\log n)^2$ (see Lemma~\ref{expX}); the main challenge will be to bound the variance of $X_d$. The key step is therefore Lemma~\ref{var:ext}, below, which controls the number of edges in the intersection of two copies of $H_d$ with the same root: this will enable us to prove Lemma~\ref{varX}.
Having bounded the variance of $X_d$, the proposition follows easily by Chebyshev's inequality.

We begin by bounding the expected value of the counting function $X_d(e)$.

\begin{lemma}\label{expX}
Let $H$ be a balanced graph, and $e \in E(K_n)$. If $p = p(n)$ and $d = d(n)$ are chosen so that $p^{\lambda(H)} n \ge \omega v(H)d$ and $\omega^{(v(H) - 2)d} \ge n$ for some function $\omega = \omega(n)$, and $pn \to \infty$, then
$$\Ex\big( X_d(e) \big) \, \to \,  \infty$$
as $n \to \infty$.
\end{lemma}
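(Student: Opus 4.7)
The plan is a direct first-moment calculation. Setting $k = v(H) - 2$, properties $(a)$ and $(b)$ give $v(H_d) = kd + 2$ and $e(H_d) - 1 = \lambda(H) kd$. Counting embeddings of $H_d$ into $K_n$ that send the root to $e$, and dividing by $|\mathrm{Aut}(H_d,e_1)| \le v(H_d)!$ to pass to subgraph copies, a standard calculation gives
$$\Ex(X_d(e)) \,\ge\, \frac{1}{(kd+2)!}\left(\frac{np^{\lambda(H)}}{2}\right)^{kd}$$
for all sufficiently large $n$ (provided $v(H_d) \le n/2$, which follows from $p \le 1$ and the first hypothesis).

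The core task is to show that the two hypotheses jointly force this quantity to tend to infinity. First, the hypothesis $np^{\lambda(H)} \ge \omega v(H) d$ turns the numerator into $(\omega v(H) d/2)^{kd}$, while Stirling's formula gives $(kd+2)! \le O(\sqrt{kd}) \cdot ((kd+2)/e)^{kd+2}$. After cancelling the dominant $(kd)^{kd}$ factors, the crucial algebraic step is the identity
$$\frac{v(H) d}{kd+2} \,=\, \frac{(k+2) d}{kd+2} \,\ge\, 1 \qquad (d \ge 1),$$
which reflects exactly the ``$k$ new vertices per step'' structure of $H_d$ (each new copy $H^{(j+1)}$ shares two vertices with $H^{(j)}$ and contributes $k$ fresh ones). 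Using this, the bound reduces to the shape $(\omega e/2)^{kd}/O((kd)^{5/2})$. Finally, the second hypothesis $\omega^{kd} \ge n$ yields $(\omega e/2)^{kd} \ge n (e/2)^{kd}$, and since $e/2 > 1$ the function $(e/2)^{x}/(x+2)^{5/2}$ is bounded below by a positive constant on $x \ge 0$. Hence $\Ex(X_d(e)) \ge c_H \cdot n \to \infty$, as required.

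The main obstacle I anticipate is the interplay between the combinatorial factor $(kd+2)!$ and the polynomial count $(n/2)^{kd}$: a priori, Stirling dissipates the naive $n^{kd}$ gain down to essentially nothing when $d$ is large. The two hypotheses have been calibrated precisely to overcome this --- the first provides the polynomial factor $n^{kd}$, and the second, with the crucial exponent $v(H) - 2 = k$ (matching exactly the Stirling cancellation), converts one power $n^{1/(kd)}$ into a single genuine factor of $n$. The auxiliary condition $pn \to \infty$ plays only a regularity role, e.g.\ ensuring $v(H_d) \le n/2$ for large $n$ so that the falling factorial may be replaced by $(n/2)^{kd}$ in the very first step.
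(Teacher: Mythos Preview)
Your approach is the same first-moment calculation as the paper's, but you have dropped a factor of $p$. Since $e(H_d) = \lambda(H)\,kd + 1$ (not $\lambda(H)\,kd$), the probability that a given placement of $H_d$ lies in $G_{n,p}$ is $p^{\lambda(H) kd + 1}$, and your displayed lower bound should read
\[
\Ex\big(X_d(e)\big) \,\ge\, \frac{2p}{(kd+2)!}\left(\frac{np^{\lambda(H)}}{2}\right)^{kd}.
\]
Carrying this through your Stirling calculation, the final bound becomes $\Ex(X_d(e)) \ge c_H \cdot pn$, not $c_H \cdot n$. So the hypothesis $pn \to \infty$ is \emph{not} merely a regularity condition as you assert in your last paragraph: it is exactly what closes the argument, and this is how the paper uses it as well (the paper obtains the chain $\Ex(X_d(e)) \ge p\,\omega^{kd} \ge pn \to \infty$).

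As a secondary point, the paper avoids Stirling entirely by using the elementary bound $\binom{n}{kd} \ge (n/(kd))^{kd} \ge (n/(v(H)d))^{kd}$, which already has the factorial-cancelling denominator built in and is valid for all $kd \le n$. This makes your side condition $v(H_d) \le n/2$ unnecessary; note also that your justification of that condition (``follows from $p\le 1$ and the first hypothesis'') is incomplete, since $p \le 1$ and $p^{\lambda(H)} n \ge \omega v(H) d$ only give $kd \le n/2$ once you also know $\omega \ge 2$, which is not assumed.
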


\begin{proof}
Recall that $H_d$ has $(v(H) - 2)d + 2$ vertices and $(e(H) - 2)d + 1$ edges. Thus
\begin{equation*}\label{eq:ex}
\Ex\big( X_d(e) \big) \, \ge \,  {n \choose {v(H_d)-2}} p^{e(H_d)} \, \ge \,  \left( \frac{n}{v(H)d} \right)^{(v(H) - 2)d} p^{(e(H) - 2)d + 1}.
\end{equation*}
Since $e(H) - 2 = \lambda(H) (v(H)-2)$, and using our bounds on $\omega$, it follows that
\begin{equation*}
\Ex\big( X_d(e) \big) \, \ge \,  p \left( \frac{p^{\lambda(H)} n }{v(H)d} \right)^{(v(H) - 2)d} \, \ge \,  p \cdot \omega^{(v(H) - 2)d} \, \ge \,  pn \, \to \,  \infty,
\end{equation*}
as required.
\end{proof}

We shall next bound the variance of $X_d(e)$; the following lemma is the key step.

\begin{lemma}\label{var:ext}
Let $H$ be a balanced graph, and let $d \in \N$. If $F \subsetneq H_d$ contains the endpoints of the root of $H_d$, then
$$e(F) \, \le \,  \big( v(F) - 2 \big) \lambda(H).$$
\end{lemma}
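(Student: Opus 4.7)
The plan is induction on $d$. The base case $d = 1$ is immediate: $H_1 = H - e_1$, so any proper subgraph $F \subsetneq H_1$ containing the endpoints of $e_1$ yields $F + e_1 \subsetneq H$ with the same vertex set and one more edge, whence the balanced condition (or, if $v(F) = 2$, the trivial observation that $e(F) = 0$) gives the claim.

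For the inductive step, the key structural observation is the edge partition
\[
E(H_d) \,=\, E(H_d[V_1 \cup \cdots \cup V_{d-1}]) \,\sqcup\, E(H_d[V_d]),
\]
together with the identifications $H_d[V_1 \cup \cdots \cup V_{d-1}] = H_{d-1} - e_d$ (where $e_d$ is viewed as an edge of $H^{(d-1)}$ spanning the bridge $V_{d-1} \cap V_d$) and $H_d[V_d] = H^{(d)} - e_d$. I will set $F_d := F[V(F) \cap V_d]$, $F' := F[V(F) \cap (V_1 \cup \cdots \cup V_{d-1})]$, and $s := |V(F) \cap V_{d-1} \cap V_d|$, so that $v(F) = v(F') + v(F_d) - s$ and $e(F) = e(F') + e(F_d)$. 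If $V(F) \cap (V_d \setminus V_{d-1}) = \emptyset$, then $F \subseteq H_{d-1} - e_d \subsetneq H_{d-1}$ and the induction hypothesis closes the case. Otherwise, applying induction to the proper subgraph $F' \subsetneq H_{d-1}$ (which contains the root endpoints since $e_1$ lies in $V_1$) yields $e(F') \le (v(F') - 2)\lambda(H)$, and the task reduces to showing $e(F_d) \le (v(F_d) - s)\lambda(H)$.

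When $F_d$ is a proper subgraph of $H_d[V_d]$, this bound follows by applying the balanced condition inside the copy $H^{(d)}$: if $s = 2$, apply it to the proper subgraph $F_d + e_d$ of $H^{(d)}$; if $s \le 1$, apply it directly to $F_d \subsetneq H^{(d)}$ and use $\lambda(H) \ge 2$ (a consequence of $e(H) \ge 2v(H) - 2$) to absorb the resulting $+1$ into $(2-s)\lambda(H)$. The handful of small cases where $v(F_d) \le 2$ or $v(F') = 2$ are settled by direct inspection, noting $e(F_d) \le 1$ in the former.

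The delicate case, and the main obstacle, is when $F_d = H_d[V_d]$ in full, for then $e(F_d) = e(H) - 1 = (v(F_d) - 2)\lambda(H) + 1$ exceeds the target by exactly one. The resolution exploits the asymmetry between $H_d$ and $H_{d-1}$: since $F_d$ is full, $V_d \subseteq V(F)$, so $s = 2$ and both endpoints of $e_d$ lie in $V(F')$; moreover, since $F \ne H_d$, the graph $F' + e_d$ is a proper subgraph of $H_{d-1}$. Applying the induction hypothesis to $F' + e_d$ gives the sharpened bound $e(F') + 1 \le (v(F') - 2)\lambda(H)$, and the unit of slack thereby recovered precisely cancels the surplus in $e(F_d)$, yielding $e(F) \le (v(F) - 2)\lambda(H)$ as required. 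In effect, the surplus edge $e_d$ is transferred across the partition so that the balanced condition is applied tightly on the $H_{d-1}$ side rather than inside the last copy.
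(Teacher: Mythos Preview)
Your proof is correct, and it takes a genuinely different route from the paper's argument.

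The paper's proof inducts on $d$ but, in the inductive step, analyses all $d$ copies simultaneously: it first looks for some index $j$ with $v(F_j) \le 1$ and splits $F$ there; if no such $j$ exists, it treats the case $V(F) = V(H_d)$ separately, and otherwise runs a global summation over all $j$, tracking indicator variables $\1[E_j]$ for the events that both bridge vertices of $V_j \cap V_{j+1}$ lie in $F$, and combining the balanced inequalities at every copy at once. This yields the result but is somewhat intricate.

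Your argument is cleaner: you always peel off the last copy, writing $H_d$ as $(H_{d-1} - e_d)$ glued to $(H^{(d)} - e_d)$ along two vertices, and reduce to the induction hypothesis on $H_{d-1}$ together with a single application of the balanced condition inside $H^{(d)}$. The one place where the bound from the last copy overshoots by $1$ --- namely when $F_d$ is all of $H_d[V_d]$ --- you recover exactly by the trick of adding $e_d$ back to $F'$ and applying the induction hypothesis to $F' + e_d \subsetneq H_{d-1}$ instead of to $F'$. In effect you shift the edge $e_d$ from the overfull side to the underfull side. This avoids the global bookkeeping with $\1[E_j]$ entirely.

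What the paper's approach buys is perhaps a more symmetric view of the structure of $H_d$; what yours buys is brevity and a transparent induction. Both rely on the same ingredients (the balanced inequality and $\lambda(H) \ge 2$), so neither is more general, but yours is the more economical argument.
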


\begin{proof}
We shall use induction on $d$. The case $d = 1$ is trivial, since $F$ contains the endpoints of the root of $H_d$, so either $v(F) = 2$ and $e(F) = 0$, or $v(F) \ge 3$ and $F + e \subsetneq H$, in which case the bound follows by Definition~\ref{def:bal}. Let $d \ge 2$, and assume that the result holds for every $d' < d$. For each $j \in [d]$, let $F_j = H^{(j)}[V(F) \cap V_j]$ be the subgraph of $H^{(j)}$ induced by the vertices of $F$.

Suppose first that $v(F_j) \le 1$ for some $j \in [d]$, and let $F'$ and $F''$ be the subgraphs of $H_d$ induced by $V(F) \cap \big( V_1 \cup \ldots \cup V_{j-1} \big)$ and $V(F) \cap \big( V_{j+1} \cup \ldots \cup V_{d} \big)$, respectively. Applying the induction hypothesis to $F'$, we see that
$$e(F) \, = \,  e(F') + e(F'') \, \le \,  \big( v(F') - 2 \big) \lambda(H) + e(F''),$$
so it will suffice to prove that $e(F'') \le v(F'') \lambda(H)$. Now, applying the induction hypothesis to $F^*$, the subgraph of $H_d$ induced by $V(F'') \cup (V_j \cap V_{j+1})$, we get either
$$e( F^* ) \, \le \,  \big( v(F^*) - 2 \big) \lambda(H) \, \le \,  v(F'') \lambda(H)$$
as required, or $V(F^*) = V_{j+1} \cup \ldots \cup V_d$. But in the latter case $e(F^*) - e(F'') \ge 1$, since $v(F_j) \le 1$ implies that $v(F^*) > v(F'')$, and $\delta(H) \ge 2$ (since $H$ is balanced), so the new vertex adds at least one new edge. It follows that
$$e(F'') \, \le \,  e(F^*) - 1 \, = \,  \big( v(F^*) - 2 \big) \lambda(H) \, \le \,  v(F'') \lambda(H),$$
as required. Hence we may assume that $v(F_j) \ge 2$ for every $j \in [d]$.

Next, suppose that $V(F) = V(H_d)$. In this case the lemma is easy, since $F \neq H_d$ (by assumption), and so
$$e(F) \, \le \, e(H_d) - 1 \, = \, \big( e(H) - 2 \big) d \, = \, \big( v(H) - 2 \big) \lambda(H) \cdot d \, = \, \big( v(F) - 2 \big) \lambda(H)$$
as required, since $v(F) - 2 = v(H_d) - 2 = (v(H) - 2)d$.

Thus we may assume that $v(F) < v(H_d)$, and that $v(F_j) \ge 2$ for every $j \in [d]$. Now, for each $j \in [d-1]$, let $E_j$ denote the event that $V_j \cap V_{j+1} \subset V(F)$, and let $\1[\cdot]$ denote the indicator function. Then, recalling that $F_j = H^{(j)}[V_j \cap V(F)]$, we have
$$e(F) \, \le \,  \bigg( \sum_{j=1}^d e(F_j) \bigg) - 1 - 2\sum_{j=1}^{d-1} \1[E_j],$$
by the definition of $H_d$, and since $F$ contains the endpoints of the root of $H_d$. We next claim that, since $H$ is balanced and $v(F) < v(H_d)$, it follows that
\begin{eqnarray*}
e(F) & \le & \sum_{j=1}^d \Big( \big( v(F_j) - 2 \big) \lambda(H) + 2 \Big) - 2 - 2 \sum_{j=1}^{d-1} \1[E_j]\\
& = & \bigg( \sum_{j=1}^d v(F_j) - 2d \bigg) \lambda(H) \,+\, \big( 2d - 2 \big) \,-\, 2 \sum_{j=1}^{d-1} \1[E_j].
\end{eqnarray*}
To see this, observe that $e(F_j) \le \big( v(F_j) - 2 \big) \lambda(H) + 1$ holds for every $F_j \subsetneq H$ with $v(F_j) \ge 2$, by Definition~\ref{def:bal}, and that $F_j \neq H$ for some $j \in [d]$, since $v(F) < v(H_d)$.

Finally, observe that
$$v(F) \, \ge \,  \sum_{j=1}^d v(F_j) - (d-1) - \sum_{j=1}^{d-1} \1[E_j],$$
and so
\begin{eqnarray*}
e(F) \, \le \,  \left(v(F) - d - 1 + \sum_{j=1}^{d-1} \1[E_j] \right) \lambda(H) \,+\, \big( 2d - 2 \big) \,-\, 2 \sum_{j=1}^{d-1} \1[E_j].
\end{eqnarray*}
But
$$\left( d - 1 - \sum_{j=1}^{d-1} \1[E_j]  \right) \lambda(H) \, \ge \,  2d - 2 - 2 \sum_{j=1}^{d-1} \1[E_j],$$
since $\lambda(H) \ge 2$, by Definition~\ref{def:bal}. Hence
\begin{eqnarray*}
e(F) \, \le \,  \big( v(F) - 2 \big) \lambda(H)
\end{eqnarray*}
for every $F \subsetneq H$, as required.
\end{proof}

It is now straightforward to deduce the required bound on the variance of the counting function $X_d(e)$.

\begin{lemma}\label{varX}
Let $H$ be a balanced graph, and $e \in E(K_n)$. If $p = p(n)$ and $d = d(n)$ are chosen so that $v(H_d)^{-2} p^{\lambda(H)} n \to \infty$ as $n \to \infty$, then
$$\frac{\Var\big( X_d(e) \big)}{\Ex\big( X_d(e) \big)^2} \, \to \,  0$$
as $n \to \infty$.
\end{lemma}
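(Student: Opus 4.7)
The plan is to apply the second moment method. Write $X_d(e) = \sum_\sigma Y_\sigma$, where $\sigma$ ranges over the rooted copies of $H_d$ in $K_n$ with root at $e$ and $Y_\sigma = \1[\sigma \subseteq G_{n,p}]$. Since $Y_\sigma$ and $Y_\tau$ are independent whenever $E(\sigma) \cap E(\tau) = \emptyset$, we get
$$\Var\big(X_d(e)\big) \,\le\, \Ex\big(X_d(e)\big) \,+\, \sum_{\substack{\sigma \ne \tau \\ e(\sigma \cap \tau) \ge 1}} p^{2e(H_d) - e(\sigma \cap \tau)},$$
and the diagonal term contributes $o(1)$ to $\Var/\Ex^2$ by Lemma~\ref{expX}, since the hypothesis $v(H_d)^{-2} np^{\lambda(H)} \to \infty$ forces $\Ex(X_d(e)) \to \infty$.

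For the double sum, I would group pairs by the isomorphism type of the intersection $F = \sigma \cap \tau$, viewed as a subgraph of $H_d$ containing both endpoints of the root. Writing $v(F) = s + 2$ with $s \ge 1$, a direct count shows that the number of pairs with this intersection type is at most $\binom{v(H_d)-2}{s}$ (choices of the non-root vertices of $\sigma$ shared with $\tau$) times a quantity of order $n^{2v(H_d)-4-s}$ (choices of the remaining vertices of $\sigma$ and $\tau$), up to $H_d$-dependent constants, and each such pair contributes $p^{2e(H_d) - e(F)}$. Applying Lemma~\ref{var:ext} to the \emph{proper} subgraph $F \subsetneq H_d$ gives $e(F) \le (v(F) - 2)\lambda(H) = s\lambda(H)$, so after dividing by $\Ex(X_d(e))^2$, of order $n^{2v(H_d)-4} p^{2e(H_d)}$, the contribution of each type reduces to $(np^{\lambda(H)})^{-s}$ times combinatorial factors. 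Bounding the number of subgraphs of $H_d$ on $s+2$ vertices containing the root by $(Cv(H_d))^s$ for some $C = C(H)$ (using that $H_d$ has maximum degree at most $2\Delta(H)$, so the available edge sets on any $s$-vertex subset number at most $2^{O(s)}$), the total contribution from this range is dominated by a geometric series in $s \ge 1$ with ratio of order $v(H_d)/np^{\lambda(H)}$.

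The geometric series tends to zero under the hypothesis, since $v(H_d)^{-2} np^{\lambda(H)} \to \infty$ implies in particular $v(H_d)/np^{\lambda(H)} \to 0$. The boundary case $v(F) = v(H_d)$ (where $\sigma$ and $\tau$ share all vertices but differ as embeddings) needs to be isolated, since then $F$ is a subgraph of $H_d$ on the full vertex set rather than a ``small'' piece: here Lemma~\ref{var:ext} still gives $e(F) \le (v(H_d)-2)\lambda(H) = e(H_d)-1$, and a direct estimate, using $v(H_d)! \le v(H_d)^{v(H_d)}$ and the full strength of $np^{\lambda(H)} \gg v(H_d)^2$, shows that this contribution also vanishes. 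The main obstacle throughout is the combinatorial bookkeeping: because $v(H_d)$ grows with $n$, one cannot absorb its dependence into an $H$-dependent constant, and the quadratic slack in the hypothesis $v(H_d)^{-2} np^{\lambda(H)} \to \infty$ is exactly what is needed to defeat the polynomial-in-$v(H_d)$ losses at each step.
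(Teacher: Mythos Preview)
Your approach is essentially the paper's: second moment, group pairs of rooted copies by the size of their overlap, invoke Lemma~\ref{var:ext} to cap the number of shared edges, and sum a geometric series. Two points of cleanup are worth noting.

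First, grouping by the \emph{isomorphism type} of $F = \sigma \cap \tau$ is more than you need. Lemma~\ref{var:ext} bounds $e(F)$ purely in terms of $v(F)$, so it suffices to stratify by $m = |V(\sigma) \cap V(\tau)| - 2$ alone. The paper does this: with $k = v(H_d)$, it bounds
\[
\frac{\Var(X_d(e))}{\Ex(X_d(e))^2} \,\le\, \sum_{m=1}^{k-2} \frac{\binom{n}{m}\binom{n}{k-m-2}^2}{\binom{n}{k-2}^2}\, p^{-\lambda(H)m} \,\le\, \sum_{m=1}^{k-2} \left(\frac{4k^2}{n p^{\lambda(H)}}\right)^{m},
\]
and this single estimate already covers your ``boundary case'' $v(F) = v(H_d)$ (with $\sigma \ne \tau$) and makes the separate count of subgraphs of $H_d$ on $s+2$ vertices unnecessary. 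Note the ratio is $\Theta(k^2/np^{\lambda(H)})$, not $\Theta(k/np^{\lambda(H)})$: your sketch picks up one factor of $k$ from $\binom{k-2}{s}$, but there is a second factor of $k$ coming from the number of ways the $s$ shared vertices can sit inside $\tau$ (equivalently, from the ratio $(k-2)!^2/(k-m-2)!^2$ in the display above). Your final sentence about the ``quadratic slack'' is correct; your earlier claim that the ratio is of order $v(H_d)/np^{\lambda(H)}$ undercounts by one power of $v(H_d)$.

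Second, appealing to Lemma~\ref{expX} to dispose of the diagonal term is not quite licit, since the hypotheses of Lemma~\ref{expX} (in particular $\omega^{(v(H)-2)d} \ge n$) are not implied by $v(H_d)^{-2} np^{\lambda(H)} \to \infty$ alone. In the paper's treatment the diagonal is simply absorbed into the $m = k-2$ term of the sum above; alternatively one checks directly that $1/\Ex(X_d(e)) \le p^{-1}(k/np^{\lambda(H)})^{k-2} \to 0$ under the hypothesis.
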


\begin{proof}
Let $\ell(H_d)$ denote the number of copies of $H_d$, rooted at $e$, which have the same vertex set. Then
$$\Ex\big( X_d(e) \big) \, = \,  {n \choose {v(H_d)-2}} \ell(H_d) \cdot p^{e(H_d)}.$$
Moreover, we claim that
\begin{equation}\label{eq:var}
\Var\big( X_d(e) \big) \, \le \,  \sum_{m=1}^{v(H_d)-2} \ell(H_d)^2 {n \choose m} {n \choose {v(H_d)-m-2}}^2 p^{2e(H_d) - \lambda(H) m}.
\end{equation}
To see this, we simply count (ordered) pairs $(A,B)$, where $A$ and $B$ are copies of $H_d$ in $G_{n,p}$ with root $e$. Let $F = H_d[A \cap B]$ and $m = |V(A) \cap V(B)| - 2$, so $F$ is the intersection of the edge sets of $A$ and $B$, and $m$ is the number of vertices in their intersection, not counting the endpoints of $e$. Note that we expect at most $\Ex\big( X_d(e) \big)^2$ such pairs $(A,B)$ with $m = 0$.

By Lemma~\ref{var:ext}, if $A \neq B$ then $e(F) \le \lambda(H) m$, and so $e(A \cup B) \ge 2e(H_d) - \lambda(H) m$. Moreover, given $m$, there are at most
$$\ell(H_d)^2 {n \choose m} {n \choose {v(H_d)-m-2}}^2$$
choices for $A$ and $B$. This proves~\eqref{eq:var}.

Combining the bounds above, and setting $k = v(H_d)$, it follows that $\Var( X_d(e))\big/\Ex( X_d(e))^2$ is at most
$$\sum_{m=1}^{k-2} \frac{{n \choose m} {n \choose {k-m-2}}^2}{{n \choose {k-2}}^2} \cdot p^{ - \lambda(H) m} \, \le \, \sum_{m=1}^{k-2} \frac{n^m}{m!} \cdot \frac{(k-2)!^2}{(k-m-2)!^2} \cdot \left( \frac{1}{n-k} \right)^{2m} \cdot p^{ - \lambda(H) m}.$$
Since $k = o(n)$ (by assumption), it follows that
$$\frac{\Var\big( X_d(e) \big)}{\Ex\big( X_d(e) \big)^2} \, \le \, \sum_{m=1}^{k-2} 2^m \cdot m! {k-2 \choose m}^2 \left( \frac{2}{p^{\lambda(H)} n} \right)^{m} \, \le \, \sum_{m=1}^{k-2}  \left( \frac{4k^2}{p^{\lambda(H)} n} \right)^{m} \, \to \, 0$$
as $n \to \infty$, as required.
\end{proof}

We can now deduce Proposition~\ref{prop:bal} using Chebyshev's inequality and sprinkling.

\begin{proof}[Proof of Proposition~\ref{prop:bal}]
Let $H$ be a balanced graph, suppose that $p \gg \left( \frac{\log n}{\log\log n} \right)^{2 / \lambda(H)} n^{-1 / \lambda(H)}$, and let
$$d(n) \,=\, \left\lfloor \frac{\log n}{\log \log n} \right\rfloor.$$
We claim that $p(n)$ and $d(n)$ satisfy the conditions of Lemmas~\ref{expX} and~\ref{varX}. Indeed, setting $\omega(n) = d(n)$ we have $p^{\lambda(H)} n \gg \omega d$ and $\omega^{2d} \gg n$, so Lemma~\ref{expX} holds, and $p^{\lambda(H)} n \gg d^2$, so Lemma~\ref{varX} holds. Thus, by Chebyshev's inequality,
$$\Pr\big( X_d(e) = 0 \big) \, \le \, \frac{\Var\big( X_d(e) \big)}{\Ex\big( X_d(e) \big)^2} \, \to \, 0$$
as $n \to \infty$. Moreover, if $X_d(e) \neq 0$ then $e \in \< G_{n,p} \>_H$, since if $e$ is the root of some copy of $H_d$ then it is infected after at most $d$ steps of the $H$-process. Hence, by Markov's inequality, if $p^{\lambda(H)} n \gg \left( \frac{\log n}{\log\log n} \right)^2$ then, with high probability, all but $o(n^2)$ edges of $K_n$ are infected in the $H$-process on $G_{n,p}$.

To finish the proof, we shall show that by sprinkling $O(n \log n)$ extra edges, we shall infect all of the remaining edges, with high probability. We use the following easy claim.

\medskip
\noindent \textbf{Claim:} If $v(G) = n$ and $e(G) \ge {n \choose 2} - o(n^2)$, then there is a clique of size $n - o(n)$ in $\< G \>_H$.

\begin{proof}[Proof of Claim]
Let $0 < c < 1/2$ be arbitrary, and let
$$D \, := \, \big\{ x \in V(G) \,:\, d_G(x) > (1-c)n \big\}.$$
By our assumption, $|D| = n - o(n)$; we claim that $D$ is a clique in $\< G \>_H$.

Indeed, if $x,y \in D$ then by Tur\'an's Theorem there is a $(v(H)-2)$-clique in $N_G(x) \cap N_G(y)$, since $|N_G(x) \cap N_G(y)| \ge (1 - 2c) n$ and $o(n^2)$ edges are missing. But then $xy \in \< G \>_H$, and so $D$ is a clique of size $n - o(n)$ in $\< G \>_H$, as claimed.
\end{proof}

Continuing our proof of Proposition~\ref{prop:bal},
let us sprinkle edges with density $p$; that is, let us take a second copy of $G_{n,p}$ and consider the union of the two random graphs. We obtain a random graph $G_{n,p^*}$ of density $p^* = 1 - (1-p)^2 < 2p$. Let $K$ be the clique found in the claim, and observe that if every vertex outside $K$ has at least $v(H) - 1$ neighbours in $K$ (in the second copy of $G_{n,p}$) then $G_{n,p^*}$ will percolate. Since $pn \gg \log n$, this occurs with high probability, and hence
$$p_c(n,H) \, \le \,  C \left( \frac{\log n}{\log \log n} \right)^{2 / \lambda(H)} n^{-1 / \lambda(H)},$$
if $C = C(H)$ is sufficiently large, as required.
\end{proof}

\section{Lower bound for $K_r$-percolation}\label{lowersec}

In this section we shall prove the following proposition, which shows that, if $r \ge 4$ and $(p \log n)^{\lambda(r)} n = o(1)$, then with high probability $o(n^2)$ edges are infected in the $K_r$-bootstrap process with initial set $G_{n,p}$.

\begin{prop}\label{prop:lower}
Let $r \ge 4$, and let $e \in E(K_n)$. If $pn^{1/\lambda(r)} \log n \le 1/(2e)$, then
$$\Pr\Big( e \in \< G_{n,p} \>_{K_r} \Big) \,\to\, 0$$
as $n \to \infty$.
\end{prop}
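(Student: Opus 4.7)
The plan is to prove Proposition~\ref{prop:lower} by a first-moment argument over minimal witness graphs, leveraging the extremal input flagged in the introduction (Lemma~\ref{extremal}).

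Call a graph $G \subset K_n$ containing both endpoints of $e$ an \emph{$e$-witness} if $e \in \< G \>_{K_r}$, and \emph{minimal} if no proper subgraph of $G$ is an $e$-witness. The event $\{ e \in \< G_{n,p} \>_{K_r} \}$ holds if and only if $G_{n,p}$ contains a minimal $e$-witness as a subgraph, so a union bound gives
$$ \Pr\big( e \in \< G_{n,p} \>_{K_r} \big) \; \le \; \sum_{k \ge 0} \binom{n-2}{k} \sum_{G} p^{e(G)}, $$
where the inner sum runs over minimal $e$-witnesses $G$ on a fixed labeled $(k+2)$-vertex set that contains both endpoints of $e$.

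The crucial ingredient is the extremal lemma (Lemma~\ref{extremal}), which I expect to assert that every minimal $e$-witness $G$ with $v(G) \ge 3$ satisfies
$$ e(G) \; \ge \; \lambda(r) \big( v(G) - 2 \big) + 1. $$
The graphs $H_d$ from Section~\ref{uppersec}, specialized to $H = K_r$, show that this bound is tight for every $d$. Combined with the trivial upper bound $e(G) \le C(r) \cdot v(G)$ for minimal witnesses (each of their edges must participate in at least one $K_r$-step), together with a routine count of labeled graphs on $k+2$ vertices obeying a prescribed edge-density window, the inner sum is controlled by a factor of order $\big( C'(r) \, p^{\lambda(r)} \big)^{k}$. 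This yields
$$ \Pr\big( e \in \< G_{n,p} \>_{K_r} \big) \; \le \; p \sum_{k \ge 1} \big( C''(r) \, n \, p^{\lambda(r)} \big)^{k}. $$
Under the hypothesis $p n^{1/\lambda(r)} \log n \le 1/(2e)$ one has $n p^{\lambda(r)} \le (2e \log n)^{-\lambda(r)} = o(1)$ (recall $\lambda(r) \ge r/2 \ge 2$), which easily dominates the constant $C''(r)$ and any polynomial-in-$k$ corrections arising from the counting. The geometric series is therefore $o(1)$, and the remaining prefactor $p$ also tends to $0$.

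The main obstacle, as the authors themselves emphasize, is the extremal lemma. A plausible strategy is induction on $v(G)$: find a vertex of $G$ whose removal (together with some auxiliary bookkeeping on edges near the root) yields a smaller minimal witness in a derived graph, apply the induction hypothesis, and argue that the removed piece contributes at least $\lambda(r)$ edges per new vertex. The delicate part is that minimal witnesses need not look like the chains $H_d$: several branches may be glued along the endpoints of $e$, or auxiliary cliques may appear to infect intermediate edges, so one must rule out any sparser alternative configuration. The ``$+1$'' term in the extremal bound is precisely what makes the $H_d$ construction optimal and prevents the first moment from being dominated by denser branching structures.
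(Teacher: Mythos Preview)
Your overall plan is on the right track, but there is a real gap in the first-moment step: the geometric series you write down does not actually converge for large witness sizes. When you count labeled graphs on $k+2$ vertices with at least $\lambda(r)k+1$ edges, the dominant contribution is
\[
\binom{n-2}{k}\binom{\binom{k+2}{2}}{\lceil\lambda(r)k\rceil+1}p^{\lambda(r)k+1}
\;\approx\; p\left(C(r)\,n\,p^{\lambda(r)}\,k^{\lambda(r)-1}\right)^{k},
\]
not $p\big(C''(r)\,n\,p^{\lambda(r)}\big)^{k}$. The ``polynomial-in-$k$ correction'' you dismiss is in fact a factor of $k^{\lambda(r)-1}$ \emph{inside} the $k$th power, hence super-exponential in $k$. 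Under the hypothesis $np^{\lambda(r)}\le(2e\log n)^{-\lambda(r)}$ this term is below $1$ only while $k$ is at most a fixed power of $\log n$; once $k$ exceeds that polylogarithmic threshold, the summand explodes and your union bound says nothing. (Your claimed linear upper bound $e(G)\le C(r)\,v(G)$ for minimal witnesses, even if true, does not help here.)

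This is exactly why the paper does not rely on a single global first moment. It splits into two regimes according to the size of the witness $F(e)$ produced by the Witness-Set Algorithm. When $e(F(e))\le\log n$, the first-moment bound of Lemma~\ref{expY} (essentially your computation, restricted to $m\le\binom{r}{2}\log n$) already gives $o(1)$. When $e(F(e))\ge\log n$, the paper invokes an Aizenman--Lebowitz-style observation (Lemma~\ref{ALlem}): since $e(F(\cdot))$ grows by at most a factor $\binom{r}{2}$ at each step of the algorithm, some edge $f$ has $\log n\le e(F(f))\le\binom{r}{2}\log n$. One then union-bounds over all $\binom{n}{2}$ choices of $f$, and for each $f$ the first-moment bound at scale $m\approx\log n$ gives a term decaying like $(2/r^{2})^{\log n}$, easily beating the $n^{2}$ prefactor. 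Without this two-scale argument your proof does not close; you should either incorporate Lemma~\ref{ALlem} or find an alternative way to rule out large witnesses.
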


The idea of the proof is as follows. If $e \in \< G \>_{K_r}$ for some graph $G$, then there must exist a `witness set' of edges of $G$ which caused $e$ to be infected. We shall describe an algorithm which finds such a set $F = F(e)$ of edges, and show that this set has two useful properties:\smallskip
\begin{itemize}
\item[$(a)$] $e(F) \ge \lambda(r) \big( v(F) - 2 \big) + 1$ (see Lemma~\ref{extremal}).\\[-1.5ex]
\item[$(b)$] If $e(F) \ge {r \choose 2} L$, then $L \le e(F(f)) \le {r \choose 2}L$ for some $f \in \< G \>_{K_r}$ (see Lemma~\ref{ALlem}).
\end{itemize}
\smallskip
Property $(a)$ will allow us to bound the expected number of such sets when $G = G_{n,p}$ and $e(F) = O(\log n)$; combining it with property $(b)$ will allow us to do so when $e(F)$ is larger than this.

\subsection{Extremal results}

Let $r \ge 4$ be fixed for the remainder of this section, and let $G$ be an arbitrary graph. We begin by describing the algorithm which finds $F(e)$.

\begin{WSA}
We assign a graph $F = F(e) \subset G$ to each edge $e \in \< G \>_{K_r}$ as follows:
\begin{enumerate}
\item[1.] If $e \in G$ then set $F(e) = \{e\}$.
\item[2.] Choose an order in which to infect the edges of $\< G \>_{K_r}$, and at each step identify which $r$-clique was completed (if more than one is completed then choose one).
\item[3.] Infect the edges one by one. If $e$ is infected by the $r$-clique $K$, then set
$$F(e) \, := \, \bigcup_{e \ne e' \in K} F(e').$$
\end{enumerate}
We call the graph $F(e)$ a \emph{witness set} for the event $e \in \< G \>_{K_r}$.
\end{WSA}

Since every $e \ne e' \in K$ is either in $G$, or was infected earlier in the process, the algorithm is well-defined. Note that the graphs $F(e)$ depend on the order in which we chose to infect the edges (that is, they depend on Step 2 of the algorithm); the results below hold for every possible such choice.

We shall say that a graph $F$ is an $r$-\emph{witness set} if there exists a graph $G$, an edge $e$, and a realization of the Witness-Set Algorithm (i.e., a choice as in Step 2) such that $F = F(e)$. The key lemma in the proof of Proposition~\ref{prop:lower} is the following extremal result.

\begin{lemma}\label{extremal}
Let $F$ be a graph and $r \ge 4$, and suppose that $F$ is an $r$-witness set. Then
$$e(F) \, \ge \,  \lambda(r) \big( v(F) - 2 \big) + 1.$$
\end{lemma}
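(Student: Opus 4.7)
My plan is an induction on the number $N$ of infection steps used by the Witness-Set Algorithm to produce $F = F(e)$. The base case $N = 0$ is $F = \{e\}$, which satisfies the inequality with equality. For the inductive step, suppose $e$ is infected at the final step by an $r$-clique $K$ on $\{v_1, \ldots, v_r\}$ with $e = v_1v_2$; then $F = \bigcup_{i=1}^{m} F_i$, where $m = \binom{r}{2} - 1$ and each $F_i$ is the witness set for one of the other edges $e_i$ of $K$, produced by WSA in strictly fewer steps and therefore satisfying $e(F_i) \geq \lambda(r)(v(F_i) - 2) + 1$ by induction.

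Next I would introduce the vertex and edge repetition counts
\[
R := \sum_{i=1}^m v(F_i) - v(F), \qquad S := \sum_{i=1}^m e(F_i) - e(F).
\]
Summing the inductive bounds and substituting $e(F) = \sum e(F_i) - S$ and $v(F) = \sum v(F_i) - R$ reduces the desired inequality for $F$ to the single algebraic statement
\[
\lambda(r)\,R - S \;\geq\; (m-1)\bigl(2\lambda(r) - 1\bigr) \;=\; \lambda(r)(r^2 - 2r - 2).
\]
Writing $R_0 := r^2 - 2r - 2$, this becomes $\lambda(r)(R - R_0) \geq S$. The easy half, $R \geq R_0$, follows from the clique structure: since $V(e_i) \subseteq V(F_i)$ for every $i$, each vertex of $V(K)$ appears as a root-endpoint of $r-2$ of the $F_i$'s (for $v_1, v_2$) or $r-1$ of them (for $v_k$ with $k \geq 3$), giving $\sum_{v \in V(K)} n(v) \geq 2(r-2) + (r-2)(r-1) = r^2 - r - 2$ and hence $R \geq R_0$. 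When $S = 0$ this already closes the argument, and the $H_d$ construction of Section~\ref{uppersec} shows the inequality is then tight.

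The main obstacle will be controlling $S$ by $R - R_0$ when $S > 0$. I would analyse shared edges $e' = uw \in E(F_i) \cap E(F_j)$ by cases. When at least one endpoint of $e'$ lies outside $V(K)$, that endpoint has zero baseline multiplicity, so each increment of $\mu(e')$ forces an increment in $R - R_0$ via $n(u), n(w) \geq \mu(e')$; this case produces enough excess vertex repetition to absorb the edge repetition. The delicate case is $V(e') \subseteq V(K)$, where the baseline multiplicities may already be saturated. Here I expect to use the exact identity
\[
\Phi(F) \;=\; \sum_{i=1}^m \Phi(F_i) \;+\; \lambda(r)\,R \;-\; S \;-\; 2\lambda(r)(m-1),
\]
where $\Phi(G) := e(G) - \lambda(r)(v(G) - 2)$, together with the observation that if some $F_i$ already contains many edges of $K$ on $V(K) \cap V(F_i)$, then $F_i$ must deviate from the extremal case and the inductive bound yields strict slack $\Phi(F_i) > 1$; this slack is traded term-for-term against the shortfall in $\lambda(r)(R - R_0) - S$. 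The heart of the argument, and the main source of delicacy, will be the bookkeeping showing that every $V(K)$-internal shared edge is either compensated by excess $V(K)$-vertex multiplicity above baseline or by strict slack in the inductive hypothesis for a suitable $F_i$, so that the reduction $\lambda(r)(R - R_0) \geq S$ holds across all configurations.
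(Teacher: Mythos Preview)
Your algebraic reduction to the inequality $\lambda(r)(R - R_0) \ge S$ is correct, as is the baseline bound $R \ge R_0$. But the proposal stops precisely at the hard part, and the two mechanisms you offer for closing the gap both fail as stated.

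For the ``easy'' case (a shared edge $e'=uw$ with $u\notin V(K)$), your charging argument double-counts. The vertex $u$ contributes $n(u)-1$ to $R-R_0$ \emph{once}, not once per shared edge incident to it. Concretely, if two of the $F_i$ share a clique on $k$ vertices all lying outside $V(K)$, then $S$ gains $\binom{k}{2}$ from these edges while $R-R_0$ gains only $k$ from these vertices; for $k>2\lambda(r)+1$ this already violates $\lambda(r)(R-R_0)\ge S$, and nothing you have said rules it out. So the outside-$V(K)$ case is not handled by the edge-by-edge argument you describe.

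For the $V(K)$-internal case, the ``strict slack'' claim $\Phi(F_i)>1$ is not supplied by your inductive hypothesis, which only gives $\Phi(F_i)\ge 1$. You would need either a separate structural lemma saying that a witness set containing extra $K$-edges necessarily has excess $\Phi$, or a strengthened inductive statement carrying that information. You have provided neither, and the extremal examples do not make such a lemma obvious.

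This is exactly why the paper abandons the top-down decomposition $F(e)=\bigcup_i F(e_i)$ and proceeds differently. It runs a bottom-up induction via the Red Edge Algorithm, adding the cliques $K^{(1)},\dots,K^{(m)}$ one at a time and tracking two auxiliary parameters (the number $\ell$ of components of the clique-intersection graph, and a multiplicity count $k$). The strengthened inequality of Lemma~\ref{tech}, involving $\ell$ and $k$, is what survives the induction; the delicate step where a new clique merges several components is handled by a separate combinatorial lemma on double covers (Lemmas~\ref{odd2}--\ref{odd1}). The need to load the hypothesis with extra parameters is exactly the obstruction you encountered: the bare bound $\Phi\ge 1$ on sub-witness sets is too coarse to control their overlaps.
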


We shall prove Lemma~\ref{extremal} using induction; in order to do so, we shall need to state a more general version of it (see Lemma~\ref{tech}, below). The statement is slightly technical, and we shall need some preparatory definitions. We shall use the following algorithm, which is simply a restatement of the Witness-Set Algorithm.

\begin{REA}
Let $G$ be a graph, let $r \ge 4$, and let $e \in \<G\>_{K_r} \setminus G$.
\begin{enumerate}
\item[1.] Run the Witness-Set Algorithm until edge $e$ is infected.
\item[2.] Let $(e_1,e_2,\ldots,e_m)$ be the infected edges which satisfy $F(e_j) \subset F(e)$ and $e_j \not\in G$, written in the order in which they are infected, where $e_m = e$.
\item[3.] For each $1 \le j \le m$, let $K^{(j)}$ be the $r$-clique which is completed by $e_j$.
\item[4.] Colour the edges $\{e_1,\ldots,e_m\}$ red, and note that $e_j \in K^{(j)} \setminus \big( K^{(1)} \cup \ldots \cup K^{(j-1)} \big)$.
\end{enumerate}
\end{REA}

The key observation is that $F(e) = \big( K^{(1)} \cup \ldots \cup K^{(m)} \big) \setminus \big\{ e_1, \ldots, e_m \big\}$, or, in words, $F(e)$ consists of all the non-red edges of the cliques which led to its infection. Indeed, the red edges were infected during the process, and so cannot be in $F(e)$; on the other hand, for each $1 \le j \le m$ the condition $F(e_j) \subset F(e)$ implies that $K^{(j)} \setminus \big\{ e_1, \ldots, e_m \big\} \subset F(f)$ for some $f \in K^{(m)}$. The reader should think about the Red Edge Algorithm in the following way: at each step an $r$-clique is added, and one of the \emph{new} edges of this clique is coloured red.

We shall bound the number of non-red edges after $t$ steps of the Red Edge Algorithm. Thus, given a realization of the algorithm and $t \in [m]$, define
$$B_t \, := \, \big( K^{(1)} \cup \ldots \cup K^{(t)} \big) \setminus \big\{ e_1, \ldots, e_t \big\}.$$
Note that $B_t \neq F(e_t)$ in general, since the condition $F(e_j) \subset F(e)$ for each $j \in [m]$ does not imply that $F(e_i) \subset F(e_t)$ for every $i \in [t]$. In order to state Lemma~\ref{tech}, we need to define two more parameters of the model, which will both play a key role in the induction step.

\begin{defn}
Let $\G_t$ denote the graph, obtained using the Red Edge Algorithm, whose vertices are the cliques $\big\{ K^{(1)}, \ldots, K^{(t)} \big\}$, and in which two cliques are adjacent if they share at least two vertices.

Let $\ell = \ell_t$ denote the number of components of $\G_t$, let $c(v) = c_t(v)$ denote the number of components of $\G_t$ containing the vertex $v \in V(G)$, and set
$$k \,=\, k_t \,=\, \ds\sum_{v \in V(B_t)} \big( c_t(v) - 1 \big).$$
Here, and throughout, we treat components of $\G_t$ as subsets of $V(G)$, and trust that this will not cause confusion.
\end{defn}

The following lemma implies Lemma~\ref{extremal}, since the graph $\G_m$ is connected (see below), and so $\ell_m = 1$ and hence $k_m = 0$.

\begin{lemma}\label{tech}
$e(B_t) \, \ge \,  \bigg( \ds\frac{{r \choose 2} - 2}{r-2} \bigg)\Big( v(B_t) + k - \ell r \Big)  + \ell \left( \ds{r \choose 2} - 1 \right).$
\end{lemma}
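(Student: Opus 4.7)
The plan is to prove Lemma~\ref{tech} by induction on $t$. Before starting the induction, it is useful to rewrite the right-hand side in a form better suited to tracking how each new clique alters the estimate. Since $\binom{r}{2}-1 = \lambda(r)(r-2)+1$ and $\sum_C |V(C)| = v(B_t) + k_t$ (where the sum runs over components $C$ of $\G_t$), the bound we must prove is equivalent to
\[
e(B_t) \,\ge\, \lambda(r) \sum_{C} \big( |V(C)| - 2 \big) \,+\, \ell_t.
\]
Moreover, since two cliques that share an edge share at least two vertices, the edge sets of distinct components of $\G_t$ are disjoint, so $e(B_t) = \sum_C |E(C)|$. Thus it suffices to show, per component $C$, that $|E(C)| \ge \lambda(r)(|V(C)| - 2) + 1$, and then sum.

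The base case $t = 1$ is immediate: $B_1 = K^{(1)} \setminus \{e_1\}$ has $r$ vertices and $\binom{r}{2} - 1 = \lambda(r)(r-2)+1$ edges, $\ell_1 = 1$ and $k_1 = 0$, giving equality. For the inductive step, we add $K^{(t+1)}$ with its (necessarily new) red edge $e_{t+1}$. Let $C_1,\dots,C_a$ be the components of $\G_t$ that contain some clique sharing at least two vertices with $K^{(t+1)}$; these are fused with $K^{(t+1)}$ into a single new component $C^\ast$ of $\G_{t+1}$, while all other components are unchanged. The case $a = 0$ is easy: $K^{(t+1)}$ shares no edge with any earlier clique (as that would force a shared pair of vertices), so it contributes $r$ vertices and $\binom{r}{2}-1$ edges to a fresh component, exactly matching the per-component bound. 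For $a \ge 1$, applying the inductive hypothesis to each $C_j$ and using $|E(C^\ast)| = \sum_j |E(C_j)| + (\binom{r}{2}-1) - (x+y)$ (where $x+y$ counts edges of $K^{(t+1)}$ already lying in $K^{(1)} \cup \dots \cup K^{(t)}$, all of which must reside in some $C_j$), the desired inequality reduces after algebra to the combinatorial bound
\[
x + y \,\le\, \lambda(r)\big( s' + R - 2a \big) + a,
\]
where $s' = |V(K^{(t+1)}) \cap (V(C_1) \cup \dots \cup V(C_a))|$ and $R = \sum_j |V(C_j)| - |V(C_1) \cup \dots \cup V(C_a)|$ is the overcount of vertices lying in several merged components.

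The hard part will be verifying this last inequality, and this is where I expect all the difficulty of the lemma to concentrate. The three ingredients I plan to use are: (i) the balanced property of $K_r$ from Definition~\ref{def:bal}, which bounds $e(F) \le \lambda(r)(v(F)-2) + 1$ for every proper subgraph $F \subsetneq K_r$ with $v(F) \ge 3$, and which will bound the edges of $K^{(t+1)}$ falling inside a single $V(C_j)$; (ii) the fact that $e_{t+1}$ itself is not in any earlier $K^{(i)}$, providing the essential ``$-1$'' slack in the extremal case $s' = r$; and (iii) the structural constraint that each merged $C_j$ shares at least two vertices with $K^{(t+1)}$ via a single $K^{(i)}$, which ensures each $|V(K^{(t+1)}) \cap V(C_j)| \ge 2$ and lets the budget $\lambda(r)(R - 2a) + a$ absorb the contributions from multiple components. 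A short case analysis on $a$ and on the sizes $|V(K^{(t+1)}) \cap V(C_j)|$, distinguishing whether both endpoints of $e_{t+1}$ lie in the merged vertex set, should then complete the inductive step and hence the proof.
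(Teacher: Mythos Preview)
Your per-component reformulation is correct and is a genuinely cleaner way to organise the induction than the paper's bookkeeping with $v(B_t)$, $k_t$ and $\ell_t$ separately. The identity $\sum_C |V(C)| = v(B_t) + k_t$ and the edge-disjointness of components reduce the lemma exactly to the per-component bound $|E(C)| \ge \lambda(r)(|V(C)|-2)+1$, and your reduction of the inductive step to the inequality
\[
z \,\le\, \lambda(r)\big(s' + R - 2a\big) + a
\]
is also correct. Your three cases $a=0$, $a=1$, $a\ge 2$ correspond precisely to the paper's Cases~1,~2,~3 (indexed by the sign of $\ell_t - \ell_{t-1}$), and for $a\le 1$ your ingredients (i) and (ii) do suffice: in particular the extremal sub-case $s'=r$ with $a=1$ is exactly where the paper invokes the fact that $e_{t+1}$ is new.

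The gap is at $a\ge 2$. Here the inequality you need, even with the generous $R$ in place of the overcount restricted to $V(K^{(t+1)})$, is in the worst case (when the merged components overlap only inside $K^{(t+1)}$) precisely the statement of Lemma~\ref{odd2}: given the sets $A_j = V(K^{(t+1)})\cap V(C_j)$, which form a double cover of an $s'$-element set with $s'\le r$, bound the number of pairs lying in some $A_j$ by $\lambda(r)(\sum_j|A_j|-2a)+a$. This is not a ``short case analysis'': the paper proves it by a separate induction on the number of merged components, and ingredient (i) alone (the balanced inequality applied to each $A_j$ individually) overcounts pairs lying in several $A_j$'s without compensating slack. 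Ingredient (ii) is not available in general for $a\ge 2$, since the endpoints of $e_{t+1}$ need not both lie in any single $A_j$. So your outline is sound, but you should expect to prove Lemma~\ref{odd2} (or an equivalent double-cover inequality) rather than finesse it; once that is in hand, your per-component framing and the paper's argument coincide.
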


We shall prove Lemma~\ref{tech} by induction on $t$. The induction step will be relatively straightforward when $\ell_t \ge \ell_{t-1}$; when $\ell_t < \ell_{t-1}$ we shall need the following lemma.

Say that a (multi-)family of sets $\A$ is a double cover of $X$ if every element of $X$ is in at least two members of $\A$.

\begin{lemma}\label{odd2}
Let $m\ge 2$ and $r \ge 4$, and let $\A$ be a multi-family of subsets of $[m]$. If $\A$ is a double cover of $[m]$, and $|\A| \le r$, then
\begin{equation}\label{eq:odd2}
\left| \Big\{ \{A,B\} \in {\A \choose 2} \,:\, A \cap B \neq \emptyset \Big\} \right| \, \le \,  \lambda(r) \bigg( \sum_{A \in \A} |A| \,-\, 2m \bigg) \,+\, m.
\end{equation}
\end{lemma}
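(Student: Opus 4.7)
My plan is to bound the LHS of~\eqref{eq:odd2} by the simple overcount $\sum_{i \in [m]} \binom{d_i}{2}$, where $d_i$ denotes the number of sets of $\A$ containing $i$, and then reduce each summand using the balanced property of $K_r$ from Definition~\ref{def:bal}. Indeed, writing $\tau(i) = \{A \in \A : i \in A\}$ for the sub-multi-family of $\A$ consisting of the sets that contain $i$, every intersecting pair $\{A,B\}$ of $\A$ lies in $\binom{\tau(i)}{2}$ for each $i \in A \cap B$, so
$$\left|\Big\{\{A,B\}\in\binom{\A}{2} : A \cap B \ne \emptyset\Big\}\right| \,=\, \Big|\bigcup_{i\in[m]} \binom{\tau(i)}{2}\Big| \,\le\, \sum_{i=1}^{m}\binom{d_i}{2}.$$
The double-cover hypothesis gives $d_i \ge 2$, and $\sum_i d_i = \sum_{A \in \A} |A|$.

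For the main case I would assume that $d_i \le s - 1$ for every $i$, where $s := |\A| \le r$. Then each $K_{d_i}$ is a proper subgraph of $K_r$ (on at least three vertices when $d_i \ge 3$), so the balanced condition yields $\binom{d_i}{2} - 1 \le (d_i - 2)\lambda(r)$; the remaining case $d_i = 2$ is trivial, with both sides equal to $0$. Summing over $i$ then gives
$$\sum_i \binom{d_i}{2} \,\le\, m \,+\, \lambda(r)\sum_i(d_i - 2) \,=\, m \,+\, \lambda(r)\bigg(\sum_{A\in\A}|A| \,-\, 2m\bigg),$$
which is exactly the bound claimed in~\eqref{eq:odd2}.

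The main obstacle will be the boundary case in which some $d_{i_0}$ equals $s$: here $K_{d_{i_0}}$ need not be a proper subgraph of $K_r$ (in particular when $s = r$), so the per-element inequality above fails. My plan to handle this is to observe that $d_{i_0} = s$ forces $\tau(i_0) = \A$, so every pair of sets in $\A$ meets at $i_0$ and hence the LHS of~\eqref{eq:odd2} equals $\binom{s}{2}$ exactly. I would then use $\sum_A|A| \ge s + 2(m-1)$, so that $\sum_A|A| - 2m \ge s - 2$, combined with monotonicity of $\lambda(\cdot)$ (which gives $\lambda(r)(s-2) \ge \lambda(s)(s-2) = \binom{s}{2} - 2$ for $s \ge 3$), to conclude
$$m + \lambda(r)(s - 2) \,\ge\, m + \binom{s}{2} - 2 \,\ge\, \binom{s}{2}$$
using $m \ge 2$; the $s = 2$ case reduces trivially to $1 \le m$. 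This settles the boundary case and, together with the main case, completes the proof.
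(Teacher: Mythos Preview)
Your argument is correct, and it takes a somewhat different route from the paper's. The paper proves the lemma by induction on $m$: after checking the base case $m=2$ by hand, it deletes the element $m$ from every set, applies the induction hypothesis to the resulting family $\A'$, and then adds back the $\binom{t}{2}$ pairs through $m$ (where $t = d_m$); the induction step succeeds because $\binom{t}{2} - 1 \le \lambda(r)(t-2)$ for $t \le r-1$, with the case $t = r$ handled separately exactly as you do. Your proof short-circuits this induction by applying the same per-element inequality to every $i$ simultaneously, via the overcount $\text{LHS} \le \sum_i \binom{d_i}{2}$. Unrolling the paper's induction in fact yields precisely your overcount, so the two proofs are equivalent in strength; your version is simply more direct and avoids the somewhat fiddly $m=2$ base case. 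One cosmetic remark: your case split at $d_i \le s-1$ versus $d_{i_0} = s$ is finer than necessary, since the per-element inequality already holds whenever $d_i \le r-1$; the only genuinely exceptional case is $d_{i_0} = s = r$. This does not affect correctness, as your boundary argument is valid for all $s \ge 2$.
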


\begin{proof}[Proof of Lemma~\ref{odd2}]
We shall use induction on $m$. Suppose first that $m = 2$, and let $\A$ consist of $x$ sets of size two and $y$ sets of size one. If $x \ge 2$, then we have
$${x \choose 2} + xy + {y \choose 2} \,=\, {x+y \choose 2} \, = \,  \lambda(x+y) (x + y - 2) + 2 \,\le\, \lambda(r) \big( 2x + y - 4 \big) + 2,$$
since $2x + y \ge 4$ and $x + y \le r$. Similarly, if $x = 1$ then $y + {y-1 \choose 2} \le \lambda(r)(y-2) + 2$ for every $2 \le y \le r - 1$, and if $x = 0$ then $1 + {y-2 \choose 2} \le \lambda(r)(y-4) + 2$ for every $4 \le y \le r$.\footnote{In each case, note that it suffices to check the extreme values of $y$, and recall that $\lambda(r) \ge r/2$.}

So let $m \ge 3$, and let $\A$ be a multi-family as described, let $\T = \{ A \in \A : m \in A \}$, and apply the induction hypothesis to the multi-family $\A'$ obtained by removing $m$ from each element of $\T$. Letting $t = |\T|$, assume first that $t < r$. This gives
\begin{align*}
& \left| \Big\{ \{A,B\} \in {\A \choose 2} \,:\, A \cap B \neq \emptyset \Big\} \right| \, \le \,  \left| \Big\{ \{A,B\} \in {{\A'} \choose 2} \,:\, A \cap B \neq \emptyset \Big\} \right| \,+\, {t \choose 2} \\
& \hspace{3.3cm} \le \; \lambda(r) \left( \sum_{A \in \A'} |A| \,-\, 2m + 2 \right) \,+\, (m-1) \,+\, {t \choose 2} \\
& \hspace{3.3cm} = \; \lambda(r) \left( \sum_{A \in \A} |A| \,-\, 2m \right) \,+\, (m-1) \,+\, {t \choose 2} \,-\, \lambda(r) (t - 2),
\end{align*}
so it will suffice to show that $\lambda(r) (t - 2) \ge {t \choose 2} - 1$. But $\frac{{t \choose 2} - 1}{t-2} = \frac{t+1}{2}$, and $\lambda(r) \ge \frac{r}{2}$ if $r \ge 4$, so we are done unless $t = r$.

Finally, suppose that $t = r$. Then the left-hand side of~\eqref{eq:odd2} is equal to ${r \choose 2}$, and the right-hand side is at least
$$\lambda(r) \big( r \,+\, 2(m-1) \,-\, 2m \big) \,+\, m \, = \,  {r \choose 2} \,-\, 2 \,+\, m \, \ge \,  {r \choose 2},$$
since $\A$ is a double cover of $[m]$ and $m \ge 2$. The induction step, and hence the lemma, follows.
\end{proof}

In fact, the following reformulation of Lemma~\ref{odd2} will be more convenient for us in the proof below. Here $\N_0 = \{0,1,2,\ldots\}$, and $\P(m)$ denotes the non-empty subsets of $[m]$.

\begin{lemma}\label{odd1}
Let $m \ge 2$ and $r \ge 4$. Given any function $a : \P(m) \to \N_0$ such that $\sum_S a_S \le r$ and $\sum_{S \ni j} a_S \ge 2$ for every $j \in [m]$, we have
\begin{equation}\label{eq:odd1}
\sum_{S \in \P(m)} {a_S \choose 2} \,+\, \sum_{\{S, T\} \in \J} a_S a_T \, \le \,  \lambda(r) \bigg( \sum_{S \in \P(m)} a_S |S| \,-\, 2m \bigg) \,+\, m,
\end{equation}
where $\J = \big\{ \{S, T\} \in {{\P(m)} \choose 2} \, : \,S \cap T \neq \emptyset \big\}$.
\end{lemma}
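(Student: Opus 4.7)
The plan is to derive Lemma~\ref{odd1} directly from Lemma~\ref{odd2} by regarding the function $a$ as a multiplicity vector on $\P(m)$, so that no new combinatorial content is needed. Given $a : \P(m) \to \N_0$ as in the hypothesis, I would define the multi-family $\A$ on $[m]$ that contains exactly $a_S$ copies of each $S \in \P(m)$. Then $|\A| = \sum_S a_S \le r$, and the assumption $\sum_{S \ni j} a_S \ge 2$ for every $j \in [m]$ is precisely the statement that $\A$ is a double cover of $[m]$, so the hypotheses of Lemma~\ref{odd2} are satisfied.

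Next I would translate both sides of \eqref{eq:odd2} into the language of \eqref{eq:odd1}. The right-hand side is immediate, since $\sum_{A \in \A} |A| = \sum_S a_S |S|$ and the additive $m$ and factor $\lambda(r)$ are untouched. For the left-hand side, I would partition the unordered pairs $\{A, B\} \in \binom{\A}{2}$ with $A \cap B \neq \emptyset$ according to whether $A$ and $B$ denote the same subset of $[m]$. Two distinct copies of the same $S \in \P(m)$ intersect in $S$, which is nonempty, and so contribute $\binom{a_S}{2}$ such pairs; distinct sets $S \neq T$ with $S \cap T \neq \emptyset$ contribute $a_S a_T$ pairs. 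Summing over $S \in \P(m)$ and over $\{S, T\} \in \J$ reproduces the left-hand side of \eqref{eq:odd1} exactly, and applying Lemma~\ref{odd2} yields the desired inequality.

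I expect no genuine obstacle here: the reformulation is purely bookkeeping, since all of the combinatorial content lives in the already-established Lemma~\ref{odd2}. The only small care needed is in splitting the count of intersecting pairs into the two cases above, which is precisely why \eqref{eq:odd1} displays both a $\binom{a_S}{2}$ term and a sum over $\J$, rather than a single expression. Conversely, the reverse translation (from any multi-family $\A$ to its multiplicity function $a_S := |\{A \in \A : A = S\}|$) is equally direct, so the two lemmas are genuinely equivalent formulations of the same statement.
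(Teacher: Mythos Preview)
Your proposal is correct and matches the paper's own proof essentially verbatim: the paper also defines $\A$ to contain exactly $a_S$ copies of each $S$, checks the double-cover and $|\A|\le r$ hypotheses of Lemma~\ref{odd2}, and observes that \eqref{eq:odd2} is equivalent to \eqref{eq:odd1}. No further work is needed.
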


\begin{proof}
We apply Lemma~\ref{odd2} to the multi-family $\A$ which contains exactly $a_S$ copies of $S$ for each $S \subset [m]$. The condition $\sum_{S \ni j} a_S \ge 2$ implies that $\A$ is a double cover, and $\sum_S a_S \le r$ implies that $|\A| \le r$. Thus~\eqref{eq:odd2} holds, which is clearly equivalent to~\eqref{eq:odd1}.
\end{proof}

We can now deduce Lemma~\ref{tech}.

\begin{proof}[Proof of Lemma~\ref{tech}]
We shall prove the lemma by induction on $t$. When $t = 1$ we have $v(B_1) = r$ and $e(B_1) = {r \choose 2} - 1$. Clearly $\ell_1 = 1$ and $k_1 = 0$, and
$$e(B_1) \, = \,  {r \choose 2} - 1 \, = \,   \lambda(r) \big( v(B_1) - r \big)  + \ds{r \choose 2} - 1,$$
so in fact equality holds. For the induction step we divide into three cases. Let $t \ge 2$, and assume that the lemma holds for smaller values of $t$.

\bigskip
\noindent \textbf{Case 1}: $\ell_t = \ell_{t-1} + 1$.
\medskip

Since $\G_t$ has one more component than $\G_{t-1}$, it follows that $K^{(t)}$ intersects every other clique in at most one vertex. Hence all of the edges of $K^{(t)}$ are new, and so
$$e(B_t) \, = \,  e(B_{t-1}) + {r \choose 2} - 1.$$
Now let $b$ be the number of vertices of $K^{(t)}$ which are not new, and hence intersect other components of $M$. Then $v(B_t) = v(B_{t-1}) + r - b$ and $k_t = k_{t-1} + b$, so, by the induction hypothesis for $t - 1$,
\begin{eqnarray*}
e(B_t) & \ge & \bigg( \ds\frac{{r \choose 2} - 2}{r-2} \bigg)\Big( v(B_{t-1}) + k_{t-1} - \ell_{t-1} r \Big)  + \big( \ell_{t-1} + 1 \big) \left( \ds{r \choose 2} - 1 \right)\\
& = & \bigg( \ds\frac{{r \choose 2} - 2}{r-2} \bigg)\Big( v(B_t) + k_t - \ell_t r \Big)  + \ell_t \left( \ds{r \choose 2} - 1 \right)
\end{eqnarray*}
as required.

\bigskip
\noindent \textbf{Case 2}: $\ell_t = \ell_{t-1}$.
\medskip

Since $\G_t$ and $\G_{t-1}$ have the same number of components, it follows that $K^{(t)}$ must intersect some component, $C_1$, in at least two vertices, and intersects every clique not in $C_1$ in at most one vertex. Thus, the only edges of $K^{(t)}$ which are not new have both endpoints in $C_1$. Hence, letting $a = |K^{(t)} \cap C_1|$, we have
$$e(B_t) \, \ge \,  e(B_{t-1}) + {r \choose 2} - {a \choose 2} - 1.$$
Now, let $b$ be the number of vertices of $K^{(t)} \setminus C_1$ which are not new, and hence intersect other components of $\G_t$. Then $v(B_t) = v(B_{t-1}) + r - a - b$ and $k_t = k_{t-1} + b$, so, by the induction hypothesis for $t - 1$,
\begin{eqnarray*}
e(B_t) & \ge & \bigg( \ds\frac{{r \choose 2} - 2}{r-2} \bigg)\Big( v(B_{t-1}) + k_{t-1} - \ell_{t-1} r \Big)  + \big( \ell_{t-1} + 1 \big) \left( \ds{r \choose 2} - 1 \right) - {a \choose 2}\\
& \ge & \bigg( \ds\frac{{r \choose 2} - 2}{r-2} \bigg)\Big( v(B_t) + k_t - \ell_t r - r + a \Big)  + \big( \ell_t + 1 \big)  \left( \ds{r \choose 2} - 1 \right)- {a \choose 2}.
\end{eqnarray*}
If $a \le r - 1$ then
$$\ds{r \choose 2} - 1 - {a \choose 2} - (r-a) \bigg( \ds\frac{{r \choose 2} - 2}{r-2} \bigg) \, \ge \,  0,$$
since the worst cases are the extremes ($a = 2$ and $a = r - 1$), and using the fact that $r \ge 4$. But if $a = r$, then our bound on $e(B_t)$ can be improved to $e(B_t) \ge e(B_{t-1})$ (which is trivial since we are not allowed to colour edges of $B_{t-1}$ red), and $v(B_t) = v(B_{t-1})$, so we are done in this case as well.

\bigskip
\noindent \textbf{Case 3}: $\ell_t < \ell_{t-1}$.
\medskip

This case is more difficult: to prove it, we shall use Lemma~\ref{odd1}. Set $m = \ell_{t-1} - \ell_t + 1$, and observe that $m \ge 2$, and that $K^{(t)}$ intersects $m$ components $C_1,\ldots,C_m$ in at least two vertices each, and intersects every clique not in these components in at most one vertex. Define, for each $S \in \P(m)$,
$$a_S \, = \, \big| \big\{ v \in K^{(t)} \,:\, v \in C_j \,\Leftrightarrow\, j \in S \big\} \big|,$$
and note that $\sum_{S} a_S \le r$ and $\sum_{S \ni j} a_S = |K^{(t)} \cap C_j| \ge 2$. Moreover, set
$$e(A) \, = \, \sum_{S \in \P(m)} {a_S \choose 2} + \sum_{\{S,T\} \in \J} a_S a_T,$$
where $\J = \big\{ \{S, T\} \in {{\P(m)} \choose 2} \, : \,S \cap T \neq \emptyset \big\}$, as in Lemma~\ref{odd1}. We claim that
$$e(B_t) \, \ge \, e(B_{t-1}) + {r \choose 2} - e(A) - 1.$$
Indeed, if an edge of $K^{(t)}$ was already present in $B_{t-1}$, then there must be a clique (and hence a component $C_j$) which contains both of its endpoints. Moreover, $e(A)$ counts exactly the number of pairs of vertices of $K^{(t)}$ which are both in some component $C_j$.

Let $a = \sum_{S} a_S$ denote the number of vertices of $K^{(t)} \cap \big( C_1 \cup \ldots \cup C_m \big)$, and let $b$ denote the number of vertices of $K^{(t)} \setminus \big( C_1 \cup \ldots \cup C_m \big)$ which intersect other components of $\G_t$. Then $v(B_t) = v(B_{t-1}) + r - a - b$, and recall that $\ell_t  = \ell_{t-1}  - (m - 1)$. Also, let
$$c \, = \, \sum_{S \in \P(m)} a_S \big( |S| - 1 \big),$$
and observe that $k_t \le k_{t-1} + b - c$, since $K^{(t)}$ unifies the components $C_1,\ldots,C_m$, and so if $\{ j \in [m] : v \in C_j \} = S$ then $c_t(v) = c_{t-1}(v) - |S| + 1$.

Thus, by the induction hypothesis for $t - 1$,
\begin{eqnarray*}
e(B_t) & \ge & \bigg( \ds\frac{{r \choose 2} - 2}{r-2} \bigg)\Big( v(B_{t-1}) + k_{t-1} - \ell_{t-1} r \Big)  + \big( \ell_{t-1} + 1 \big) \left( \ds{r \choose 2} - 1 \right) - e(A)\\
& \ge & \bigg( \ds\frac{{r \choose 2} - 2}{r-2} \bigg)\Big( v(B_t) + k_t - \ell_t r - mr + a + c \Big)  + \big( \ell_t + m \big)  \left( \ds{r \choose 2} - 1 \right) - e(A).
\end{eqnarray*}
Recall that $a \le r$ and $\sum_{S \ni j} a_S = |K^{(t)} \cap C_j| \ge 2$. Hence, by Lemma~\ref{odd1},
$$e(A) \, \le \,  \lambda(r) \bigg( \sum_{S \in \P(m)} a_S |S| \,-\, 2m \bigg) + m \, = \,   \lambda(r) \big( a + c \big) \,-\, m \left( \lambda(r) r - {r \choose 2} + 1\right),$$
since $2\lambda(r) - 1 = \lambda(r) r - {r \choose 2} + 1$. Thus
$$e(B_t) \, \ge \,  \bigg( \ds\frac{{r \choose 2} - 2}{r-2} \bigg)\Big( v(B_t) + k_t - \ell_t r \Big)  + \ell_t  \left( \ds{r \choose 2} - 1 \right) ,$$
as required. This completes the induction step, and hence the proof of the lemma.
\end{proof}

For completeness, let us quickly note formally that Lemma~\ref{extremal} follows immediately from Lemma~\ref{tech}.

\begin{proof}[Proof of Lemma~\ref{extremal}]
Let $F$ be an $r$-witness set for the graph $G$ and the edge $e$, and run the Red Edge Algorithm. We claim that the graph $\G_m$ is connected. To see this, first observe that if $f \in F(e)$, then there must be a path in $\G_m$ from $K^{(m)}$ to a clique $K^{(t)}$ containing $f$; indeed, this follows immediately from the definition of the Witness Set Algorithm, working backwards from $e$. Now let $j \in [m]$, and note that since $F(e_j)$ is non-empty and $F(e_j) \subset F(e)$, there exists an edge $f \in F(e_j) \cap F(e)$. By the previous observation, it follows there is a path in $\G_m$ from $K^{(m)}$ to a clique $K^{(t)}$ containing $f$, and a path in $\G_m$ from $K^{(j)}$ to a clique $K^{(t')}$ containing $f$. Since $f \in K^{(t)} \cap K^{(t')}$, these cliques are neighbours in $\G_m$, and hence there is a path from $K^{(j)}$ to $K^{(m)}$ in $\G_m$ for any $j \in [m]$, as claimed.

It follows that $\ell_m = 1$, and so $c_m(v) = 1$ for every $v \in V(B_m)$, which means that $k_m = 0$. Hence, by Lemma~\ref{tech},
$$e(F) \, \ge \,  \bigg( \ds\frac{{r \choose 2} - 2}{r-2} \bigg)\Big( v(F) - r \Big)  + \left( \ds{r \choose 2} - 1 \right) \, = \,  \lambda(r) \big( v(F) - 2 \big) + 1,$$
as required.
\end{proof}

\subsection{Bootstrap methods}

To deduce Proposition~\ref{prop:lower}, we shall borrow a simple but important idea from the theory of bootstrap percolation. The following lemma is based on an idea of Aizenman and Lebowitz~\cite{AL}.

\begin{lemma}\label{ALlem}
Let $F$ be an $r$-witness set on a graph $G$, and let $L \in \N$. If $e(F) \ge L$, then there exists an edge $f \in E(G)$ with
$$L \, \le \,  e\big( F(f) \big) \, \le \,  {r \choose 2}L$$
in the same realization of the Witness-Set Algorithm.
\end{lemma}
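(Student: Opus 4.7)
The plan is to choose $f$ to be the \emph{first} infected edge, in the order fixed by Step~2 of the Witness-Set Algorithm, for which $e(F(f)) \ge L$. We may assume $L \ge 2$, since otherwise any $f \in F \cap E(G)$ works (it has $e(F(f)) = 1$). Such an $f$ exists because the final infected edge $e$ satisfies $F(e) = F$, and hence $e(F(e)) \ge L$, whereas every initial edge $g \in G$ has $F(g) = \{g\}$, so $e(F(\cdot))$ must first cross the threshold $L$ at some infection step.

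For the lower bound, the choice of $f$ directly gives $e(F(f)) \ge L$. For the upper bound, let $K$ be the $r$-clique whose completion infects $f$. By the definition of the Witness-Set Algorithm,
$$F(f) \, = \, \bigcup_{e' \in K \setminus \{f\}} F(e'),$$
and each $e' \in K \setminus \{f\}$ either lies in $G$, in which case $e(F(e')) = 1 \le L - 1$, or was infected strictly before $f$, in which case $e(F(e')) \le L - 1$ by the minimal choice of $f$. Since $|K \setminus \{f\}| = {r \choose 2} - 1$, we obtain
$$e(F(f)) \, \le \, \sum_{e' \in K \setminus \{f\}} e(F(e')) \, \le \, \bigg( {r \choose 2} - 1 \bigg) (L - 1) \, \le \, {r \choose 2} L,$$
as required.

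This is a standard threshold-crossing argument, directly analogous to the Aizenman--Lebowitz lemma already invoked in the paper, and I do not expect any substantive obstacle. The argument works precisely because $e(F(\cdot))$ grows by at most a multiplicative factor of ${r \choose 2} - 1$ in a single infection step, so the window $[L, {r \choose 2} L]$ is wide enough that the sequence of witness-set sizes cannot jump over it.
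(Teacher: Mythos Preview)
Your proof is correct and follows essentially the same threshold-crossing argument as the paper's proof, which simply observes that the maximum of $e(F(\cdot))$ over infected edges grows by at most a factor of $\binom{r}{2}$ per step; you have just written out the details more explicitly (and in fact obtained the slightly sharper bound $(\binom{r}{2}-1)(L-1)$). Note that, as the paper's own usage confirms, the edge $f$ produced lies in $\langle G\rangle_{K_r}$ rather than necessarily in $E(G)$, so the ``$f\in E(G)$'' in the statement should be read accordingly.
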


\begin{proof}
Run the Witness-Set Algorithm, and observe that the maximum size of $e(F(f))$, over all infected edges, increases by at most a factor of ${r \choose 2}$ at each step of the process. It follows immediately that a graph $F(f)$ as described must have been created, at some point in the process. Moreover, such a graph exists with $F(f) \subset F$.
\end{proof}

We have finally finished with our deterministic preliminaries, and it is time to reintroduce randomness. There is, however, little left to do: the bound we require will follow easily from Lemmas~\ref{extremal} and~\ref{ALlem} by Markov's inequality.

\medskip
For each $m \in \N$ and every $e \in E(K_n)$, let
$$Y_m(e) \,:=\, \left| \Big\{ F \subset G_{n,p} \,:\, e \subset V(F), \text{ and } e(F) = m \ge \lambda(r) \big( v(F) - 2 \big) + 1 \Big\} \right|$$
be the random variable which counts the number of subgraphs $F$ of $G_{n,p}$ whose vertex set contains the endpoints of the edge $e$, and have $m \ge \lambda(r) \big( v(F) - 2 \big) + 1$ edges. We first bound the expected size of $Y_m(e)$.

\begin{lemma}\label{expY}
For every $r \ge 4$, there exists a $C(r) > 0$ such that the following holds. If $n \in \N$ and $p > 0$ satisfy $p n^{1/\lambda(r)}\log n \le 1/(2e)$, and $n$ is sufficiently large, then
$$\Ex \big( Y_m(e) \big) \, \le \,  \bigg( \frac{m + C(r)}{2 {r \choose 2} \log n} \bigg)^{m - \lambda(r)}$$
for every $e \in E(K_n)$ and every $\lambda(r) + 1 \le m \le {r \choose 2} \log n$.
\end{lemma}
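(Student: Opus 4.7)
The plan is to bound $\Ex(Y_m(e))$ by summing the expected number of candidate subgraphs $F$ over their vertex count $v = k+2$, where $k := v(F) - 2$. The constraint $m \ge \lambda(r)(v(F)-2) + 1$ forces $k \le K := \lfloor (m-1)/\lambda(r) \rfloor$; writing $\lambda = \lambda(r)$, the first step is
\begin{equation*}
\Ex(Y_m(e)) \, \le \, \sum_{k=1}^{K} \binom{n-2}{k} \binom{\binom{k+2}{2}}{m} p^m,
\end{equation*}
obtained by choosing the $k$ non-endpoint vertices of $V(F)$ and then the $m$ edges of $F$ among the $\binom{k+2}{2}$ available slots.

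To bound a single term, I would apply the standard estimates $\binom{n-2}{k} \le n^k/k!$ and $\binom{\binom{k+2}{2}}{m} \le \big( e \binom{k+2}{2}/m \big)^m$ (the latter via $m! \ge (m/e)^m$). The hypothesis $p n^{1/\lambda} \log n \le 1/(2e)$ supplies both $n p^\lambda \le (2e \log n)^{-\lambda}$ and $p \le (2e\log n)^{-1}$, so splitting $n^k p^m = (np^\lambda)^k \cdot p^{m - \lambda k}$ (valid since $m > \lambda k$) gives $n^k p^m \le (2e\log n)^{-m}$. Combining these,
\begin{equation*}
\binom{n-2}{k} \binom{\binom{k+2}{2}}{m} p^m \, \le \, \frac{1}{k!} \left( \frac{\binom{k+2}{2}}{2m \log n} \right)^m.
\end{equation*}

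Since $\binom{k+2}{2}$ is increasing in $k$ and $\sum_{k \ge 1} 1/k! \le e$, summing yields
\begin{equation*}
\Ex(Y_m(e)) \, \le \, e \left( \frac{\binom{K+2}{2}}{2m \log n} \right)^m \, \le \, e \left( \frac{(m+2\lambda)^2}{4 \lambda^2 m \log n} \right)^m,
\end{equation*}
using $K + 2 \le (m + 2\lambda)/\lambda$. Applying the elementary inequality $(m + 2\lambda)^2/m \le m + C_1$ with $C_1 = C_1(r) := 4\lambda + 4\lambda^2$ converts this into $e \big( (m + C_1)/(4\lambda^2 \log n) \big)^m$.

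Finally, I would choose $C = C(r)$ large enough that $(m + C_1)/(4\lambda^2) \le (m + C)/(2\binom{r}{2})$ for all $m \ge 0$; this is possible because $\lambda(r) \ge r/2$ gives $4\lambda(r)^2 \ge r^2 > r(r-1) = 2\binom{r}{2}$ for $r \ge 4$. With such $C$, the estimate becomes $\Ex(Y_m(e)) \le e \big( (m+C)/(2\binom{r}{2}\log n) \big)^m$. Since $m \le \binom{r}{2}\log n$, the base is at most $1/2 + o(1)$, which is $\le e^{-1/\lambda}$ for $n$ sufficiently large (as $e^{-1/\lambda} \ge e^{-1/2} > 0.6$); this lets us absorb the prefactor $e$ by dropping the exponent from $m$ to $m - \lambda$, using $e \cdot x^m \le x^{m - \lambda}$ whenever $x \le e^{-1/\lambda}$. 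The main delicate step is arranging the constants so that the inequalities fit together cleanly across the entire range $\lambda + 1 \le m \le \binom{r}{2}\log n$, in particular so that the $e$ from $\sum 1/k!$ is swallowed by the reduction $m \to m - \lambda$ in the exponent.
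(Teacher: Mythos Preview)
Your proof is correct and follows essentially the same approach as the paper: both bound $\Ex(Y_m(e))$ by summing over the possible vertex counts $v(F)=k+2$, use the hypothesis $pn^{1/\lambda}\log n \le 1/(2e)$ to control $n^k p^m$, and then convert the bound $\binom{k+2}{2}/m \lesssim (m+C)/\binom{r}{2}$ into the stated estimate. The only differences are organizational: the paper takes $\ell$ to be the maximal admissible $v(F)$, bounds the sum by (essentially) its largest term, and absorbs the prefactor using the extra factor $p\ell^2=o(1)$ coming from $m\ge\lambda(\ell-2)+1$; you instead sum the terms via $\sum_k 1/k!\le e$ and then absorb the resulting factor $e$ at the end using $e\cdot x^m\le x^{m-\lambda}$ once $x\le e^{-1/\lambda}$. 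Your bookkeeping of constants is more explicit, but the underlying computation is the same.
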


\begin{proof}
Let $\ell \in \N$ be maximal such that $m \ge \lambda(r) \big( \ell - 2 \big) + 1$. Then $v(F) \le \ell$, and hence
$$\Ex\big( Y_m(e) \big) \, \le \, \bigg( \sum_{j = 0}^{\ell - 2} {n \choose j} \bigg) {{\ell^2/2} \choose m} p^m \, \le \,  \frac{2 \cdot n^{\ell-2}}{(\ell-2)!} \left( \frac{e p \ell^2}{2m} \right)^{m} \, \ll \, \left( n^{1/\lambda(r)} \cdot \frac{e p \ell^2}{2 m} \right)^{\lambda(r)( \ell - 2 )}$$
where the last inequality follows since $m - \lambda(r) \big( \ell - 2 \big) \ge 1$ and $p \ell^2 \le n^{-1/2\lambda(r)} = o(1)$. Next, observe that
$$\frac{\ell^2}{2 m} \, \le \, \frac{m + C(r)}{{r \choose 2}}$$
if $C(r)$ is sufficiently large, since $m\big( m + C(r) \big) \ge \big( m + 2\lambda(r) \big)^2 \ge \big( \lambda(r) \ell \big)^2 \ge \frac{r^2\ell^2}{4}$. Hence, recalling that $\lambda(r) \big( \ell - 2 \big) \ge m - \lambda(r)$ and $m \le {r \choose 2} \log n$, we obtain
$$\Ex\big( Y_m(e) \big) \, \le \, \left( n^{1/\lambda(r)} e p \cdot \frac{m + C(r)}{{r \choose 2}} \right)^{\lambda(r)( \ell - 2 )}  \, \le \, \left(\frac{m + C(r)}{2{r \choose 2}\log n} \right)^{m - \lambda(r)}$$
if $n$ is sufficiently large, as required.
\end{proof}

We can now easily deduce Proposition~\ref{prop:lower}.

\begin{proof}[Proof of Proposition~\ref{prop:lower}]
Let $r \ge 4$, let $n \in \N$, and let $p = p(n) > 0$ satisfy $p n^{1/\lambda(r)} \log n \le 1/(2e)$. We claim that, for every $e \in E(K_n)$,
$$\Pr\Big( e \in \< G_{n,p} \>_{K_r} \Big) \,\to\, 0$$
as $n \to \infty$. Indeed, suppose that $e \in \< G_{n,p} \>_{K_r}$, run the Witness-Set Algorithm, and consider the graph $F = F(e) \subset G_{n,p}$.

Suppose first that $e(F) \le \log n$.  By Lemma~\ref{extremal}, we have
$$e(F) \, \ge \, \lambda(r) \big( v(F) - 2 \big) + 1,$$
and thus either $e \in G_{n,p}$, or $Y_m(e) \ge 1$ for some
$$\lambda(r)(r - 2) + 1 \, \le \, m \, \le \, \log n.$$
By Lemma~\ref{expY}, this has probability at most
$$p \,+ \sum_{m = \lambda(r)(r-2) + 1}^{\log n} \Ex\big( Y_m(e) \big) \, \le \,  p \,+ \sum_{m = \lambda(r)(r-2) + 1}^{\log n} \bigg( \frac{m + C(r)}{2 {r \choose 2} \log n} \bigg)^{m - \lambda(r)} \to \; 0,$$
as $n \to \infty$, as claimed.

So suppose next that $e(F) \ge \log n$. By Lemma~\ref{ALlem}, there must exist an edge $f$ in $K_n$ such that $\log n \le e(F(f)) \le {r \choose 2} \log n$, which means that $Y_m(f) \ge 1$ for some $\log n \le m \le {r \choose 2} \log n$. By Lemma~\ref{expY}, the expected number of such edges $f$ is at most
$${n \choose 2} \sum_{m = \log n}^{{r \choose 2} \log n} \bigg( \frac{m + C(r)}{2 {r \choose 2} \log n} \bigg)^{m - \lambda(r)} \le \; n^2 \bigg( \frac{2}{r^2} \bigg)^{\log n - \lambda(r)} \to \; 0,$$
as $n \to \infty$, since $r^2/2 \ge 8 > e^2$. This proves the proposition.
\end{proof}

We finish by noting that Theorem~\ref{thm:Kr} follows immediately from Propositions~\ref{prop:bal} and~\ref{prop:lower}.

\begin{proof}[Proof of Theorem~\ref{thm:Kr}]
By Proposition~\ref{prop:bal}, we have
$$p_c(n,H) \, \ll \,  \big( \log n \big)^{2 / \lambda(H)} n^{-1 / \lambda(H)}$$
for every balanced graph $H$. Moreover $K_r$ is balanced, since
$$\frac{{r-1 \choose 2} - 1}{r-3} \, \le \, \frac{{r \choose 2} - 2}{r-2}$$
for every $r \ge 4$, and $\lambda(K_r) = \lambda(r) \ge 2$, so the upper bound follows.

For the lower bound, suppose that $pn^{1/\lambda(r)} \log n \le 1/(2e)$, and $n$ is sufficiently large. By Proposition~\ref{prop:lower}, we have
$$\Pr\Big( e \in \< G_{n,p} \>_{K_r} \Big) \,\to\, 0$$
for every edge $e \in E(K_n)$. Thus $G_{n,p}$ does not percolate, with high probability, as required.
\end{proof}

\section{The threshold for $K_4$-percolation}\label{k4sec}

In this section we shall prove Theorem~\ref{k=4}, which determines the threshold for $K_4$-percolation on $K_n$. The proof is quite different from that of Theorem~\ref{thm:Kr}, and uses ideas from the study of $2$-neighbour bootstrap percolation on $[n]^d$ (see~\cite{AL,BB}, or the more recent improvements in~\cite{BBMhigh,GHM,Hol}).

We begin with a simple but key observation. A collection $\K$ of cliques is said to be \emph{triangle-free} if there do not exist distinct vertices $u,v,w$ and cliques $A,B,C \in \K$ such that $u \in V(A) \cap V(B)$, $v \in V(B) \cap V(C)$ and $w \in V(A) \cap V(C)$.

\begin{obs}\label{obsk4}
For every graph $G$, the graph $\< G\>_{K_4}$ consists of a triangle-free collection of edge-disjoint cliques.
\end{obs}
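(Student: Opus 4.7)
The plan is to decompose $\< G \>_{K_4}$ into its maximal cliques and verify two things: that these maximal cliques are pairwise edge-disjoint, and that no three of them form a triangle in the sense of the definition. Since every edge of $\< G \>_{K_4}$ is trivially contained in some maximal clique, these two properties together will exhibit exactly the decomposition claimed.

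The key step is edge-disjointness, which I would prove by contradiction. Suppose two distinct maximal cliques $A$ and $B$ share an edge $uv$; by maximality we must have $|A| \ge 3$ (otherwise $A = \{u,v\} \subsetneq B$, contradicting maximality of $A$). Pick any $c \in A \setminus \{u,v\}$ and any $b \in B \setminus A$. Then all five of $uv, cu, cv, bu, bv$ lie in $\< G \>_{K_4}$, so the $K_4$ on $\{u,v,b,c\}$ has unique missing edge $bc$, and the $K_4$-closure property forces $bc \in \< G \>_{K_4}$. Running this argument for every $c \in A \setminus \{u,v\}$ shows that $b$ is adjacent in $\< G \>_{K_4}$ to every vertex of $A$, so $A \cup \{b\}$ is a strictly larger clique, contradicting the maximality of $A$. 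In particular, every edge of $\< G \>_{K_4}$ lies in a unique maximal clique.

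Triangle-freeness then follows almost by bookkeeping. If distinct maximal cliques $A, B, C$ and distinct vertices $u \in V(A) \cap V(B)$, $v \in V(B) \cap V(C)$, $w \in V(A) \cap V(C)$ existed, then the three edges $uw \in A$, $uv \in B$, $vw \in C$ would all lie in $\< G \>_{K_4}$, so $\{u,v,w\}$ would be a clique there, hence contained in some maximal clique $M$. By the uniqueness statement established in the previous step, applied in turn to each of $uw$, $uv$, $vw$, we would conclude $M = A = B = C$, contradicting their assumed distinctness.

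I expect the $K_4$-completion argument in the edge-disjointness step to be the main obstacle, in the sense that it is the only place the specific bootstrap rule is used; once uniqueness of the maximal clique through each edge is in hand, triangle-freeness is essentially automatic.
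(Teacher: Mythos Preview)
Your proof is correct and follows essentially the same approach as the paper: the core is the $K_4$-completion argument showing that two cliques in $\<G\>_{K_4}$ sharing an edge must merge, which you spell out explicitly and the paper states tersely. The only minor difference is that for triangle-freeness the paper argues directly that three cliques forming a triangle merge into a single clique, whereas you deduce it neatly from the uniqueness of the maximal clique through each edge; both routes are short and equivalent in spirit.
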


\begin{proof}
To show that $\< G\>_{K_4}$ is a collection of edge-disjoint cliques, simply note that if two cliques $A$ and $B$ share more than one vertex, then the closure $\< A \cup B \>_{K_4}$ is a clique on vertex set $V(A) \cup V(B)$. To prove that this collection is triangle-free, observe that if $A$, $B$ and $C$ form a triangle, then the closure $\< A \cup B \cup C \>_{K_4}$ is a clique on vertex set $V(A) \cup V(B) \cup V(C)$.
\end{proof}

Say that a clique $K$ is \emph{internally spanned} by a graph $G$ if $\< G \cap K \>_{K_4} = K$. We shall study, for each $\ell \in \N$ and $p > 0$, the probability
$$P(\ell,p) \,:=\, \Pr\Big( K_\ell \textup{ is internally spanned by }G_{n,p} \Big).$$
In order to do so, we shall introduce a simple algorithm for filling $K_n$, which we call the Clique-Process. It is analogous to the `rectangle process' in two-neighbour bootstrap percolation on $[n]^d$ (see Proposition~30 of~\cite{Hol} or Theorem~11 of~\cite{BB}).

\begin{cliquealg}
Let $G$ be a graph on $n$ vertices, and run the $K_4$-process as follows:
\begin{enumerate}
\item[0.] At each step of the process, we will maintain a collection $(R_1,A_1),\ldots,(R_m,A_m)$, where $R_j$ is a clique and $A_j \subset E(G)$, such that $\< A_j \>_{K_4} = R_j$ for each $j \in [m]$.\\[-2ex]
\item[1.] At time zero, set $R_j = A_j = \{e_j\}$ for each $j \in [m]$, where $E(G) = \{e_1,\ldots,e_m\}$. \\[-2ex]
\item[2.] At time $t \in 2\ZZ$, choose a pair $\{i,j\}$ such that $|R_i \cap R_j| \ge 2$, if such a pair exists. Delete $(R_i,A_i)$ and $(R_j,A_j)$, and replace them with $(\< A_i \cup A_j \>_{K_4} , A_i \cup A_j)$. \\[-2ex]
\item[3.] At time $t \in 2\ZZ + 1$, choose a triple $\{i,j,k\}$ such that $R_i$, $R_j$, and $R_k$ form a triangle in the hypergraph defined by the cliques, if such a triple exists. Delete $(R_i,A_i)$, $(R_j,A_j)$ and $(R_k,A_k)$, and replace them with $(\< A_i \cup A_j \cup A_k \>_{K_4} , A_i \cup A_j \cup A_k)$. \\[-2ex]
\item[4.] Repeat steps $2$ and $3$ until the collection $(R_1,A_1),\ldots,(R_m,A_m)$ stabilizes, that is, until there are no more pairs as in step 2, or triples as in step 3.
\end{enumerate}
\end{cliquealg}

The algorithm terminates by the proof of Observation~\ref{obsk4}. Observe moreover that the $A_j$ are in fact \emph{disjoint} sets of edges of $G$.
We can now easily deduce the following bound, which was first proved by Bollob\'as~\cite{Bela68}.

\begin{lemma}\label{2l-3}
If $G$ internally spans $K_\ell$ then $e(G) \ge 2\ell - 3$.
\end{lemma}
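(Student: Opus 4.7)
The plan is to run the Clique Process on $G$ and show, by induction on the number of merges performed, that the invariant $|A_j| \ge 2v(R_j) - 3$ is preserved for every pair $(R_j, A_j)$ in the current collection. The base case is immediate: each initial pair $(\{e_j\}, \{e_j\})$ satisfies $|A_j| = 1 = 2 \cdot 2 - 3$. Since $G$ internally spans $K_\ell$, a short check (sketched below) shows that the algorithm terminates with the single pair $(K_\ell, E(G))$, and applying the invariant to this pair gives $e(G) \ge 2\ell - 3$.

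For a Step 2 merge of $(R_i, A_i)$ and $(R_j, A_j)$ with $|V(R_i) \cap V(R_j)| \ge 2$, the closure computation in the proof of Observation~\ref{obsk4} gives $v(R_{\text{new}}) = |V(R_i) \cup V(R_j)| \le v(R_i) + v(R_j) - 2$. Using that $A_i$ and $A_j$ are disjoint (as observed immediately after the algorithm), the inductive hypothesis supplies
$$|A_{\text{new}}| = |A_i| + |A_j| \ge 2\bigl(v(R_i) + v(R_j)\bigr) - 6 \ge 2v(R_{\text{new}}) - 2,$$
which is more than we need. For a Step 3 merge of three cliques forming a triangle, the triangle condition supplies three distinct vertices $u, v, w$, one in each pairwise intersection; a brief inclusion-exclusion (each of $u, v, w$ lies in at least two of the three vertex sets and so contributes at least $1$ to the gap between $v(R_i) + v(R_j) + v(R_k)$ and $v(R_{\text{new}})$) gives $v(R_{\text{new}}) \le v(R_i) + v(R_j) + v(R_k) - 3$. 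Summing the three inductive bounds then yields $|A_{\text{new}}| \ge 2v(R_{\text{new}}) - 3$.

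The argument is essentially bookkeeping; the one point that requires care is the $-3$ saving in Step 3 when the cliques have pairwise intersections larger than a single vertex, but the distinctness of the three triangle vertices guarantees this saving regardless of the sizes of pairwise or triple intersections. The remaining verification — that the final state of the algorithm is the single pair $(K_\ell, E(G))$ — follows from stability together with $\bigcup_j R_j = \langle G \rangle_{K_4} = K_\ell$: if two distinct cliques remained sharing a vertex, then the clique containing a cross-edge between them would complete a forbidden triangle, while any two vertex-disjoint cliques would force a cross-edge into a third clique that (together with them) likewise creates a triangle with the cliques in question.
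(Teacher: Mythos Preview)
Your argument is essentially the paper's proof, reorganized as a forward induction on merges rather than the paper's induction on $\ell$ via the penultimate step of the Clique Process; the key vertex-count inequalities ($v(R_i)+v(R_j)\ge v(R_{\mathrm{new}})+2$ in Step~2 and $v(R_i)+v(R_j)+v(R_k)\ge v(R_{\mathrm{new}})+3$ in Step~3) are identical in both.

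One genuine slip: your termination argument for the vertex-disjoint case is wrong as stated. If $R_1, R_2$ are vertex-disjoint and the cross-edge lies in a third clique $R_3$, then $(R_1, R_2, R_3)$ is \emph{not} a triangle, since $R_1 \cap R_2 = \emptyset$. The clean fix is to observe that if $m \ge 2$ at termination then every vertex $v$ lies in at least two of the final cliques $R_i, R_j$ (otherwise all $\ell-1$ edges at $v$ would lie in a single clique, forcing that clique to be all of $K_\ell$); choosing $u \in R_i \setminus \{v\}$ and $w \in R_j \setminus \{v\}$, the clique containing the edge $uw$ then completes a triangle with $R_i$ and $R_j$ on the distinct vertices $u, v, w$. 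With this patch your proof is correct.
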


\begin{proof}
We shall use induction on $\ell$; for $\ell \le 3$ the result is trivial. Now suppose that $G$ internally spans $R = K_\ell$, and run the Clique Process for $G$. At the penultimate step we have either two or three disjointly internally spanned proper sub-cliques of $R$, which together span $R$. If these cliques are $S = \< A \>_{K_4}$ and $T = \< B \>_{K_4}$, then
$$e(G) \, \ge \, e(A) + e(B) \, \ge \, 2\big( v(S) + v(T) \big) - 6 \, \ge \, 2\ell - 2,$$
since $A \cap B = \emptyset$ and $|S \cap T| \ge 2$, so $v(S) + v(T) \ge \ell + 2$. If they are $S = \< A \>_{K_4}$, $T = \< B \>_{K_4}$ and $U = \< C \>_{K_4}$, then we have
$$e(G) \, \ge \, e(A) + e(B) + e(C) \, \ge \, 2\big( v(S) + v(T) + v(U) \big) - 9 \, \ge \, 2\ell - 3,$$
since $A,B,C$ are pairwise disjoint and $(S,T,U)$ form a triangle, so $v(S) + v(T) + v(U) \ge \ell + 3$.
\end{proof}

The following bounds on $P(\ell,p)$ now follow easily.

\begin{lemma}\label{Plp}
For every $3 \le \ell \in \N$ and $p \in (0,1)$ with $p \ell^2 \le 1$,
$$\left( \frac{1}{2e^2} \right)^{\ell} (\ell p )^{2\ell-3} \, \le \,  P(\ell,p) \, \le \,  4^3 \bigg( \frac{e}{4} \bigg)^{2\ell} \big(\ell p \big)^{2\ell-3}.$$
\end{lemma}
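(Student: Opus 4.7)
I would argue by induction on $\ell$, with base case $\ell=3$ where $P(3,p)=p^3$ is immediate (the only way to span $K_3$ is for all three edges to be already present). For the inductive step, the key observation is that if $G_{n,p}$ restricted to $[\ell-1]$ internally spans $K_{\ell-1}$ and vertex $\ell$ has at least two $G_{n,p}$-neighbours in $[\ell-1]$, then $K_\ell$ is internally spanned: once $K_{\ell-1}$ is complete, each missing edge $\{\ell,j\}$ lies in a copy of $K_4$ on $\{\ell, j, a, b\}$ (with $a, b$ being two of $\ell$'s neighbours) having five of its six edges present, so $\{\ell, j\}$ is infected. The two events are independent (they live on disjoint edge sets), so
\[P(\ell,p) \;\ge\; P(\ell-1,p)\cdot \Pr\bigl(\Bin(\ell-1, p)\ge 2\bigr) \;\ge\; P(\ell-1,p)\cdot\binom{\ell-1}{2}p^2(1-p)^{\ell-3}.\]
Iterating from $\ell = 3$ yields $P(\ell,p)\ge p^{2\ell-3}(1-p)^{(\ell-2)(\ell-3)/2}\prod_{i=4}^\ell\binom{i-1}{2}$. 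The hypothesis $p\ell^2\le 1$ forces $(1-p)^{(\ell-2)(\ell-3)/2}\ge e^{-1}$, and Stirling applied to $\prod_{i=4}^\ell\binom{i-1}{2}=(\ell-1)!(\ell-2)!/2^{\ell-2}$ produces a lower bound on the order of $\ell^{2\ell-3}/(2e^2)^{\ell}$, delivering the claimed $(1/(2e^2))^{\ell}(\ell p)^{2\ell-3}$ with room.

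\textbf{Plan for the upper bound.} I would use a union bound over minimal spanning subgraphs of $K_\ell$. By Lemma~\ref{2l-3}, every such minimal subgraph has at least $2\ell-3$ edges, so
\[P(\ell,p)\;\le\;\sum_{H\text{ min.\ spanning}} p^{|H|}\;\le\;\sum_{k\ge 2\ell-3}\binom{\binom{\ell}{2}}{k}p^k.\]
Under $p\ell^2\le 1$ the ratio of consecutive terms is at most $p\binom{\ell}{2}/(2\ell-2)\le 1/8$ for $\ell\ge 3$, so the geometric tail is at most $2\binom{\binom{\ell}{2}}{2\ell-3}p^{2\ell-3}$. Combining $\binom{n}{k}\le(en/k)^k$ with the identity $\ell(\ell-1)/(2(2\ell-3)) = (\ell/4)(1 + 1/(2\ell-3))$ and the bound $(1+1/(2\ell-3))^{2\ell-3}\le e$ yields $\binom{\binom{\ell}{2}}{2\ell-3}\le e(e\ell/4)^{2\ell-3}$. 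Assembling,
\[P(\ell,p)\;\le\;2e(e/4)^{2\ell-3}(\ell p)^{2\ell-3}\;=\;\tfrac{128}{e^{2}}(e/4)^{2\ell}(\ell p)^{2\ell-3}\;<\;4^{3}(e/4)^{2\ell}(\ell p)^{2\ell-3}.\]

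\textbf{Main obstacle.} Neither argument has a single dramatic sticking point; the proofs reduce to fairly direct computations after setting up the right inductive step (lower bound) and the right geometric series (upper bound). The principal delicacy is constant-tracking: the claimed constants $1/(2e^{2})$ and $4^{3}$ leave only modest slack, so the Stirling estimate for $(\ell-1)!(\ell-2)!$ in the lower bound and the cascade $\binom{n}{k}\to e(e\ell/4)^{k}\to$ target in the upper bound must be executed carefully. A secondary concern is verifying the small-$\ell$ base cases directly, since the asymptotic Stirling bounds are not sharp there; fortunately both bounds hold very comfortably at $\ell=3$.
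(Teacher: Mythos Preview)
Your proposal is correct and follows essentially the same approach as the paper: the lower bound via ``add one vertex with two back-edges'' is exactly the paper's construction (you phrase it as induction, the paper as a direct count of such graphs, but the product $\prod\binom{j-1}{2}$ and the Stirling estimate are identical), and the upper bound via Lemma~\ref{2l-3} plus $\binom{\binom{\ell}{2}}{2\ell-3}p^{2\ell-3}$ is the paper's argument as well. The only cosmetic differences are that the paper handles the lower-bound disjointness by looking at \emph{induced} copies (paying a $(1-p)^{\ell^2}$ factor) rather than by independence over disjoint edge sets, and in the upper bound it invokes the one-term tail bound $\Pr(\mathrm{Bin}\ge k)\le\binom{N}{k}p^k$ directly rather than summing the geometric series.
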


\begin{proof}
For the lower bound, simply count the graphs on vertex set $[\ell]$, and with $2\ell - 3$ edges, in which every vertex $j \ge 3$ sends exactly two edges `backwards' in the order induced by $\ZZ$. It is easy to see, by induction on $t$, that the clique $K_t$ with vertex set $[t]$ is internally spanned, for each $t \in [\ell]$. The number of such graphs is
$$\prod_{j = 3}^\ell {j-1 \choose 2} \, \ge \,  \frac{(\ell!)^2}{2^\ell \ell^3} \, \ge \,  \frac{2\pi \ell^{2\ell-3}}{(2e^2)^\ell},$$
by Stirling's formula, and each is an \emph{induced} subgraph of $G_{n,p}$ with probability at least $p^{2\ell-3} (1 - p)^{\ell^2} \ge p^{2\ell-3}/2\pi$, where the bound $(1 - p)^{\ell^2} \ge e^{-3/2} > 1 / 2\pi$ follows since $p\ell^2 \le 1$ and $\ell \ge 3$. Since these events are mutually exclusive, the lower bound follows.

For the upper bound, recall that, by Lemma~\ref{2l-3}, if a graph $G$ internally spans $K_\ell$ then $e(G) \ge 2\ell - 3$. Since $\big( \frac{4\ell}{4\ell - 6} \big)^{2\ell - 3} \le e^3$, it follows that
$$P(\ell,p) \, \le \,  {\ell^2/2 \choose {2\ell - 3}} p^{2\ell-3} \, \le \,  \left( \frac{ e \ell }{ 4\ell - 6} \right)^{2\ell-3} \big(\ell p \big)^{2\ell-3} \, \le \,  4^3 \bigg( \frac{e}{4} \bigg)^{2\ell} \big(\ell p \big)^{2\ell-3},$$
as required.
\end{proof}

\subsection{The lower bound}

The following lemma, like Lemma~\ref{ALlem}, it is based on an idea of Aizenman and Lebowitz~\cite{AL}, who proved the corresponding result in the context of two-neighbour bootstrap percolation on $[n]^d$. The lower bound in Theorem~\ref{k=4} will follow by combining it with Lemma~\ref{Plp}.

\begin{lemma}\label{ALlemma}
Suppose that $\< G \>_{K_4} = K_n$, and let $1 \le L \le n$. There exists a clique $K \subset K_n$ which is internally spanned by $G$, with
$$L \, \le \, v(K) \, \le \, 3L.$$
\end{lemma}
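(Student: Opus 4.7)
The plan is to run the Clique Process on $G$, track the sequence of cliques produced, and catch the first moment a clique of size at least $L$ is created, in the spirit of Aizenman and Lebowitz. The first observation I need is that every clique $R_j$ ever maintained by the algorithm is internally spanned by $G$. This is immediate from the invariant $\< A_j \>_{K_4} = R_j$ together with $A_j \subset E(G)$: every edge of $A_j$ has both endpoints in $V(R_j)$, so $A_j \subset G \cap E(R_j)$, and hence $\< G \cap R_j \>_{K_4} \supset \< A_j \>_{K_4} = R_j$; the reverse inclusion is automatic, since edges produced by the $K_4$-process applied to a set of edges inside $V(R_j)$ stay inside $V(R_j)$ (the other three edges of any completing $K_4$ force all four vertices into $V(R_j)$).

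The second observation is a crude but sufficient bound on how much a clique can grow in a single merge step. In a type~(2) step, two cliques $R_i, R_j$ are replaced by the clique on $V(R_i) \cup V(R_j)$, which has at most $v(R_i) + v(R_j)$ vertices; in a type~(3) step, three cliques are replaced by the clique on the union of their vertex sets, with at most $v(R_i) + v(R_j) + v(R_k)$ vertices. In particular, if every clique input to a merge has fewer than $L$ vertices, the resulting clique has at most $3(L-1) < 3L$ vertices.

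To conclude, I would assume $L \ge 3$, since when $L \in \{1,2\}$ any initial clique $R_j$ is already a copy of $K_2$ with $v(R_j) = 2 \in [L, 3L]$. At time zero each $R_j = K_2$ has size $2 < L$, while because $\< G \>_{K_4} = K_n$ is itself a single clique the Clique Process must terminate with a unique clique of size $n \ge L$. Let $K$ denote the clique produced by the first merge step whose output has size at least $L$. Every clique fed into this step has size strictly less than $L$, so by the second observation $v(K) \le 3L$, and by the first observation $K$ is internally spanned by $G$. Hence $L \le v(K) \le 3L$, as required.

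The main (mild) subtlety is verifying the internally-spanned invariant for every intermediate $R_j$ and checking that the Clique Process really does terminate with a single clique equal to $K_n$ when $\< G \>_{K_4} = K_n$; once this is in hand, the size-growth bookkeeping and the Aizenman--Lebowitz selection of $K$ are entirely routine, and importantly do not depend on the finer combinatorial structure of which merge step (pair or triple) actually produces $K$.
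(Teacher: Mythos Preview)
Your proof is correct and follows essentially the same approach as the paper: run the Clique Process and use the fact that the maximum clique size grows by at most a factor of three at each merge, so the first clique to reach size at least $L$ has size at most $3L$ and is internally spanned. The paper's version is terser, simply asserting the factor-of-three growth bound for $\max_j v(R_j)$ and concluding; your version spells out the internally-spanned invariant, the small-$L$ cases, and the termination-at-$K_n$ claim, but these are just the details implicit in the paper's three-line argument.
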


\begin{proof}[Proof of Lemma~\ref{ALlemma}]
Suppose that $\< G \>_{K_4} = K_n$, and run the Clique Process for $G$. At each step of the process, the value of $\max_{j \in [m]} v(R_j)$ increases by a factor of at most three. Hence, for every $L \in [n]$, there exists a clique $K \subset K_n$ with
$$L \, \le \, v(K) \, \le \, 3L,$$
which is internally spanned by $G$, as claimed.
\end{proof}

We remark that this result does not generalize to $K_r$-percolation for $r \ge 5$. In fact, it is not hard to construct a graph $G$ for which $\< G \>_{K_r} = K_n$, but no clique $K_\ell$ with $r < \ell < n$ is internally spanned.

We can now prove the lower bound on $p_c(n,K_4)$ in Theorem~\ref{k=4}. It follows easily from Lemmas~\ref{Plp} and~\ref{ALlemma}, using Markov's inequality.

\begin{prop}\label{k4:lower}
If $p^2 n \log n \le 16/e^5$, then
$$\Pr\Big( \< G_{n,p} \>_{K_4} = K_n \Big) \, \to \, 0$$
as $n \to \infty$.
\end{prop}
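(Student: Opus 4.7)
The plan is to apply the Aizenman--Lebowitz lemma (Lemma~\ref{ALlemma}) at scale $L = \log n$ and control the resulting expectation via Lemma~\ref{Plp}. Since the event $\{\langle G_{n,p}\rangle_{K_4}=K_n\}$ is monotone non-decreasing in $p$, I may take $p$ as large as the hypothesis allows, so set $p_0 > 0$ with $p_0^2 n\log n = 16/e^5$. If $G_{n,p_0}$ percolates then Lemma~\ref{ALlemma} applied with $L = \lceil\log n\rceil$ guarantees an internally spanned clique of size $\ell$ with $\log n \le \ell \le 3\log n$, so a union bound over such cliques gives
\[
\Pr\big(\langle G_{n,p_0}\rangle_{K_4}=K_n\big) \,\le\, \sum_{\ell=\lceil\log n\rceil}^{\lfloor 3\log n\rfloor}\binom{n}{\ell}\, P(\ell,p_0).
\]

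To estimate each summand I would combine $\binom{n}{\ell}\le (en/\ell)^\ell$ with Lemma~\ref{Plp} (whose hypothesis $p_0\ell^2\le 1$ holds for large $n$, since $p_0\ell^2 \le 36(\log n)^{3/2}/(e^{5/2}\sqrt{n}) \to 0$ throughout our range), and simplify to
\[
\binom{n}{\ell}\, P(\ell,p_0) \,\le\, \frac{64}{(\ell p_0)^3}\left(\frac{e^3 n p_0^2\,\ell}{16}\right)^{\!\ell}.
\]
Writing $\ell = c\log n$ with $c\in[1,3]$ and using $np_0^2 = 16/(e^5\log n)$, the base inside the $\ell$-th power simplifies to exactly $c/e^2$, giving $(c/e^2)^{c\log n}=n^{c(\log c-2)}$, while the prefactor satisfies $(\ell p_0)^{-3} = O\big(n^{3/2}/(\log n)^{3/2}\big)$. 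Each summand is therefore at most $C_0\, n^{g(c)}/(\log n)^{3/2}$, where $g(c):=\frac{3}{2}+c(\log c - 2)$ and $C_0$ is an absolute constant.

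The final step is to verify that $g(c) \le -\frac{1}{2}$ throughout $[1,3]$. Since $g'(c)=\log c - 1$ and $g$ is convex with interior minimum at $c=e$ (where $g(e)=\frac{3}{2}-e\approx -1.22$), the maximum of $g$ on $[1,3]$ occurs at the endpoint $c=1$, where $g(1)=-\frac{1}{2}$. Thus every summand is $O\big(n^{-1/2}/(\log n)^{3/2}\big)$, and summing over the $O(\log n)$ integer values of $\ell$ in the range yields $O\big(n^{-1/2}/\sqrt{\log n}\big)=o(1)$, as required. The main delicate point is the interplay between the window $[L,3L]$ imposed by Lemma~\ref{ALlemma} and the range in which $g<0$: the exponential decay $(c/e^2)^{c\log n}$ beats the polynomial nuisance factor $(\ell p_0)^{-3}\sim n^{3/2}$ only for $c$ in a bounded interval (roughly $(0.66,5.65)$), so the factor-$3$ expansion from $L$ to $3L$ only just fits inside this interval, which forces the scale $L = \log n$ up to a constant and explains why the specific constant $16/e^5$ in the hypothesis suffices.
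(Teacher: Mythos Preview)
Your proof is correct and follows essentially the same approach as the paper: apply Lemma~\ref{ALlemma} at scale $L=\log n$, union bound over internally spanned $\ell$-cliques with $\ell\in[L,3L]$, and control each term via Lemma~\ref{Plp} together with $\binom{n}{\ell}\le(en/\ell)^\ell$. The paper's write-up simply bounds each summand by the worst case $n^{3/2}e^{-2L}$ (your $c=1$) rather than parametrizing by $c$ and analysing $g(c)$, but the underlying computation and constants are identical.
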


\begin{proof}
Let $p^2 n \log n = 16/e^5$ and $L = \log n$. By Lemma~\ref{ALlemma}, if $\< G_{n,p} \>_{K_4} = K_n$ then there exists an internally spanned clique $R$ with $L \le v(R) \le 3L$. By Lemma~\ref{Plp}, the expected number of such cliques is at most
$$4^3 \sum_{\ell = L}^{3L} {n \choose \ell} \bigg( \frac{e}{4} \bigg)^{2\ell} \big(\ell p \big)^{2\ell-3} \, \le \, \sum_{\ell = L}^{3L} \left( \frac{4}{\ell p} \right)^3 \left( \frac{en}{\ell} \cdot \frac{e^2}{16} \cdot \ell^2 p^2 \right)^\ell \, \le \, \sum_{\ell = L}^{3L} n^{3/2} \left( \frac{\ell}{e^2 \log n} \right)^{\ell},$$
since $\ell p \gg n^{-1/2}$. Thus
$$\sum_{\ell = L}^{3L} n^{3/2} \left( \frac{\ell}{e^2 \log n} \right)^{\ell} \, \le \,  3L \cdot  n^{3/2} e^{-2L} \, \to \,  0$$
as $n \to \infty$, as required.
\end{proof}

\subsection{The upper bound}

We shall use the second moment method (and Lemma~\ref{Plp}) in order to show that $G_{n,p}$ internally spans a clique of order $\sim \log n$ with high probability. We will then deduce the upper bound in Theorem~\ref{k=4} using sprinkling.

Let $X(\ell,p)$ denote the random variable which counts the number of copies of $K_\ell$ which are internally spanned by $G_{n,p}$. We first bound the expected value of $X(\ell,p)$.

\begin{lemma}\label{exX}
For every $n \in \N$, $3 \le \ell \in \N$ and $p \in (0,1)$ with $p \ell^2 \le 1$,
$$\Ex\big( X(\ell,p) \big) \, \ge \,  \left( \frac{p^2 n \ell}{2e^2}  \right)^{\ell} \left( \frac{1}{\ell p} \right)^3.$$
\end{lemma}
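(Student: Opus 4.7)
The statement asks for a lower bound on $\Ex(X(\ell,p))$, and essentially all the heavy lifting has already been done in Lemma~\ref{Plp}: the plan is simply to combine linearity of expectation with the lower bound on the single-clique probability $P(\ell,p)$ given there. There is no obstacle; the proof is a one-line calculation.

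First, I would write
\[
\Ex\big( X(\ell,p) \big) \,=\, {n \choose \ell} \cdot P(\ell,p),
\]
which holds by linearity of expectation, since $X(\ell,p)$ counts vertex sets of size $\ell$ whose induced subgraph internally spans the clique on that vertex set, and by vertex-transitivity each such set contributes $P(\ell,p)$.

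Next, I would apply the standard lower bound ${n \choose \ell} \ge \big( n/\ell \big)^\ell$ (valid whenever $\ell \le n$, since $\frac{n-i}{\ell-i} \ge \frac{n}{\ell}$ for each $0 \le i \le \ell - 1$), together with the lower bound of Lemma~\ref{Plp}, to obtain
\[
\Ex\big( X(\ell,p) \big) \,\ge\, \bigg( \frac{n}{\ell} \bigg)^\ell \bigg( \frac{1}{2e^2} \bigg)^\ell \big( \ell p \big)^{2\ell - 3}.
\]
Finally, pulling out a factor of $(\ell p)^3$ from the denominator and collecting the remaining factors gives
\[
\Ex\big( X(\ell,p) \big) \,\ge\, \frac{1}{(\ell p)^3} \cdot \bigg( \frac{n}{\ell} \cdot \frac{(\ell p)^2}{2e^2} \bigg)^\ell \,=\, \bigg( \frac{p^2 n \ell}{2e^2} \bigg)^\ell \bigg( \frac{1}{\ell p} \bigg)^3,
\]
which is exactly the claimed bound. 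The hypothesis $p\ell^2 \le 1$ is needed only to invoke Lemma~\ref{Plp}; no additional work is required here.
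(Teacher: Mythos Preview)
Your proof is correct and essentially identical to the paper's own argument: both invoke Lemma~\ref{Plp} together with the bound $\binom{n}{\ell} \ge (n/\ell)^\ell$ and rearrange. The paper's version is just terser.
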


\begin{proof}
By Lemma~\ref{Plp}, we have
\begin{eqnarray*}
\Ex\big( X(\ell,p) \big) \, \ge \,  {n \choose \ell} \left( \frac{1}{2e^2} \right)^{\ell} (\ell p )^{2\ell-3}  \, \ge \,   \left( \frac{n}{\ell} \cdot \frac{1}{2e^2} \cdot \ell^2 p^2 \right)^{\ell} \left( \frac{1}{\ell p} \right)^3
\end{eqnarray*}
as required.
\end{proof}

To bound the variance of $X(\ell,p)$, we shall use the following extension of Lemma~\ref{2l-3}. Given cliques $S \subset R$, let
$$D(S,R) \,:= \, \Big\{ \big\< (G_{n,p} \cup S) \cap R \big\>_{K_4} = R \Big\}$$
denote the event that $R$ is internally spanned by $G_{n,p} \cup S$. Lemma~\ref{2l-3} is equivalent to the case $v(S) = 3$ of the following lemma.

\begin{lemma}\label{Dext}
If $D(S,R)$ holds, then $e\big( (G_{n,p} \setminus S) \cap R  \big) \,\ge\, 2\big( v(R) - v(S) \big)$.
\end{lemma}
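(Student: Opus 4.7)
The approach is to extend the Clique Process proof of Lemma~\ref{2l-3} so as to track the seed $S$ throughout the process. I shall run a modified Clique Process on $(G_{n,p} \cup S) \cap R$ in which the clique $S$ is promoted to a single atomic initial object $(S, \emptyset)$, while every edge $e \in G' := (G_{n,p} \setminus S) \cap R$ enters as its own singleton $(\{e\}, \{e\})$. Since $D(S,R)$ holds, any legal run of this modified process terminates with a unique final clique equal to $R$, whose accumulated ancestor edge set $A$ is precisely $G'$; moreover, at every intermediate stage the atomic seed $S$ lies inside at most one of the current cliques.

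The heart of the argument is to maintain, by induction on the number of merges performed, the following two-part invariant on each current pair $(T, A)$: if $S \not\subseteq T$ then $e(A) \ge 2v(T) - 3$, and if $S \subseteq T$ then $e(A) \ge 2(v(T) - v(S))$. For $v(S) \ge 3$ (the regime in which the lemma will be applied) both invariants hold at step $0$ with equality, since each singleton $(\{e\},\{e\})$ has $v(T) = 2$ and $e(A) = 1 = 2v(T) - 3$, while the seed object $(S, \emptyset)$ has $v(T) = v(S)$ and $e(A) = 0 = 2(v(T) - v(S))$.

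The inductive step splits into four sub-cases, according to whether a pair or a triple is being merged and whether the unique seed-containing current clique is among them. Two vertex inequalities drive the calculation: $v(T) \le v(T_i) + v(T_j) - 2$ for a pair-merge (since $|V(T_i) \cap V(T_j)| \ge 2$), and $v(T) \le v(T_i) + v(T_j) + v(T_k) - 3$ for a triple-merge (by inclusion-exclusion together with the existence of three distinct pairwise-intersection vertices forming the triangle). Substituting the inductive edge bounds into each sub-case yields, respectively, $e(A) \ge 2v(T) - 2$, $e(A) \ge 2(v(T) - v(S)) + 1$, $e(A) \ge 2v(T) - 3$, and $e(A) \ge 2(v(T) - v(S))$; in all four cases the appropriate part of the invariant is preserved, with the two triple-merge bounds being tight. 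At termination the unique surviving clique is $R$, which contains $S$, so invariant (b) applied with $T = R$ and $A = G'$ gives $e(G') \ge 2(v(R) - v(S))$, as required.

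The main obstacle I anticipate is the triple-merge bookkeeping: one must verify that the vertex inequality $v(T) \le v(T_i) + v(T_j) + v(T_k) - 3$ remains valid even when the pairwise intersections happen to be larger than~$1$. Letting $d$ be the number of vertices in the triple intersection $V(T_i) \cap V(T_j) \cap V(T_k)$ and $f$ be the number of vertices lying in exactly two of the three cliques, the existence of three distinct triangle vertices forces $2d + f \ge 3$, which is exactly what is required. Once this small combinatorial observation is in hand, the remaining arithmetic in each sub-case is routine and the lemma follows.
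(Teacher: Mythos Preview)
Your proof is correct and takes essentially the same approach as the paper: both run the Clique Process with $S$ inserted as a pre-formed atomic clique, and both rely on the pair of bounds $e(A)\ge 2v(T)-3$ for cliques not containing the seed (this is just Lemma~\ref{2l-3}) and $e(A)\ge 2(v(T)-v(S))$ for the unique seed-descended clique. The only organisational difference is that the paper argues by backward induction on $v(R)$, looking at the penultimate step of the process and recursing, whereas you push the same invariant forward through the merges; unwinding the paper's recursion yields exactly your four sub-cases. Your verification of the triple-merge vertex inequality via $2d+f\ge 3$ is slightly more explicit than the paper's one-line ``since $T$, $U$ and $W$ form a triangle'', but the content is the same. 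One minor remark: the restriction to $v(S)\ge 3$ is unnecessary (and the lemma is in fact applied with $v(S)=k\ge 2$ in Lemma~\ref{var}); your base case already works for $v(S)=2$, since an edge $e\in G'=(G_{n,p}\setminus S)\cap R$ cannot coincide with the unique edge of $S$.
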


\begin{proof}
We shall use induction on $\ell = v(R)$. Suppose that $D(S,R)$ holds, and apply the Clique Process, except starting with the clique $S$ already formed. Suppose at the penultimate step we have two disjointly internally spanned cliques, $T = \< A \cup S \>_{K_4}$ and $U = \< B \>_{K_4}$, where $A \cap S = A \cap B = \emptyset$. (That $A$ and $B$ may be taken to be disjoint follows by the comment after the Clique Process.) By the induction hypothesis, we have
$$e\big( (G_{n,p} \setminus S) \cap R  \big) \, \ge \, e(A) + e(B) \, \ge \, 2 \big( v(T) - v(S) \big) + 2 v(U) - 3 \, > \, 2\big( v(R) - v(S) \big),$$
since $|T \cap U| \ge 2$. The case of three cliques $T = \< A \cup S \>_{K_4}$, $U = \< B \>_{K_4}$ and $W = \< C \>_{K_4}$ is similar; we obtain
\begin{eqnarray*}
e\big( (G_{n,p} \setminus S) \cap R  \big) & \ge & e(A) + e(B) + e(C) \\
& \ge & 2 \big( v(T) - v(S) \big) + 2 \big( v(U) + v(W) \big) - 6 \, \ge \, 2\big( v(R) - v(S) \big),
\end{eqnarray*}
as required, since $T$, $U$ and $W$ form a triangle, so $v(T) + v(U) + v(W) \ge v(R) + 3$.
\end{proof}

We can now bound the variance of $X(\ell,p)$. Let $P(k,\ell) = \Pr\big( D(K_k,K_\ell) \big)$.

\begin{lemma}\label{var}
Let $n \in \N$, $4\ell \le \log n$, $p \ell^2 \le 1$ and $p^2 n \log n \ge 33/e$. Then
$$\Var\big( X(\ell,p) \big) \, \ll \,  \Ex\big( X(\ell,p) \big)^2$$
as $n \to \infty$.
\end{lemma}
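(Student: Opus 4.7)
The plan is to apply the standard second-moment method. Writing $X(\ell,p) = \sum_R \mathbf{1}[A_R]$, where the sum runs over the $\binom{n}{\ell}$ copies of $K_\ell$ in $K_n$ and $A_R$ is the (monotone) event that $R$ is internally spanned by $G_{n,p}$, I would expand
\[ \Var(X(\ell,p)) = \sum_{R,R'} \bigl[\Pr(A_R \cap A_{R'}) - \Pr(A_R)\Pr(A_{R'})\bigr] \]
and partition according to $k := |V(R) \cap V(R')|$. Pairs with $k \le 1$ contribute $0$ because $A_R$ and $A_{R'}$ then depend on disjoint sets of edges of $G_{n,p}$ and are therefore independent; the diagonal $R = R'$ contributes at most $\Ex(X(\ell,p))$, producing a term $1/\Ex(X(\ell,p))$ after normalising by $\Ex(X(\ell,p))^2$, which is $o(1)$ by Lemma~\ref{exX} under our hypotheses.

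For $R \ne R'$ with $k \in \{2, \ldots, \ell - 1\}$, the key step is the monotonicity bound
\[ \Pr(A_R \cap A_{R'}) \,\le\, P(\ell,p) \cdot P(k,\ell). \]
To see this, condition on $G_3 := G_{n,p} \cap \binom{V(R) \cap V(R')}{2}$: since $A_{R'}$ is monotone increasing in $G_{n,p}$, for every $H \subseteq K_k$ we have $\Pr(A_{R'} \mid G_3 = H) \le \Pr(A_{R'} \mid G_3 = K_k) = P(k,\ell)$, and combining with the outer expectation $\Ex[\Pr(A_R \mid G_3)] = P(\ell,p)$ yields the bound. Using the pair count $\binom{n}{\ell}\binom{\ell}{k}\binom{n-\ell}{\ell-k}$, this gives
\[ \frac{\Var(X(\ell,p))}{\Ex(X(\ell,p))^2} \,\le\, \frac{1}{\Ex(X(\ell,p))} \,+\, \sum_{k=2}^{\ell-1} \frac{\binom{\ell}{k}\binom{n-\ell}{\ell-k}}{\binom{n}{\ell}} \cdot \frac{P(k,\ell)}{P(\ell,p)}. \]

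To finish, I would apply Lemma~\ref{Dext} --- which, by a first-moment count of the $G_{n,p}$-edges forced inside $K_\ell \setminus K_k$, gives $P(k,\ell) \le \binom{\binom{\ell}{2}-\binom{k}{2}}{2(\ell-k)} p^{2(\ell-k)}$ --- the lower bound $P(\ell,p) \ge (2e^2)^{-\ell}(\ell p)^{2\ell-3}$ from Lemma~\ref{Plp}, and the routine estimate $\binom{\ell}{k}\binom{n-\ell}{\ell-k}/\binom{n}{\ell} \le \ell^{2k}/(k!(n-\ell)^k)$. After reparametrising by $s := \ell - k$, each summand reduces to a product of an exponential factor in $\ell$ of order $(2e^2/(p^2 n))^\ell$ and an exponential factor in $s$ of order $(e^2 \ell^2 p^2 n/(16 s^2))^s$ (times polynomial corrections). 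The main obstacle is the book-keeping: these two exponentials pull in opposite directions, and the hypotheses $p^2 n \log n \ge 33/e$ together with $4\ell \le \log n$ and $p\ell^2 \le 1$ are calibrated precisely so that their product is small relative to $\Ex(X(\ell,p))$, even at the worst value of $s$. A direct optimisation in $s$ together with summing the $O(\log n)$ terms then delivers the desired $o(1)$ bound.
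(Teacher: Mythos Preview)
Your approach is essentially the paper's: partition pairs $(R,R')$ by $k=|V(R)\cap V(R')|$, use the monotone coupling to get $\Pr(A_R\cap A_{R'})\le P(\ell,p)\,P(k,\ell)$, bound $P(k,\ell)$ via Lemma~\ref{Dext}, and bound $P(\ell,p)$ from below via Lemma~\ref{Plp}/\ref{exX}. Your justification of the coupling inequality (conditioning on the edges in the overlap and using monotonicity) is in fact more carefully stated than the paper's one-line ``by considering ordered pairs''.

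Where your sketch wobbles is exactly the part you flag as ``book-keeping''. The summand form you write down, $(2e^2/(p^2n))^{\ell}\cdot (e^2\ell^2 p^2 n/(16s^2))^{s}$, does not follow from the combinatorial estimate $\binom{\ell}{k}\binom{n-\ell}{\ell-k}/\binom{n}{\ell}\le \ell^{2k}/(k!(n-\ell)^{k})$ you quote, since that bound is naturally indexed by $k$, not $s$; and as written the factor $(2e^2/(p^2n))^{\ell}$ is missing an $\ell$ in the denominator, which matters because under the hypotheses $p^2n$ can be as small as $33/(e\log n)$, making $(2e^2/(p^2n))^{\ell}$ super-polynomial in $n$. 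The paper avoids this by \emph{not} reparametrising: it keeps $k$ as the summation index, uses the sharper inequality $(\ell+k)^{\ell-k}\le e^{2k}(\ell-k)^{\ell-k}$, and arrives at a bound of the shape $(\ell p)^3\,(e^5/4)^{\ell}\sum_k (16/(kp^2n))^{k}$. The sum is then handled in one line via $(1/(Cx))^{x}\le e^{1/(Ce)}$ with $x=k$ and $C=p^2n/16$, giving $\sum_k(16/(kp^2n))^{k}\le \ell\,e^{16/(ep^2n)}$; the hypothesis $p^2n\log n\ge 33/e$ is tuned precisely so that $e^{16/(ep^2n)}\le n^{16/33}$, while $4\ell\le\log n$ forces $(\ell p)^3(e^5/4)^{\ell}\ll n^{-1/2}$. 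If you redo your final step keeping $k$ (or at least correcting the $\ell$-factor), the same arithmetic goes through.
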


\begin{proof}
We first claim that
$$\Var\big( X(\ell,p) \big) \, \le \,  \sum_{k = 2}^\ell \Ex\big( X(\ell,p) \big) {\ell \choose k} {n \choose {\ell-k}} P(k,\ell).$$
This follows by considering ordered pairs $(S,T)$ of internally spanned $\ell$-cliques which intersect in a $k$-clique. By Lemma~\ref{Dext}, if $D(S \cap T,T)$ holds then there are at least $2(\ell - k)$ edges of $G_{n,p}$ in $T \setminus S$, and so
$$P(k,\ell) \, \le \,  {(\ell^2 - k^2)/2 \choose {2\ell - 2k}} p^{2(\ell-k)} \, \le \,  \left( \frac{e(\ell+k) p}{4} \right)^{2(\ell - k)}.$$
Thus, by Lemma~\ref{exX},
$$P(k,\ell) \, \le \,  \left( \frac{e(\ell+k) p}{4} \right)^{2(\ell - k)} \left( \frac{2e^2}{p^2 n \ell}  \right)^{\ell} \left( \ell p \right)^3 \Ex\big( X(\ell,p) \big).$$
Now, using the facts that $k \le \ell$ and $(\ell+k)^{\ell-k} \le e^{2k} (\ell-k)^{\ell-k}$, an easy calculation gives that
$${\ell \choose k} {n \choose {\ell-k}} \left( \frac{e(\ell+k) p}{4} \right)^{2(\ell - k)} \left( \frac{2e^2}{p^2 n \ell} \right)^\ell$$
is at most
$$\left( en \cdot \frac{e^2 (\ell + k) p^2}{16} \cdot \frac{2e^2}{p^2 n \ell} \right)^\ell \left( e^2 \cdot \frac{e\ell}{k} \cdot \frac{1}{en} \cdot \frac{16}{e^2 (\ell + k) p^2} \right)^k \, \le \, \left( \frac{e^5}{4} \right)^\ell  \left( \frac{16}{k p^2 n} \right)^k,$$
and hence
$$\Var\big( X(\ell,p) \big) \, \le \, \Ex\big( X(\ell,p) \big)^2  \left( \ell p \right)^3 \left( \frac{e^5}{4} \right)^\ell  \sum_{k = 2}^\ell  \left( \frac{16}{k p^2 n} \right)^k.$$
Finally, recall that $4\ell \le \log n$ and $p^2 n \log n \ge 33/e$, and observe that therefore $\left( \ell p \right)^3 \big( \frac{e^5}{4} \big)^\ell \ll 1 / \sqrt{n}$. Thus, using the fact that $(1/Cx)^x \le e^{1/Ce}$, we obtain
$$\Var\big( X(\ell,p) \big) \, \le \, \Ex\big( X(\ell,p) \big)^2 \cdot \frac{\ell}{\sqrt{n}} \exp\left( \frac{16}{e p^2 n} \right) \, \ll \, \Ex\big( X(\ell,p) \big)^2,$$
as required.
\end{proof}

Using Chebyshev's inequality, and sprinkling, we can now deduce the following result.

\begin{prop}\label{k4:upper}
If $p^2 n \log n \ge (24)^2$, then
$$\Pr\Big( \< G_{n,p} \>_{K_4} = K_n \Big) \, \to \, 1$$
as $n \to \infty$.
\end{prop}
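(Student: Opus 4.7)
The plan is to combine the second-moment method (via Lemmas~\ref{exX} and~\ref{var}) with multi-stage sprinkling, in the spirit of the final step of the proof of Proposition~\ref{prop:bal}. Realise $G_{n,p}$ as the union of independent random graphs with densities $p_0, q_0, \ldots, q_{T-1}, q_*$ whose sum is at most $p$; the standard coupling then places this union inside $G_{n,p}$. Throughout I use the following deterministic \emph{absorption principle}: if $K$ is a clique in $\<G\>_{K_4}$ and some vertex $v \notin V(K)$ has two $G$-neighbours $u_1, u_2 \in V(K)$, then $V(K) \cup \{v\}$ is a clique in $\<G\>_{K_4}$. Indeed, for every third $u_3 \in V(K) \setminus \{u_1, u_2\}$, five of the six edges of the $K_4$ on $\{v, u_1, u_2, u_3\}$ are present, so $vu_3$ is infected.

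\emph{Seed (Stage 1).} Take $\ell_0 = \lfloor (\log n)/4 \rfloor$ and $p_0 \approx 8/\sqrt{n \log n}$, so that $p_0^2 n \log n > 8 e^2$ (which is possible because $p^2 n \log n \ge 576$). The hypotheses of Lemmas~\ref{exX} and~\ref{var} then hold, giving $\Ex(X(\ell_0, p_0)) \to \infty$ (the base of the exponential $p_0^2 n \ell_0/(2e^2)$ exceeds $1$) and $\Var(X(\ell_0,p_0)) = o(\Ex(X(\ell_0,p_0))^2)$; by Chebyshev, with high probability $G_{n,p_0}$ internally spans a clique $K^{(0)}$ of order $\ell_0$.

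\emph{Doubling (Stage 2).} Given a clique $K^{(i)}$ in the closure of order $\ell_i \le n/\log n$, sprinkle with density $q_i = 2/\sqrt{n\ell_i}$. The number $N_i$ of outside vertices with at least two sprinkled edges into $V(K^{(i)})$ is a sum of $n - \ell_i$ \emph{independent} indicators (those for distinct $v$ depend on disjoint edge sets), and because $\ell_i q_i = O(\sqrt{\ell_i/n}) = o(1)$ we have $\Ex(N_i) \sim (n-\ell_i)\binom{\ell_i}{2}q_i^2 \sim 2\ell_i$. A Chernoff bound yields $N_i \ge \ell_i$ with probability $1 - e^{-\Omega(\ell_i)}$, and the absorption principle produces a clique $K^{(i+1)}$ of order $\ge 2\ell_i = \ell_{i+1}$. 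Iterate for $T = O(\log n)$ stages until $\ell_T \ge n/\log n$; a union bound over the stages controls the total failure probability. The sprinkling budget telescopes:
\[
\sum_{i=0}^{T-1} q_i \;\le\; \frac{2}{\sqrt{n\ell_0}}\sum_{i\ge 0} 2^{-i/2} \;=\; \frac{4+2\sqrt 2}{\sqrt{n\ell_0}} \;\approx\; \frac{13.7}{\sqrt{n\log n}}.
\]

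\emph{Final cover (Stage 3).} Sprinkle with density $q_* = C \log^2 n / n$ for a large constant $C$. Each vertex outside $V(K^{(T)})$ has $\mathrm{Bin}(n/\log n, q_*)$ sprinkled edges into $V(K^{(T)})$, with mean $\ge C \log n$, so the probability of fewer than two such edges is $O(\log n \cdot n^{-C}) = o(1/n)$; the union bound over outside vertices together with the absorption principle then gives $\<G_{n,p}\>_{K_4} = K_n$ with high probability, and $q_* \ll 1/\sqrt{n\log n}$ is negligible in the budget. The main obstacle is the constant-tracking required to fit $p_0 + \sum q_i + q_* \le p$ under $p \ge 24/\sqrt{n\log n}$: this forces $\ell_0$ as large as Lemma~\ref{var} permits (shrinking $\ell_0$ inflates the Stage~2 sum), after which the total $(8 + 13.7 + o(1))/\sqrt{n\log n}$ fits under $24/\sqrt{n\log n}$. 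A secondary subtlety is that the Stage~2 absorption count relies on $\ell q = o(1)$, which is why doubling halts at $\ell_T \sim n/\log n$ and a differently-tuned Stage~3 handles the remaining regime where $\ell q \sim \log n$.
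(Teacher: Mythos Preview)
Your proposal is correct and follows essentially the same strategy as the paper: a second-moment seed of size $\sim(\log n)/4$ via Lemmas~\ref{exX} and~\ref{var}, followed by geometric growth through iterated sprinkling using the absorption principle, and a final sprinkling round to absorb the remaining vertices. The only differences are cosmetic parameter choices---the paper quadruples the clique at each stage with densities $p_j = 2^{-j+2}p$ (summing to $4p$) and uses a full extra copy of $G_{n,p}$ for the final step, arriving at total density $6p$ with $p = 4/\sqrt{n\log n}$, whereas you double with adapted densities $q_i = 2/\sqrt{n\ell_i}$ and use a much lighter final sprinkle; both bookkeeping schemes land comfortably under $24/\sqrt{n\log n}$.
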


\begin{proof}
Set $4\ell = \log n$ and $p^2 n \log n = 16 > 33/e$, and observe that the conditions of Lemmas~\ref{exX} and~\ref{var} are satisfied. By Lemma~\ref{exX}, we obtain
$$\Ex\big( X(\ell,p) \big) \, \ge \, \left( \frac{p^2 n \ell}{2e^2}  \right)^{\ell} \left( \frac{1}{\ell p} \right)^3 \, \ge \, \left( \frac{2}{e^2}  \right)^{\log n / 4} n^{3/2 - o(1)} \, \to \, \infty$$
as $n \to \infty$. Thus, by Lemma~\ref{var} and Chebyshev's inequality, with high probability there exists a copy of $K_\ell$ which is internally spanned by $G_{n,p}$.

Now let $G_0 = G_{n,p}$ be a random graph with density $p$, and for each $j \in \N$ set $p_j = 2^{-j+2} p$ and let $G_j = G_{n,p_j}$ be a random graph with density $p_j$, chosen independently of all others. We make the following claim.

\medskip
\noindent \textbf{Claim 1:} There exists an $\eps > 0$ such that the following holds. If $t := 2^{2j-2} \ell \le \eps n$, then $\< G_j \cup K_t \>_{K_4}$ contains a clique of size $4t$ with probability at least $1 - e^{-t/8}$.

\begin{proof}[Proof of Claim 1]
Observe that every vertex $v$ that has at least two neighbours in $K_t$ (in the graph $G_j$) is added to the clique in $\< G_j \cup K_t \>_{K_4}$. It therefore suffices to show that there are at least $3t$ such vertices, with high probability. The expected number of such vertices is at least
$$\frac{3n}{4} \cdot {t \choose 2} p_j^2 (1 - p_j)^{t-2} \, \ge \, t p^2 n \ell \, = \, 4t$$
since $p_j t = 2^{j} \ell p = 2p \sqrt{\ell t} = O(\sqrt{\eps})$.

This event (having two neighbours) is independent for each vertex. Thus, by Chernoff's inequality, with probability at least $1 - e^{-t/8}$, the number of such vertices is at least $3t$, as required.
\end{proof}

We apply the claim for each $j \ge 0$. It follows that, with high probability, $\< G_0 \cup \bigcup_{j = 1}^\infty G_j \>_{K_4}$ contains a clique of order $\eps n$, for some $\eps > 0$. Finally, let $G_0'$ be another independent copy of $G_{n,p}$.

\medskip
\noindent \textbf{Claim 2:} For every $\eps > 0$, if $t \ge \eps n$ then $\< G_{n,p} \cup K_t \>_{K_4} = K_n$ with high probability.

\begin{proof}[Proof of Claim 2]
We apply the same argument as in the proof of Claim~1. Indeed, the probability that a vertex $v$ has at most one neighbour in $K_t$ is at most
$$(1 - p)^t + tp(1-p)^{t-1} \,\le\, \big( 1 + 2tp \big) e^{-tp} \,\ll\, \frac{1}{n^2}$$
as $n \to \infty$. Hence, by Markov's inequality, the probability that there exists such a vertex is at most $1/n$, as required.
\end{proof}

To complete the proof, we simply note that the graph $G = G_0 \cup G_0' \cup \bigcup_{j = 1}^\infty G_j$ is a random graph $G_{n,p^*}$ of density
$$p^* \,\le\, 2p + \sum_{j = 1}^\infty 2^{-j+2} p \, = \, 6p,$$
and $G_{n,p^*}$ percolates in the $K_4$-process with high probability, as required.
\end{proof}

Theorem~\ref{k=4} follows immediately from Propositions~\ref{k4:lower} and~\ref{k4:upper}.

\section{Other graphs, and open problems}\label{othersec}

In this section we shall mention some simple results for graphs other than $K_r$, and state several of the many open problems relating to this model. Since the results will all be fairly straightforward, we shall only sketch the proofs. We being by stating a simple extension of the (trivial) result for the $K_3$-process mentioned in the Introduction.

\begin{prop}
Let $H = C_k$ for some $k \ge 3$, or $H = K_{2,3}$. Then,
$$p_c(n,H) \,=\, \frac{\log n}{n} \,+\, \Theta\bigg( \frac{1}{n} \bigg).$$
\end{prop}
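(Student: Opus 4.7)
Part 1 (Lower bound). Both $C_k$ (for $k \ge 3$) and $K_{2,3}$ have minimum degree at least $2$, so every copy of $H$ containing an edge at a vertex $v$ uses at least one further edge incident to $v$. Hence if $v$ is isolated in $G_0$, it must remain isolated throughout the $H$-process, and $G$ cannot percolate. The classical Poisson approximation gives $\Pr(\text{no isolated vertex in }G_{n,p}) \to e^{-e^{-c}}$ at $p = (\log n + c)/n$; this is below $1/2$ for any $c < -\log\log 2$, yielding $p_c(n,H) \ge \log n/n + c_1/n$ with the positive constant $c_1 = -\log\log 2$.

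Part 2 (Upper bound; structure). I fix a large constant $C$, set $p = (\log n + C)/n$, and decompose $G = G_{n,p}$ into its $2$-core $G^*$ and the set of pendant vertices. A first-moment computation at this density shows that w.h.p.\ every pendant path has length $1$: the expected number of pendant paths on $\ell + 1 \ge 3$ vertices (one leaf, $\ell - 1$ internal degree-$2$ vertices, then a vertex of $V(G^*)$) scales as $n^{1-\ell}(\log n)^\ell e^{-C\ell} = o(1)$ for $\ell \ge 2$. Hence the pendant part consists of $\Theta(\log n)$ single degree-$1$ vertices, each joined to $V(G^*)$ by one edge, and $|V(G^*)| = n - \Theta(\log n)$. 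The proof then has two steps: \textbf{(A)} show that $G^*$ percolates in the $H$-process to the clique on $V(G^*)$; and \textbf{(B)} absorb each pendant vertex $v$, with its unique neighbor $w \in V(G^*)$, into this clique via copies of $H$ on $v, w$, plus $|V(H)| - 2$ vertices of $V(G^*)$. Since $|V(G^*)| \gg |V(H)|$, every missing edge $vu$ (with $u$ first in $V(G^*)$, then another pendant) can be inferred by such a copy of $H$ whose other edges are already present in the clique, so (B) is a routine deterministic argument.

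Part 3 (The $2$-core; main obstacle). The content is step (A). For $H = C_k$, I would use that $G^*$ is Hamiltonian on its own vertex set w.h.p.\ at this density --- in the spirit of the Bollob\'as--Fenner--Frieze analysis of the $2$-core of $G_{n,p}$, even though $G$ itself is not Hamiltonian because of its degree-$1$ vertices --- and then argue deterministically that a Hamilton cycle $v_1 v_2 \cdots v_{n'} v_1$ percolates in the $C_k$-process: the first round infers every $v_i v_{i+(k-1)}$, and the set of ``differences'' $d$ for which $v_i v_{i+d}$ is infected is closed under taking length-$(k-1)$ signed sums and already contains $\pm 1$ after step zero, so it quickly grows to all of $\mathbb{Z}_{n'}$ and hence yields every edge. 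For $H = K_{2,3}$, a Hamilton cycle is not sufficient (no $K_{2,3} - e$ embeds in a pure cycle), so instead I would exploit the deterministic inference rule ``$a \sim x$ is forced whenever $a, b$ share two common neighbors and $b \sim x$''. Starting from any triangle of $G^*$ (and there are $\Theta((\log n)^3) \to \infty$ of them w.h.p.), closing it up to a small clique and then growing the clique vertex by vertex. The main difficulty is step (A) for $K_{2,3}$: there is no off-the-shelf structural theorem to quote, and one must carry out a careful union bound verifying that w.h.p.\ every vertex of $V(G^*)$ shares two common neighbors with some vertex of the current clique at every stage of the growth, so that the expansion never stalls.
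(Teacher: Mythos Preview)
Your lower bound is fine and matches the paper: since $C_k$ and $K_{2,3}$ are connected with $\delta(H)\ge 2$, percolation requires connectivity, and the connectivity threshold is $\frac{\log n}{n}+\Theta(1/n)$.

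For the upper bound you take a far heavier route than the paper, and there is a real gap in the $C_k$ case. You reduce to Hamiltonicity of the $2$-core and then assert that a bare Hamilton cycle percolates in the $C_k$-process. That assertion is false when $k$ is even and the cycle length $n'$ is even: the set of attainable ``differences'' is indeed closed under signed $(k-1)$-fold sums, but when $k$ is even, $k-1$ is odd, and a sum of an odd number of odd integers is odd. Starting from $\pm 1$ you therefore never leave the odd residues modulo $n'$, and the closure of $C_{n'}$ under the $C_k$-process is only the complete bipartite graph on the two parity classes, not $K_{n'}$. You need at least one extra chord to break this parity, and your argument does not supply one. (Separately, Hamiltonicity of the $2$-core at $p=(\log n+C)/n$ is a nontrivial result to quote, not an elementary fact.) For $K_{2,3}$ you yourself flag that the growth-from-a-triangle argument is incomplete; note also that a single triangle gives each pair of its vertices only \emph{one} common neighbour, so your inference rule does not even fire from a triangle alone.

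The paper bypasses both the $2$-core and Hamiltonicity. It argues directly that, with high probability, connectivity already suffices for percolation. For $H=C_k$: a triangle with a path of length at least $k$ attached spans a clique on its vertex set --- labelling the vertices $1,\ldots,\ell$ with edges $\{i,i+1\}$ and the single extra edge $\{1,3\}$, one first infects all $(k-1)$-edges along the path, then uses the extra $2$-edge to reach $k$-edges, then all $2$-edges, $3$-edges, and so on. Since at this density $G_{n,p}$ is connected and contains many triangles, every vertex lies on such a configuration and is absorbed. For $H=K_{2,3}$: if $x$ and $y$ have vertex-disjoint paths to two adjacent vertices of some $C_4$, the process works along the paths and eventually infects $xy$; this yields a large complete bipartite graph, and one edge inside each part turns it into a clique that then absorbs the rest. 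Both arguments are short and use only elementary facts about $G_{n,p}$ near the connectivity threshold, with no structural decomposition and no Hamiltonicity.
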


\begin{proof}[Sketch of proof]
We shall show that, with high probability, the graph $G_{n,p}$ percolates in the $H$-bootstrap process if and only if it is connected. The bounds on $p_c(n,H)$ then follow by standard results, see~\cite{RG}.

Indeed, first let $H = C_k$ and consider a path of length at least $k$ attached to a triangle; we claim that this graph spans a clique (on its vertex set). To see this, identify the vertices with $[\ell]$ so that the edges are $\{ i(i+1) : i \in [\ell-1]\} \cup \{13\}$, and say that $ij$ is a $t$-edge if $|i - j| = t$. The edges are infected in the following order: $(k-1)$-edges, $k$-edges, $2$-edges, $3$-edges, $4$-edges, and so on. Finally, observe that if $G_{n,p}$ is connected then, with high probability, every vertex has a path of length at least $k$ leading to a triangle.

For $H = K_{2,3}$ the proof is similar. Let $x,y \in V(G_{n,p})$, and suppose that there exist vertex disjoint paths from $x$ and $y$ to adjacent vertices of a copy of $C_4$. Then it is easy to see that the percolation process works its way along these paths and eventually infects the edge $xy$. This gives a large complete bipartite graph, and if there is an edge in each part then the closure is a complete graph. Since $G_{n,p}$ is connected, every vertex is eventually swallowed by this clique.
\end{proof}

The case $H = K_{2,3}$ is the first we have seen for which $p_c(n,H) \ne n^{-1/\lambda(H) + o(1)}$. We shall now determine a large family of such graphs. Define
$$\lambda^*(H) \, := \, \min_{e \in E(H)} \max_{F \subset H - e} \left\{ \frac{e(F)}{v(F)} \right\}.$$
This parameter gives us a general lower bound on $p_c(n,H)$.

\begin{prop}\label{genlower}
For every graph $H$, there exists a constant $c(H)$ such that
$$p_c(n,H) \, \ge \, c(H) n^{-1/\lambda^*(H)}$$
for every $n \in \N$.
\end{prop}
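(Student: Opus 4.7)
The plan is a first-moment argument: below the stated threshold the $H$-process cannot even carry out its first infection step on $G_{n,p}$. Observe that an edge is added to $G_1$ if and only if $G_{n,p}$ contains a copy of $H - e$ for some $e \in E(H)$; if no such copy exists then $\< G_{n,p} \>_H = G_{n,p}$, which equals $K_n$ only on the negligible event $\{G_{n,p} = K_n\}$. Hence it suffices to bound $\Pr\big(\exists\, e \in E(H) : G_{n,p} \supseteq H - e\big)$ by a constant strictly less than $1/2 - \Pr(G_{n,p} = K_n)$.

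For each $e \in E(H)$ choose a subgraph $F_e \subseteq H - e$ with $e(F_e) \ge 1$ that achieves
$$\frac{e(F_e)}{v(F_e)} \, = \, \max_{F \subseteq H - e, \, e(F) \ge 1} \frac{e(F)}{v(F)} \, \ge \, \lambda^*(H),$$
with equality for at least one edge $e_0 \in E(H)$. Since $G_{n,p} \supseteq H - e$ implies $G_{n,p} \supseteq F_e$, Markov's inequality gives
$$\Pr\big( G_{n,p} \supseteq H - e \big) \, \le \, n^{v(F_e)} p^{e(F_e)}.$$
Taking $p \le c \, n^{-1/\lambda^*(H)}$ for a constant $c = c(H) \in (0,1)$, the right-hand side is at most
$$c^{e(F_e)} \, n^{v(F_e) - e(F_e)/\lambda^*(H)} \, \le \, c^{e(F_e)} \, \le \, c,$$
because $e(F_e)/v(F_e) \ge \lambda^*(H)$ makes the exponent of $n$ non-positive and $e(F_e) \ge 1$. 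Summing over the $e(H)$ edges of $H$ and taking $c(H)$ small enough yields $\Pr\big( \<G_{n,p}\>_H = K_n \big) < 1/2$, so $p_c(n,H) \ge c(H) \, n^{-1/\lambda^*(H)}$, as required.

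There is no real obstacle. The definition of $\lambda^*(H)$ is engineered so that the bottleneck edge $e_0$ contributes a bounded constant at the threshold, whereas every $e \neq e_0$ has $\max_F e(F)/v(F) > \lambda^*(H)$ and so contributes an $o(1)$ term. The one substantive point is that $F_e$ must be chosen as a \emph{densest} subgraph of $H - e$ rather than $H - e$ itself: applying Markov directly to $H - e$ would give only the possibly weaker exponent $e(H-e)/v(H-e)$, so it is the inner maximisation in the definition of $\lambda^*(H)$ that ensures the bound is tight enough.
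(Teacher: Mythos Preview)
Your proof is correct and follows essentially the same approach as the paper's: both argue that below the threshold no copy of $H-e$ appears in $G_{n,p}$ for any $e$, by bounding the expected number of copies of a densest subgraph $F_e \subseteq H-e$ and summing over $e \in E(H)$. Your write-up is slightly more careful (explicitly requiring $e(F_e)\ge 1$, noting the trivial event $G_{n,p}=K_n$, and explaining why one must use the densest subgraph rather than $H-e$ itself), but the argument is the same.
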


\begin{proof}[Sketch of proof]
We shall show that if $p \le c(H) n^{-1/\lambda^*(H)}$ then, with probability at least $1/2$, no new edges are infected in the $H$-bootstrap process. To do so, for each $e \in E(H)$ choose a subgraph $F = F(e) \subset H - e$ which maximizes $e(F) / v(F)$, and note that $e(F)/ v(F) \ge \lambda^*(H)$. Thus, the expected number of copies of $F$ in $G_{n,p}$ is at most
$$n^{v(F)} p^{e(F)} \, \le \, c(H) n^{v(F) - e(H) / \lambda^*(H)} \,\le\, c(H).$$
Summing over edges of $H$, we obtain
$$\Pr\Big( F(e) \subset G_{n,p} \text{ for some }e \in E(H) \Big) \, \le \,  e(H) c(H) \, < \,  \frac{1}{2},$$
if $c(H)$ is sufficiently small. But if $F(e) \not\subset G_{n,p}$ for every $e \in E(H)$ then $H - e \not\subset G_{n,p}$ for every $e \in E(H)$, and hence no new edges are infected, as claimed.
\end{proof}

We next show that Proposition~\ref{genlower} is sharp for a large class of graphs $H$.

\begin{prop}
If $H$ has a leaf, then
$$p_c(n,H) \, = \,  \Theta\left( n^{-1/\lambda^*(H)} \right).$$
\end{prop}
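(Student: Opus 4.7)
The lower bound is immediate from Proposition~\ref{genlower}. For the upper bound, my plan is as follows. Let $u$ be a leaf of $H$ adjacent to $v$, choose $e^* \in E(H)$ attaining the minimum in the definition of $\lambda^*(H) =: \mu$, and assume $p \ge C(H) n^{-1/\mu}$ for some sufficiently large $C(H) > 0$.

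First, by a standard random graph argument on copies of $H - e^*$ (whose maximum subgraph density is exactly $\mu$), with high probability $G_{n,p}$ contains a copy $\phi : V(H) \to V(K_n)$ of $H - e^*$. Since $\phi(E(H - e^*)) \subset G_{n,p}$, the $H$-process infects $\phi(e^*)$ in one step, producing a full copy of $H$ on $S := \phi(V(H))$ in the closure.

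Next, use the leaf $u$: for every $w \in V(K_n) \setminus S$, the copy of $H$ obtained from $\phi$ by setting $\phi(u) := w$ has $\phi(v)w$ as its only uninfected edge, so this edge is infected. Hence the full star at $\phi(v)$ lies in the closure. To finish, for each remaining edge $w_1 w_2$ (with $w_1, w_2 \ne \phi(v)$), I would exhibit an $H$-copy $\psi$ in which $\phi(v)$ plays $v$, the pair $\{w_1, w_2\}$ plays an edge of $H[V(H) \setminus \{u,v\}]$, and the other positions are filled from $S$ via a suitable automorphism of $H - u$; then every edge of $\psi(H)$ incident to $\phi(v)$ lies in the star, every other non-target edge lies in the copy of $H$ on $S$, and so $w_1 w_2$ is infected.

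The hard part will be this final cascade step: verifying that the required copy $\psi$ exists for every pair $(w_1, w_2)$. This uses the structure of $H - u$; in favourable cases (e.g.\ $H = K_r$ plus a pendant, or $H = P_k$) one can first iterate the leaf argument to infect stars at many vertices of $S$ using the automorphisms of $H - u$. When $H - u$ lacks the necessary symmetry (say, when $H$ itself is a star), I would instead seed the process with several independent copies of $H - e^*$, obtaining stars at many distinct centres simultaneously, and then finish by a direct cascade argument analogous to the one sketched for $C_k$; a final sprinkling of an independent copy of $G_{n,p}$ may be used to cover residual exceptional edges.
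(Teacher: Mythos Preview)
Your first three steps match the paper's: find a copy of $H-e^*$, complete it to a copy of $H$, and then use the leaf to infect the full star at the centre $c_1 := \phi(v)$. The gap is in your final cascade. In the map $\psi$ you propose for infecting $w_1w_2$, the vertices $w_1,w_2$ play the roles of some $a,b\in V(H)\setminus\{u,v\}$; but then every edge of $H$ from $a$ or $b$ to another vertex of $V(H)\setminus\{u,v\}$ is sent to an edge between $\{w_1,w_2\}$ and $S\setminus\{\phi(v)\}$, and such edges lie neither in the star at $\phi(v)$ nor in the copy of $H$ on $S$. So in general $\psi(H)$ is missing more than one edge and the step fails. (Your ``favourable case'' $H=K_r+e$ works for a different reason: there the symmetry of $H-u=K_r$ lets you first get a star at \emph{every} vertex of $S$, and only then infect $w_1w_2$; but this uses nothing like the $\psi$ you describe.)

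The paper avoids this difficulty altogether by iterating the star step rather than trying to cascade from a single star. Sprinkle $v(H)$ independent copies of $G_{n,p}$; in each one, find a fresh copy of $H-e$ and obtain a new star centre $c_j$. After $v(H)$ rounds you have centres $c_1,\dots,c_{v(H)}$, and since each $c_j$ is joined to everything, the induced graph on $\{c_1,\dots,c_{v(H)}\}$ is a clique $K_{v(H)}$. Now observe the purely deterministic fact $\langle K_{v(H)}\rangle_H = K_n$: given the clique and any outside vertex $w$, map $u\mapsto w$ and $H-u$ into the clique to infect each edge $wc_j$, enlarging the clique by one; repeat. Equivalently, once you have $v(H)-1$ star centres you can infect any edge $w_1w_2$ directly by sending $u\mapsto w_1$, $v\mapsto w_2$, and $V(H)\setminus\{u,v\}$ to the centres --- every non-target edge is then incident with a centre. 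Your fallback (``several copies \dots\ many centres'') is gesturing at this, but note that at $p=\Theta(n^{-1/\lambda^*(H)})$ a single $G_{n,p}$ may contain only $O(1)$ copies of $H-e^*$, so independent sprinkling rounds (not simultaneous copies) are what make this work.
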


\begin{proof}[Sketch of proof]
The lower bound follows from Proposition~\ref{genlower}. For the upper bound, let $p \gg n^{-1/\lambda^*(H)}$ and recall (see~\cite{RG}) that, with high probability, $H - e \subset G_{n,p}$ for some $e \in E(H)$. (To see this, let $e$ and $F \subset H - e$ be such that $e(F)/v(F) = \max_{F' \subset H - e} e(F')/v(F') = \lambda^*(H)$, find a copy of $F$ in $G_{n,p}$ by the second moment method, and then find $H - e$ by sprinkling.) Let $v_1$ be the neighbour of a leaf in $H$, and observe that we can infect every edge which is incident with $v_1$ (and is not in our copy of $H - e$).

Now, take a second, independent copy of $G_{n,p}$, and apply the same argument inside the neighbourhood of $v_1$. We find a vertex $v_2$ such that we can add (almost) all edges incident with $v_2$. Repeating this process $v(H)$ times, we find (with high probability) a clique on $v(H)$ vertices in $\< G_{n,p^*} \>_H$, where $p^* = v(H) p$.

Finally, observe that $\< K_{v(H)} \>_H = K_n$, since we may add the remaining vertices to the clique one by one. Thus $p = O\big(n^{-1/\lambda^*(H)} \big)$, as claimed.
\end{proof}

A slightly less trivial case, which lies somewhere between a clique and a tree, also matches the general lower bound in Proposition~\ref{genlower}. Say that $H$ is an \emph{$r$-clique-tree} if (for some $2 \le \ell \in \N$) it is composed of $\ell$ vertex-disjoint copies of $K_r$, plus $\ell - 1$ extra edges, and is connected.

\begin{prop}
Let $H$ be an $r$-clique tree. Then
$$c(H) n^{-1/\lambda^*(H)} \, \le \, p_c(n,H) \, \le \, n^{-1/\lambda^*(H)} \log n$$
for some $c(H) > 0$.
\end{prop}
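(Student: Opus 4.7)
The lower bound is immediate from Proposition~\ref{genlower}, so I focus on the upper bound. Set $p = n^{-1/\lambda^*(H)}\log n$, and label the cliques of the $r$-clique tree $H$ as $Q_1,\dots,Q_\ell$. Pick $e \in E(H)$ and a densest subgraph $F \subseteq H-e$ realising $e(F)/v(F) = \lambda^*(H)$; the second moment method produces a copy of $F$ in $G_{n,p/3}$ whp, and a further round of sprinkling extends it to a copy of $H-e$, after which $e$ is infected and $H \subseteq \langle G_{n,p/3}\rangle_H$ on some vertex set $U$ of size $v(H)$.

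Next I would verify that $\langle H\rangle_H$ restricted to $U$ is already the complete graph $K_{v(H)}$. For any $u \in Q_i$ and $v \in Q_j$ with $uv \notin E(H)$, let $f$ be any bridge of $H$ lying on the path from $Q_i$ to $Q_j$ in the tree of cliques; the $H$-copy using the same cliques $Q_1,\dots,Q_\ell$ as $H$, but with $uv$ substituted for $f$ as a bridge, is contained in $H \cup \{uv\}$ and has $uv$ as its unique missing edge, so $uv$ becomes infected. Hence $K_{v(H)} \subseteq \langle G_{n,p/3}\rangle_H$ on $U$.

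The key new ingredient is an absorption lemma replacing the leaf argument used for graphs with a leaf: suppose $K_m \subseteq \langle G\rangle_H$ is a clique with $m \ge v(H)-1 = r\ell - 1$ and $v \notin K_m$ has at least $r-1$ neighbours in $K_m$ in $G$. Then every edge from $v$ to $K_m$ becomes infected. Indeed, $v$ together with any $r-1$ of its $G$-neighbours in $K_m$ forms a $K_r$ in $\langle G\rangle_H$ (the $r-1$ neighbours are pairwise adjacent in $\langle G\rangle_H$ because $K_m$ is a clique there), and this $K_r$ plays the role of the leaf clique $Q_\ell$ of an $H$-copy; the remaining cliques $Q_1,\dots,Q_{\ell-1}$ embed disjointly in the $m-(r-1) \ge r(\ell-1)$ other vertices of $K_m$, and for any target $w \in K_m$ we choose $Q_{\ell-1}$ to contain $w$ so that the bridge of this $H$-copy is precisely $vw$.

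Finally, I would iteratively grow the initial clique to the whole of $K_n$ by sprinkling with $O(\log n)$ further independent copies of $G_{n,p/C}$ for a large constant $C$, applying the absorption lemma at each round. The main obstacle is the initial phase, since a $v(H)$-vertex clique is far too small for direct absorption at this density to produce any new vertices. To jump-start the process, I would use that $G_{n,p/3}$ contains many almost-disjoint copies of $H-e$ whp, each yielding its own $K_{v(H)}$ in the closure by the argument above; any two disjoint $K_{v(H)}$'s can then be merged by choosing a $K_r$ in each to play the roles of $Q_1$ and $Q_\ell$ in an $H$-copy whose bridge is the connecting cross-edge. Once the merged clique has grown to polynomial size, each sprinkling round enlarges it by a constant factor (by a second-moment / Chernoff analysis in the style of Section~\ref{k4sec}), eventually reaching $\Omega(n)$, after which one final sprinkling round absorbs every remaining vertex.
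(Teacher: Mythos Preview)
Your argument has two genuine gaps.

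\medskip
\textbf{The substitution in Step 3 does not give a copy of $H$.} You claim that replacing a bridge $f$ on the $Q_i$--$Q_j$ path by $uv$ yields a graph isomorphic to $H$. But this changes the tree of cliques: if, say, $H$ is a star on four cliques with centre $Q_1$ and leaves $Q_2,Q_3,Q_4$, and you want to add an edge from $Q_2$ to $Q_3$, then removing the bridge $Q_1Q_2$ and inserting a $Q_2Q_3$ bridge turns the star into a path $Q_4\!-\!Q_1\!-\!Q_3\!-\!Q_2$. The resulting graph is not a copy of $H$, so nothing is infected this way. The claim $\langle H\rangle_H = K_{v(H)}$ may in fact be true, but it requires a multi-step argument rather than a single substitution; the paper sidesteps this entirely.

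\medskip
\textbf{The absorption step is too expensive to grow a small clique.} Your absorption lemma adds one new vertex at the cost of $r-1$ fresh random edges, i.e.\ at ``edge density'' $r-1$ per vertex. But $\lambda^*(H) = \tfrac{r-1}{2} + \tfrac{\ell-2}{\ell r} < r-1$, so at $p \approx n^{-1/\lambda^*(H)}$ the expected number of vertices with $r-1$ neighbours in a clique of size $m$ is about $n\, m^{r-1} p^{r-1} = m^{r-1} n^{1-(r-1)/\lambda^*(H)}$, and the exponent of $n$ is strictly negative. Thus when $m$ is polylogarithmic --- which is all your jump-start produces, since only $(\log n)^{O(1)}$ copies of $H-e$ exist at this density --- the clique does not grow at all. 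The paper's growth step is different and more efficient: it attaches entire copies of $K_r$ (or $K_r-e$) to the current clique via a single pendant edge, so each round adds $r$ vertices at the cost of $\binom{r}{2}$ edges, exactly matching $\lambda^*(H)$. Concretely, a disjoint $K_r$ with one edge into the clique plays the role of a \emph{leaf} clique in an $H$-copy whose remaining $\ell-1$ cliques live inside the large clique; this infects the missing bridge, merging the $K_r$ in. For $\ell\ge 3$ the paper finds $n^{\varepsilon}$ such copies of $K_r$ at once and jumps straight to a polynomial-size clique in $O(1)$ rounds; only for $\ell=2$ are $\log n$ sprinkling rounds needed.
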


\begin{proof}[Sketch of proof]
The lower bound again follows by Proposition~\ref{genlower}. For the upper bound, we begin by observing that
$$\lambda^*(H) \, = \, \frac{e(H) - 1}{v(H)} \, = \, \frac{{r \choose 2}}{r} + \frac{\ell - 2}{\ell r},$$
where $v(H) = \ell r$. To see this, simply observe that every tree $T$ has a vertex whose removal leaves no component of side larger than $v(T) / 2$, and remove an edge from the corresponding clique; $\lambda^*(H)$ is certainly at least this large since we may always take $F = H - e$.

Assume first that $\ell \ge 3$, and let $p \gg n^{-1/\lambda^*(H)}$ (we shall prove a stronger result in this case). Note that, as in the previous proof, $H - e \subset G_{n,p}$ for some $e \in E(H)$ with high probability; in fact, there exist at least $v(H)$ copies of $H - e$. Moreover, setting $\eps = \frac{\ell - 2}{\ell r}$, there exist at least $n^\eps$ copies of $K_r$ in $G_{n,p}$. Let $X$ denote the union of those copies of $K_r$ which do not intersect a copy of $H - e$.

From each copy of $H - e$, pick a clique $R$ which is the neighbour of a leaf (in the tree-structure of $H$), and observe that we may infect every edge between $R$ and $X$. We thus obtain a complete bipartite graph, with parts of size $v(H)$ and $n^\eps$. Moreover, each part consists of $r$-cliques, and thus these edges span a clique on the same vertex set.

Finally, sprinkling edges with density $p$, we see that every vertex in a copy of $K_r$ minus an edge, and with a neighbour in $X$, is added to the clique. With high probability there are $n^{2\eps}$ such vertices. Repeating this process $1/\eps$ times, we infect the entire edge set, as required.

For the case $\ell = 2$ we prove the weaker bound in the statement. Let $p$ be as above, and take $\log n$ copies of $G_{n,p}$. By the same proof as above, in the first we span a clique of order $C$, for some large constant $C$; in the second a clique of order $C^2$; in the third $C^3$, and so on. In the first step this is just the union of copies of $K_r$; in later steps it is the union of copies of $K_r$ minus an edge which have a neighbour in the clique formed in the previous step. The proposition now follows.
\end{proof}

We give one final cautionary example, whose purpose is just to point out that  $\lambda(H)$ and $\lambda^*(H)$ are not the only possible values of
$$- \ds\lim_{n \to \infty} \frac{\log n}{\log p_c(n,H)}.$$
Let $DD_r$ denote the `double-dumbbell', the graph consisting of two disjoint copies of $K_r$, plus two extra (disjoint) edges between the two cliques. Note that $\lambda(DD_r) = r/2$ and $\lambda^*(DD_r) = (2{r \choose 2} + 1)/2r$, and therefore
$$\lambda^*(DD_r) \,<\, \frac{{r \choose 2} + 1}{r} \,<\, \lambda(DD_r).$$

\begin{prop}
For every $r \ge 4$,
$$- \ds\lim_{n \to \infty} \frac{\log n}{\log p_c(n,DD_r)} \,=\, \frac{{r \choose 2} + 1}{r}.$$
\end{prop}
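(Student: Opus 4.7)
I would prove $p_c(n, DD_r) = n^{-r/({r \choose 2}+1) + o(1)}$ by matching upper and lower bounds. The critical structure on both sides is the \emph{$K_r$-tree} $T_d$: $d$ vertex-disjoint copies $A_1, \ldots, A_d$ of $K_r$ joined by $d-1$ single ``bridge'' edges arranged as a tree among the cliques, so that $v(T_d) = rd$ and $e(T_d) = d{r \choose 2} + (d-1)$, of density tending to $({r \choose 2}+1)/r$. For the \emph{upper bound} I would first check that $T_d$ percolates on its own vertex set: for each bridge $a_i a_j$ between $A_i$ and $A_j$, the subgraph $A_i \cup A_j \cup \{a_ia_j\}$ is a $DD_r$ minus one cross edge, so a second cross edge becomes infected; iterating infects every cross edge between $A_i$ and $A_j$, so $A_i \cup A_j$ becomes a clique in $\<T_d\>_{DD_r}$, and a walk along the tree — using $DD_r$'s whose two $K_r$'s each contain one vertex of a shared intermediate $A_j$, with the second cross edge supplied by an edge internal to $A_j$ — extends this to a single clique on all of $V(T_d)$. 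Next, $T_d$ is strictly 2-balanced (its densest proper subgraph is a single $K_r$, of density ${r \choose 2}/r < ({r \choose 2}+1)/r - o(1)$), so a second-moment / Chebyshev argument in the spirit of Section~\ref{uppersec} shows that $G_{n,p}$ contains a copy of $T_d$ with $d = \Theta(\log n)$ whp whenever $p \ge n^{-r/({r \choose 2}+1)} (\log n)^{C}$. Finally, sprinkle: once a closure-clique $C$ of size $\ge 2r$ exists, any vertex $v$ with at least $r-1$ neighbours in $C$ is absorbed (use the $DD_r$ with cliques $\{v\} \cup N_C(v)$ and an $r$-clique in $C$ disjoint from $N_C(v)$, and infect the cross edges from $v$ one at a time); since $|C|p \gg \log n$ at our density, a single round of sprinkling finishes whp.

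For the \emph{lower bound} I would adapt the witness-set framework of Section~\ref{lowersec}: for an infected edge $e$, recursively trace back through the $DD_r$'s that completed each infection and collect the initial $G_{n,p}$-edges into a witness set $F(e)$. The key extremal inequality is
\[ e(F) \;\ge\; \frac{{r \choose 2}+1}{r}\,v(F) \;-\; 1, \]
proved by induction on the number $t$ of infection steps. Writing $a,b$ for the number of vertices in which the $t$-th $DD_r$'s two cliques meet the existing witness-set vertex set, and $c \in \{0,1,2\}$ for the number of its cross edges with both endpoints in that set, a direct count gives $\Delta v_t = 2r - a - b$ and $\Delta e_t \ge 2{r \choose 2} + 1 - ({a \choose 2} + {b \choose 2} + c)$, so the required per-step inequality $\Delta e_t \ge \frac{{r \choose 2}+1}{r}\Delta v_t$ reduces to $f(a) + f(b) \ge c+1$, where $f(x) := \frac{{r \choose 2}+1}{r}\,x - {x \choose 2}$. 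Since $f$ is concave on $\{0, 1, \ldots, r\}$ with $f(r) = 1$, and since the $DD_r$ structure forces $a,b \ge 1$ when $c=1$, $a,b \ge 2$ when $c=2$, and $\Delta v_t \ge 1$ excludes the degenerate $a=b=r$ configuration, a short case check verifies the inequality in every case, with equality precisely at $(a,b,c) = (r,0,0)$ — the clique-reuse configuration underlying $T_d$. With the extremal bound in hand, the Markov / Aizenman--Lebowitz argument of Proposition~\ref{prop:lower} then applies essentially verbatim to conclude that for $p \le n^{-r/({r \choose 2}+1)}/(\log n)^{C}$, no edge of $K_n$ is infected whp.

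The \emph{main obstacle} is the extremal lemma. The constant $({r \choose 2}+1)/r$ strictly interpolates $\lambda^*(DD_r) = (2{r \choose 2}+1)/(2r)$ (the cost of a single infection step, from Proposition~\ref{genlower}) and $\lambda(DD_r) = r/2$ (the cost of the naive balanced chain of Section~\ref{uppersec}), and is achieved only by the clique-reuse pattern; identifying this pattern as the unique worst case, and carefully handling multiple $DD_r$'s that share cliques — an analogue of the component-merging bookkeeping in Lemmas~\ref{odd2}--\ref{tech}, though somewhat cleaner — is the delicate technical step.
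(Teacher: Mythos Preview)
Your approach inverts the paper's: for $DD_r$ the paper argues both bounds via the Section~\ref{k4sec} clique-process machinery (iterated growth from a $K_r$ seed for the upper bound; a clique-process penultimate-step induction for the extremal inequality $e(G)\ge\frac{{r\choose 2}+1}{r}n-1$ for the lower), whereas you reach for the Sections~\ref{uppersec}--\ref{lowersec} machinery (second moment for $T_d$; witness sets). Both routes target the same critical structure and should in principle work, but your sketch has two concrete errors. First, the sprinkling step in your upper bound fails: with $d=\Theta(\log n)$ the closure-clique has $|C|=rd=\Theta(\log n)$, so $|C|p = n^{-r/({r\choose 2}+1)+o(1)}\to 0$, not $\gg\log n$; almost no vertex has even one neighbour in $C$, let alone $r-1$. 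You cannot absorb individual vertices from a logarithmic seed at this density. You must instead grow $C$ geometrically over $O(\log n)$ fresh sprinkled rounds, in each round attaching every copy of ``$K_r$ plus a pendant edge into $C$'' --- there are $\Theta(|C|\cdot n^rp^{{r\choose 2}+1})\gg|C|$ of these in expectation, and each merges with $C$ in the closure by exactly your $T_d$ argument. This is precisely the paper's upper bound, and costs only a polylog factor in $p$.

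Second, your per-step inequality $f(a)+f(b)\ge c+1$ fails at $a=b=c=0$: a new $DD_r$ vertex-disjoint from $B_{t-1}$ gives $\Delta e_t=2{r\choose 2}+1<2({r\choose 2}+1)=\frac{{r\choose 2}+1}{r}\Delta v_t$. This configuration genuinely occurs in the Red Edge Algorithm --- the $D^{(j)}$ are listed in infection order, so a $DD_r$ whose non-red edges all lie in $G$ can be disjoint from everything earlier --- and hence $e(B_t)\ge\frac{{r\choose 2}+1}{r}v(B_t)-1$ is simply false for intermediate $t$; it holds only for the final, connected $B_m$. Your ``short case check'' therefore does not cover every case, and the component bookkeeping you defer to the closing remarks is not a refinement but the heart of the argument: you need a statement carrying an $\ell_t$-dependent slack term, as in Lemma~\ref{tech}, together with a separate analysis of the step where components merge (which is where the surplus from the $(a,b)\ne(0,0)$ cases must be spent). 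The paper sidesteps this entirely by working with internally spanned cliques and the clique process, where connectedness is automatic and the penultimate-step induction is clean.
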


\begin{proof}[Sketch of proof]
The key observation is that if $H = DD_r$ and $e \in E(DD_r)$, then $\< H - e \>_H = K_{|H|}$, i.e., a copy of $DD_r$ spans a clique on its vertex set. Moreover, two $(\ge 2r)$-cliques which overlap in two (or more) points span a clique on their union. We shall use these observations, plus the usual `critical droplet' argument from bootstrap percolation on $[n]^d$.

We begin with the (easier) upper bound. Let $n^r p^{{r \choose 2} + 1} \gg \log n$, and consider $m = \log n$ copies $G_1,\ldots,G_m$ of $G_{n,p}$. We claim that their union percolates with high probability. To see this, first observe that $G_{n,p}$ contains an $r$-clique $R_1$ with high probability. Next, note that the expected number of copies of $K_r$ plus a pendant edge, with its endpoint in $R_1$, is at least $|R_1| {{n - |R_1|} \choose r} p^{{r \choose 2} + 1} \gg \log n$. Using Chebyshev's inequality, it follows that there exist at least $\log n$ such copies with high probability, and the closure of these is a clique $R_2$ on at least $\log n$ vertices. Now, simply repeat this procedure for each graph $G_3,\ldots,G_m$. A straightforward calculation shows that, with high probability, at each step the clique $R_j$ (at least) doubles in size, until it reaches size $1/p$. But now a positive fraction of the vertices have $r$ neighbours in $R_{m-2}$, so $|R_{m-1}| \ge \eps n$, and thus $|R_m| = n$ with high probability, as required.

To prove the lower bound, we define a process analogous to the Clique Process in Section~\ref{k4sec}. To be precise, we can break up the process into steps of the following two types: $(a)$ if two $(\ge 2r)$-cliques share two vertices then merge them, and $(b)$ if an edge is infected then consider the copy of $H$ it completes, and merge the $(\ge 2r)$-cliques which provided the edges of $H - e$. To see that this works, recall that  $\< DD_r - e \>_{DD_r} = K_{2r}$.

Using this process, we can easily prove a result analogous to Lemma~\ref{ALlemma}, except with $3$ replaced by $e(H)$. Indeed, at each step the size of the largest clique increases by at most a factor of $e(H)$. Moreover, by considering the penultimate step of the process, as in Lemma~\ref{2l-3}, and using induction, we can prove the following extremal result: If $\< G \>_{DD_r} = K_n$ and $n \ge r$, then
$$e(G) \, \ge \, \bigg( \frac{{r\choose 2} + 1}{r} \bigg)n - 1.$$
The result now follows by a straightforward (and standard) calculation (using Markov), as in the proof of Proposition~\ref{k4:lower}.
\end{proof}

We now turn to some open problems. The ultimate aim of this line of research is to understand the $H$-bootstrap process for every graph $H$; a solution to the following problem would represent a major step in this direction.

\begin{prob}\label{prob:allH}
Determine $\ds\lim_{n \to \infty} \frac{\log p_c(n,H)}{\log n}$ for every graph $H$.
\end{prob}

The next problem is probably less difficult, but would still be very interesting. Recall that by a result of Friedgut~\cite{Fri}, together with Theorem~\ref{k=4}, the event $\< G_{n,p} \>_H = K_n$ has a sharp threshold when $H = K_4$, and a coarse threshold when $H = K_r + e$. 

\begin{prob}
Characterize the graphs $H$ for which the event $\< G_{n,p} \>_H = K_n$ has a sharp threshold.
\end{prob}

Returning to cliques, we would also like to have sharper versions of Theorems~\ref{thm:Kr} and~\ref{k=4}.

\begin{prob}
Determine $p_c(n,K_r)$ up to a constant factor.
\end{prob}

\begin{prob}
Find $1/4 \le \alpha \le 24$, if it exists, such that
$$p_c(n,K_4) \,=\, \big( 1 + o(1) \big) \ds\frac{\alpha}{\sqrt{n \log n}}.$$
\end{prob}

Note that the sharpness of the threshold for $H = K_4$ does not imply the existence of such a constant $\alpha$; it would thus be interesting to show that such a constant exists, even without calculating it. 

Since Problem~\ref{prob:allH} is likely to be hard, we mention two natural families of graphs for which we do not have good bounds on the critical probability $p_c(n,H)$: the complete bipartite graphs, and the random graph.

\begin{prob}
Determine $p_c(n,K_{s,t})$, at least up to a poly-logarithmic factor, for all $s,t \in \N$.
\end{prob}

\begin{prob}
Give bounds on $p_c(n,G_{k,1/2})$ which hold with high probability as $k \to \infty$.
\end{prob}

Finally, we mention a substantial generalization of the problem we have considered in this paper. Given graphs $G$ and $H$, define $H$-bootstrap percolation on $G$ by only allowing edges of $G$ to be infected, and say that a graph $F$ percolates if, starting with $F$, eventually all edges of $G$ are infected. (Or, in other words, replace $K_n$ by $G$.) It seems likely that there are many beautiful theorems to discover about this very general bootstrap process.



\end{document}